\def\Diff{\mathop{\rm Diff}\nolimits}
\def\Id{\mathop{\rm Id}\nolimits}
\def\Ad{\mathop{\rm Ad}\nolimits}
\def\ad{\mathop{\rm ad}\nolimits}
\def\Tr{\mathop{\rm Tr}\nolimits}
\def\Hom{\mathop{\rm Hom}\nolimits}
\def\Cb{{\mathbb C}}
\def\Nb{{\mathbb N}}
\def\Rb{{\mathbb R}}
\def\Ac{{\cal A}}
\def\Fc{{\cal F}}
\def\Hc{{\cal H}}
\def\Jc{{\cal J}}
\def\Ic{{\cal I}}
\def\Uc{{\cal U}}
\def\Vc{{\cal V}}
\def\Kc{{\cal K}}
\def\Cc{{\cal C}}
\def\Dc{{\cal D}}
\def\a{\alpha}
\def\b{\beta}
\def\d{\delta}
\def\D{\Delta}
\def\g{\gamma}
\def\G{\Gamma}
\def\om{\omega}
\def\s{\sigma}
\def\t{\theta}
\def\ve{\varepsilon}
\def\vp{\varphi}
\def\0b{\bf 0}
\def\nb{\nabla}
\def\ot{\otimes}
\def\ra{\rightarrow}
\def\lt{\triangleleft}
\def\acl{\blacktriangleright\hspace{-4pt}\vartriangleleft }
\def\bcl{\vartriangleright\hspace{-4pt} \blacktriangleleft}
\def\btl{\blacktriangleleft}
\def\hd{\overset{\ra}{\partial}}
\def \vd{\uparrow\hspace{-4pt}\partial}
\def\hs{\overset{\ra}{\sigma}}
\def \vs{\uparrow\hspace{-4pt}\sigma}
\def\hta{\overset{\ra}{\tau}}
\def \vta{\uparrow\hspace{-4pt}\tau}
\def\p{\partial}
\def\0D{\Delta^{(0)}}
\def\1D{\Delta^{(1)}}
\def\wg{\wedge}
\def\td{\tilde}
\def\cop{{^{\rm cop}}}
\newcommand{\wbar}[1]{\overline{#1}}
\newcommand{\FD}{\mathfrak{D}}
\newcommand{\Fa}{\mathfrak{a}}
\newcommand{\Fg}{\mathfrak{g}}
\newcommand{\Fh}{\mathfrak{h}}
\newcommand{\Fd}{\mathfrak{d}}
\newcommand{\Fs}{\mathfrak{s}}
\newcommand{\Ft}{\mathfrak{t}}
\newtheorem{theorem}{Theorem}[section]
\newtheorem{remark}[theorem]{Remark}
\newtheorem{proposition}[theorem]{Proposition}
\newtheorem{lemma}[theorem]{Lemma}
\newtheorem{example}[theorem]{Example}
\newtheorem{definition}[theorem]{Definition}
\def\ni{\noindent}
\def\build#1_#2^#3{\mathrel{
\mathop{\kern 0pt#1}\limits_{#2}^{#3}}}
\newcommand{\ps}[1]{~\hspace{-4pt}_{^{(#1)}}}
\newcommand{\ns}[1]{~\hspace{-4pt}_{_{{<#1>}}}}
\newcommand{\nsb}[1]{~\hspace{-4pt}_{^{[#1]}}}
\newcommand{\snb}[1]{~\hspace{-4pt}^{^{{[#1]}}}}
\def\odots{\ot\cdots\ot}
\def\wdots{\wedge\dots\wedge}
\def\one{{\bf 1}}
 \newcommand{\ie}{{\it i.e.\/}\ }
\def\a{\alpha}
\def\b{\beta}
\def\d{\delta}
\def\g{\gamma}
\def\om{\omega}
\def\s{\sigma}
\def\t{\theta}
\def\ve{\varepsilon}
\def\vp{\varphi}
\def\D{\Delta}
\def\G{\Gamma}
\def\nb{\nabla}
\def\ot{\otimes}
\def\part{\partial}
\def\ra{\rightarrow}
\def\text{\hbox}
\def\nb{\nabla}
\def\ot{\otimes}
\def\ra{\rightarrow}
\def\Ad{\mathop{\rm Ad}\nolimits}
\def\Diff{\mathop{\rm Diff}\nolimits}
\def\Hom{\mathop{\rm Hom}\nolimits}
\def\Id{\mathop{\rm Id}\nolimits}
\def\build#1_#2^#3{\mathrel{
\mathop{\kern 0pt#1}\limits_{#2}^{#3}}}
\newcommand{\FBOX}[2]
{\begin{center} \fcolorbox{black}{white}{\parbox{#1 cm}{ #2}}
\end{center}}
\numberwithin{equation}{section}
\begin{document}

\title{\bf  Characteristic classes of foliations via SAYD-twisted cocycles}
\author{
\begin{tabular}{cc}
Bahram Rangipour \thanks{Department of Mathematics  and   Statistics,
     University of New Brunswick, Fredericton, NB, Canada      Email: bahram@unb.ca, \quad and \quad ssutlu@ unb.ca }
     \quad and  \quad  Serkan S\"utl\"u $~^\ast$
      \end{tabular}}

\maketitle

\abstract{\noindent   We have previously shown  that  the truncated
Weil algebra of any Lie algebra is a Hopf-cyclic type complex with
nontrivial coefficients. In this paper we apply this  result  to
transfer   the characteristic classes of  transversely orientable
foliations into the cyclic  cohomology of the groupoid action
algebra. Our result in codimension 1 matches with  the only existing
explicit  computation  done by    Connes-Moscovici. In codimension 2 case,  we carry out a constructive and  explicit computation, by which  we present  the transverse  fundamental class, the Godbillon-Vey class,  and the other four residual  classes as cyclic cocycles on the groupoid action algebra.     The main object in charge  in this new characteristic map
is a SAYD-twisted cyclic cocycle of the same degree as the
codimension. We construct such a  cocycle  by introducing an equivariant Hopf-cyclic cohomology and an equivariant cup product.}

\tableofcontents

\section{Introduction}\label{section-intro}

 Following Connes-Moscovici \cite{ConnMosc98},  let
$\Ac_\G:=C^{\infty}_c(F^+)\rtimes \G$. Here $F^+$ is the oriented
frame bundle over $\Rb^n$, and $\G$ is a  discrete subgroup of
$\Diff^+(\Rb^n)$,  the group of orientation preserving
 diffeomorphisms of $\Rb^n$.

\medskip

\ni For an arbitrary  $\G$, the cyclic cohomology of $\Ac_\G$ is not
known  \cite[Sect. III.2]{Conn-book}. However, the Gelfand-Fuks
cohomology of $\Fa_n$, the Lie algebra of formal vector fields on
$\Rb^n$, is finite dimensional and is embedded  in this cohomology
as a direct summand. In other words, there is a  map, even in  the
level of complexes, which is  a composite of two complicated maps:
\begin{equation}
\xymatrix{ H_{\rm GF}(\Fa_n,\Cb)\ar[rr]^{\Phi\circ\Vc}\ar[rd]^{ \Vc_{\rm van Est}}& & HP(\Ac_\G)\\
& H_\tau(F^+, \Cb)\ar[ru]^{\Phi_{\rm Connes}}.&
 }
\end{equation}
The first map  is  a van Est type  map \cite{ConnMosc98}, which
lands in the twisted cohomology  computed  by the Bott bicomplex
\cite[Prop. III.2.11]{Conn-book}, while the second map is due to
Connes \cite[Thm. III.2.14]{Conn-book}. The representatives of the
Gelfand-Fuks cohomology
 classes in $H(\Fa_n,\Cb)$ are known thanks to the  Vey basis of the
 cohomology of  the truncated Weil algebra \cite{Godb72}. However it
 is difficult to transfer them to the cyclic cohomology of $\Ac_\G$.
  The reader is referred to \cite{ConnMosc} for a complete account of the computation in codimension 1.

\medskip

\ni The Hopf-cyclic cohomology, invented  by Connes-Moscovici
 for computing a local index formula \cite{ConnMosc98}, made it
possible to have another characteristic map
\begin{equation}\label{CM-char}
\chi_\tau: HP(\Hc_n, \Cb_\d)\ra HP(\Ac_\G).
\end{equation}
Here $\Hc_n$ is the Connes-Moscovici Hopf algebra of codimension $n$  and $\Cb_\d$ is
 the canonical one dimensional  SAYD module over $\Hc_n$.  One of the  interesting features  of this
 characteristic map  is its   simple  presentation on the level of
 complexes,
\begin{equation}
\chi_\tau(h_1\odots h_n)(a^0, \ldots, a^n)=\tau\big(a^0
h_1(a^1)\cdots h_n(a^n)\big).
\end{equation}
Here $\tau$ is the canonical trace on $\Ac_\G$ defined by
\begin{equation}\label{aux-canonical-trace}
 \tau(fU^\ast_\psi) = \left\{\begin{array}{cc}
  \displaystyle \int_{F^+}f\varpi,  & \text{if}\quad \psi = \Id, \\
  &\\
  0, & \text{otherwise.}
  \end{array}\right.
\end{equation}
It is also proved  that $HP(\Hc_n, \Cb_\d)$ and $H_{\rm GF}(\Fa_n, \Cb)$ are
 canonically isomorphic, although once again this isomorphism is not
 easy to present \cite{ConnMosc98,MoscRang11}. In view of
 \eqref{CM-char}, the only obstacle to transfer the
  characteristic classes of transversely orientable foliations to the cyclic
 cohomology of $\Ac_\G$ is a basis of the representatives
 of the Hopf-cyclic cohomology classes of  $\Hc_n$.
  There is an intensive ongoing  study
\cite{MoscRang07,MoscRang09,MoscRang11}  to investigate the Hopf-cyclic cohomology of the geometric bicrossed product Hopf algebras
such as $\Hc_n$.   The main idea is
  to use the bicrossed product construction  to find the smallest
complex by which one can compute the cohomology classes. This
complex is found in \cite{MoscRang11} and
 is shown to be the codomain  of  the van Est isomorphism \cite{MoscRang11}. However,
  the return map from that complex to the Hopf-cyclic complex of the
 Hopf algebra  is still missing.

\medskip

\ni In this paper we develop a new characteristic map, whose source
is the Hopf-cyclic cohomology of $\Kc:=U(g\ell_n)$, the enveloping
algebra of the general linear Lie algebra $g\ell_n$.
 The Hopf algebra   $\Kc$ obviously  is not as sophisticated as
$\Hc_n$. Therefore to obtain and transfer the  same classes, by considering the  conservation of work, we would expect
 a characteristic map and a SAYD module more sophisticated
than $\chi_\tau$ and $\Cb_\d$
respectively.

\medskip

   \ni  In fact,  the first step of our mission  was taken in
\cite{RangSutl-II}, where the authors showed that the
      truncated Weil algebra is  a  Hopf-cyclic complex. As a
      result, the characteristic classes of transversely orientable
      foliations can be calculated from $HC(\Kc, V)$. Here
      $V:=S(g\ell_n^\ast)\nsb{2n}$, the algebra of $n$-truncated
      polynomials on $g\ell_n$,
       is a canonical and nontrivial SAYD module over $\Kc$.

\medskip

\ni The other important  piece of this new characteristic map is a SAYD twisted   cyclic $n$-cocycle
 $\vp\in C^n_\Kc(\Ac_\G, V)$.
 Next, we apply  the cup product in Hopf cyclic cohomology introduced in \cite{KhalRang05-II} by  Khalkhali and the  first named author.  In fact, since  $\vp$ is a cyclic cocycle, we use  the explicit
   formula derived in \cite{Rang08}
  to compute    the characteristic classes of foliations as
   cyclic cocycles in $HC(\Ac_\G)$ via
\begin{equation}\label{cup}
\chi_\vp: HC^{\bullet}(\Kc, V)\ra HC^{\bullet+n}(\Ac_\G),\quad
\chi_\vp(x)=x\cup \vp.
\end{equation}

\ni In order to test  our method  we first carry out the computation
for  codimension 1 and observe  that our result matches with the classes
obtained by  Connes-Moscovici in \cite{ConnMosc}.  The result of
\cite{MoscRang11} shows that  the amount of work in codimension 2 is
not comparable with that of  codimension 1.  However, we completely
determine the representatives of all classes in $HC(\Kc,V)$ in
addition to an explicit formula for $\vp\in HC^2_\Kc(\Ac_\G, V)$.
Then \eqref{cup} yields our desired cyclic cocycles in
 $HC(\Ac_\G)$.

\medskip

\ni Throughout the paper, all vector spaces and their tensor
 products are over $\Cb$ unless otherwise is specified. We  use the Sweedler's notation for comultiplication and coaction.
   We denote the comultiplication of a coalgebra $C$ by $\D: C\ra C\ot C$  and its action on $c\in C$ by $\D(c) = c\ps{1} \ot c\ps{2}$. The image of $v\in V$ under  a left coaction $\nabla: V\ra C\ot V$   is denoted by $\nabla(v) = v\ns{-1} \ot v\ns{0}$, summation   suppressed. By the coassociativity, we simply write
$\D(c\ps{1}) \ot c\ps{2} = c\ps{1} \ot \D(c\ps{2}) = c\ps{1} \ot c\ps{2} \ot c\ps{3}.$
Unless stated otherwise, a Lie algebra $\Fg$ is finite dimensional with a basis $\{X_i\,|\,1 \leq i \leq n\}$ and a dual basis $\{\t^i\,|\,1 \leq i \leq n\}$. In particular, for $\Fg = g\ell_n$ we use $\{Y_i^j\,|\,1 \leq i,j \leq n\}$ for a basis and $\{\t^i_j\,|\,1 \leq i,j \leq n\}$ for a dual basis. We denote the Weil algebra of $\Fg$ by $W(\Fg)$, and $W(\Fg)\nsb{2n}$ stands for the $n$-truncated Weil algebra of $\Fg$. We denote the Kronecker symbol by $\d^i_j$.  We also adopt the Einstein summation convention on the repeating indices unless otherwise is stated. Finally, for the sake of  simplicity  we  use
\begin{equation*}
B_{\s(1)}\odots B_{\s(q)} := \sum_{\s \in S_q} {\rm sgn}(\s) B_{\s(1)}\odots B_{\s(q)}
\end{equation*}
for any set of objects $\{B_1,\ldots, B_q\}$. Here $S_q$ is the group of all permutations on $q$ objects and ${\rm sgn}(\s)$ stands for the signature of $\s$.

\section{Preliminaries}\label{section-prelim}
In this section we bring all  material needed  for the sequel
sections.  The definition of  Hopf-cyclic cohomology and a brief
account of the Connes-Moscovici characteristic map is provided in
the first  subsection. The basics of the  cyclic cohomology of Lie
algebras are recalled in the other subsection.

\subsection{Hopf-cyclic cohomology with coefficients}
\ni Let $H$ be a Hopf algebra equipped with a character $\d: H\ra
\Cb$, \ie an algebra map,
 and a group-like element $\s\in H$, \ie $\D(\s)=\s\ot \s$ and $\ve(\s)=1$.
 The pair $(\d,\s)$ is called a modular pair in involution (MPI for short) if
\begin{equation}
\d(\s)=1, \quad \text{and}\quad  S_\d^2=\Ad_\s,
\end{equation}
where $\Ad_\s(h)= \s h\s^{-1}$, for any $h \in H$, and  $S_\d$ is
defined by
\begin{equation}\label{aux-twisted-antipode}
S_\d(h)=\d(h\ps{1})S(h\ps{2}), \quad h \in H.
\end{equation}

\medskip
\ni A vector space
 $M$ is called a right-left
stable-anti-Yetter-Drinfeld module (SAYD for short) over $H$ if it
is a right
 $H$-module, a left $H$-comodule, and
\begin{equation}\label{aux-SAYD-condition}
\nabla(m\cdot h)= S(h\ps{3})m\ns{-1}h\ps{1}\ot m\ns{0}\cdot h\ps{2},\qquad  m\ns{0}\cdot m\ns{-1}=m,
\end{equation}
for any $v\in V$ and $h\in H$. \ni Using $\d$ and $\s$ one endows
$^\s\Cb_\d:=\Cb$, the field of complex numbers, with a  right module
and left comodule structures  over $H$. This way $^\s\Cb_\d$ is a
SAYD module over the Hopf algebra $H$ if and only if $(\d,\s)$ is an
MPI.

\medskip

\ni Now let $M$  be  a right-left SAYD module over  $H$ and $C$ an $H$-module coalgebra, that is, $\D(h(c))=h\ps{1}(c\ps{1}) \ot h\ps{2}(c\ps{2})$ for any $h \in H$ and $c \in C$. Then we
have the graded space
\begin{equation}\label{aux-stand-Hopf-cyclic}
C_H(C,M) := \bigoplus_{q\geq 0} C^q_H(C,M), \quad C^q_H(C,M):= M\ot_H C^{\ot q+1}
\end{equation}
with the coface operators
\begin{align}\label{aux-coface-opt}
&\Fd_i: C^q_H(C,M)\ra C^{q+1}_H(C,M), \quad 0\le i\le q+1\\\notag
&\Fd_i(m\ot_H c^0\odots c^q)= m\ot c^0\odots \D(c^i )\odots c^q, \\\notag
&\Fd_{q+1}(m\ot_H c^0\odots c^q)=\\\notag
& m\ns{0}\ot_H c^0\ps{2} \ot c^1\odots c^q\ot m\ns{-1}(c^0\ps{1}),
\end{align}
the codegeneracy operators
\begin{align}\label{aux-codegeneracy-opt}
&\Fs_j: C^q_H(C,M)\ra C^{q-1}_H(C,M), \quad 0\le j\le q-1 \\\notag
&\Fs_j (m\ot_H c^0\odots h^q)= m\ot_H c^0\odots \ve(c^{j+1})\odots
c^q,
\end{align}
and the cyclic operator
\begin{align}\label{aux-cyclic-opt}
&\Ft_q: C^q_H(C,M)\ra C^q_H(C,M),\\\notag
&\Ft_q(m\ot_H c^0\odots c^q)=m\ns{0}\ot_H c^1\odots c^q\ot m\ns{-1}(c^0).
\end{align}
The graded space $C_H(C,M)$ endowed with the above operators forms
a cocyclic module.
\ni Using the above operators one defines the Hochschild coboundary
$b$ and  the Connes boundary operator $B$,
\begin{align}
&b: C^{q}_H(C,M)\ra C^{q+1}_H(C,M), \qquad
b:=\sum_{i=0}^{q+1}(-1)^i\Fd_i, \\
&B: C^{q}_H(C,M)\ra C^{q-1}_H(C,M), \quad
B:=\left(\sum_{i=0}^{q}(-1)^{qi}\Ft^{i}\right) \Fs_{q-1}\Ft.
\end{align}
The cyclic cohomology of $C$ under the symmetry of $H$
with coefficients in the SAYD module $M$, which is  denoted by
$HC(C,M)$, is defined to be the cyclic cohomology of the complex $C_H(C,M)$.

\medskip

\ni For $C = H$, the map
\begin{align}
\begin{split}
&\Jc: M\ot_H H^{\ot (n+1)}\ra M\ot H^{\ot n},\\
&\Jc(m\ot_H h^0\odots h^n)=mh^0\ps{1}\ot S(h\ps{2})\cdot(h^1\odots
h^n).
\end{split}
\end{align}
identifies the standard Hopf-cyclic complex \eqref{aux-stand-Hopf-cyclic} with
\begin{equation}
C(H,M) := \bigoplus_{q\geq 0} C^q(H,M), \quad C^q(H,M):= M\ot H^{\ot q}.
\end{equation}
Then the coface operators become
\begin{align*}
&\Fd_i: C^q(H,M)\ra C^{q+1}(H,M), \quad 0\le i\le q+1\\
&\Fd_0(m\ot h^1\odots h^q)=m\ot 1\ot h^1\odots h^q,\\\notag
&\Fd_i(m\ot h^1\odots h^q)=\\
& m\ot h^1\odots h^i\ps{1}\ot h^i\ps{2}\odots h^q, \quad 1\le i\le q \\
&\Fd_{q+1}(m\ot h^1\odots h^q)=m\ns{0}\ot h^1\odots h^q\ot m\ns{-1},
\end{align*}
the codegeneracy operators
\begin{align*}
&\Fs_j: C^q(H,M)\ra C^{q-1}(H,M), \quad 0\le j\le q-1 \\
&\Fs_j (m\ot h^1\odots h^q)= m\ot h^1\odots \ve(h^{j+1})\odots
h^q,
\end{align*}
and the cyclic operator becomes
\begin{align*}
&\Ft: C^q(H,M)\ra C^q(H,M),\\
&\Ft(m\ot h^1\odots h^q)=m\ns{0}h^1\ps{1}\ot
S(h^1\ps{2})\cdot(h^2\odots h^q\ot m\ns{-1}).
\end{align*}

\medskip

\ni Let $A$ be a  $H$-module algebra, that is,    a (left) $H$-module
and
\[h(ab)=h\ps{1} (a) h\ps{2}(b),\quad h(1_A)=\ve(h)1_A, \quad \forall h\in H,  a\in A.\]
   One endows $V\ot A^{\ot n+1}$ with the action of $H$
\begin{equation}
(v\ot a^0\odots q^q)\cdot h= mh\ps{1}\ot S(h\ps{q+2})a^0\odots S(h\ps{2})a^q,
\end{equation}
and  forms
\begin{equation}\label{aux-HC(A,V)}
C^n_H(A,V)=\Hom_H(V\ot A^{\ot n+1}, \Cb)
\end{equation}
as the space of  $H$-linear maps.  It is checked in  \cite{HajaKhalRangSomm04-II}
 that for any $v \ot \td{a}:=v \ot a^0 \odots a^{n+2} \in V \ot A^{\ot\,n+2}$ the  morphisms
\begin{align*}
&(\p_i \vp)(v\ot \td{a}) = \vp(v\ot a^0\ot \dots
\ot a^i a^{i+1}\ot\dots \ot a^{n+1}), \quad 0 \leq i <n,\\
&  (\p_{n+1}\vp)(v\ot \td{a}) =\vp(v\ns{0}\ot
(S^{-1}(v\ns{-1})a^{n+1})a^0\ot a^1\ot\dots \ot a^{n}),\\
&(\s_i\vp)(v\ot \td a) = \vp(v\ot a^0 \ot \dots
\ot a^i\ot 1 \ot \dots \ot a^{n-1}), \quad 0\le i\le n-1,\\
&(\tau\vp)(v\ot \td a) = \vp(v\ns{0}\ot (S^{-1}(v\ns{-1})a^n)\ot
a^0\ot \dots \ot a^{n-1})
\end{align*}
define a cocyclic module structure on $C^n_H(A,V)$, whose cyclic
cohomology  is denoted by $HC_H(A,V)$.

\medskip

\ni One uses $HC(H,V)$ and $HC_H(A,V)$ to define a cup product
\begin{align*}
&HC^p_H(A,V)\ot HC^q_H(H,V)\ra HC^{p+q}(A),
\end{align*}
whose definition can be found in \cite{Rang08,KhalRang05-II}.

\medskip
\ni As the simplest example,   one notes that the cup product with the 0-cocycle $\tau \in C^0_H(A,\;
^\s\Cb_\d)$ defines the Connes-Moscovici characteristic map
\cite{ConnMosc98,ConnMosc},
\begin{align}\label{aux-Connes-Moscovici-charac-map}
\begin{split}
& \chi_\tau:HC^\bullet(H, ^\s\Cb_\d) \to HC^\bullet(A)\\
& \chi_\tau(h^1 \odots h^n)(a^0 \odots a^n)=\tau(a^0h^1(a^1)\ldots
h^n(a^n)).
\end{split}
\end{align}

\subsection{ Lie algebra (co)homology}
In this subsection, after recalling the  Lie algebra (co)homology,
we summarize  our work in \cite{RangSutl-II} on the cyclic
cohomology of Lie algebras with coefficients in SAYD modules.

\medskip

\ni Let  $\Fg$ be a Lie algebra and $V$ be a right  $\Fg$-module.
The Lie algebra homology complex is defined to be
\begin{equation}
C(\Fg, V) = \bigoplus_{q\geq0}C_q(\Fg, V), \quad C_q(\Fg, V) := \wg^q \Fg \ot V
\end{equation}
with the Chevalley-Eilenberg boundary map
\begin{align}
&\xymatrix{\cdots \ar[r]^{\p_{\rm CE}\;\;\;\;\;\;\;\;} & C_2(\Fg,V)
\ar[r]^{\p_{\rm CE}\;\;\;} & C_1(\Fg,V)
\ar[r]^{\;\;\;\;\;\;\;\p_{\rm CE}} & V},\\\notag
 & \p_{\rm CE}(X_0 \wdots X_{q-1} \ot v ) = \sum_{i= 0}^{q-1} (-1)^i
X_0 \wdots \widehat{X}_i \wdots X_{q-1} \ot v \cdot X_i + \\\notag
 & \sum_{0 \leq i<j \leq q-1} (-1)^{i+j} [X_i, X_j] \wg X_0 \wdots
\widehat{X}_i \wdots \widehat{X}_j \wdots X_{q-1} \ot v.
\end{align}
The homology of the complex $(C(\Fg, V),\p_{\rm CE})$ is called the
Lie algebra homology of $\Fg$ with coefficients in $V$ and it is
denoted by $H_{\bullet}(\Fg,V)$.  In a dual fashion one defines  the
Lie algebra cohomology complex
\begin{equation}
W(\Fg, V) = \bigoplus_{q\geq0}W^q(\Fg, V),
 \quad W^q(\Fg,V)=\Hom(\wedge^q \Fg ,V),
\end{equation}
where $\Hom(\wedge^q \Fg ,V)$ is the vector space
 of all alternating linear maps on $\Fg^{\ot q}$
 with values in $V$. The Chevalley-Eilenberg coboundary
\begin{equation}
\xymatrix{V\ar[r]^{d_{\rm CE}\;\;\;\;\;\;\;\;}
&W^1(\Fg,V)\ar[r]^{d_{\rm CE}}& W^2(\Fg,V)
\ar[r]^{\;\;\;\;\;\;d_{\rm CE}}&\cdots\;,}
\end{equation}
is defined by
 \begin{align}\label{aux-Chevalley-Eilenberg-coboundary}
 \begin{split}
& d_{\rm CE}(\a)(X_0, \ldots,X_q)=\sum_{0 \leq i<j \leq q}
(-1)^{i+j}\a([X_i,X_j], X_0\ldots \widehat{X}_i, \ldots,
 \widehat{X}_j, \ldots, X_q)+\\
& ~~~~~~~~~~~~~~~~~~~~~~~\sum_{i=0}^q (-1)^{i+1}\a(X_0,\ldots,
\widehat{X}_i,\ldots X_q)\cdot X_i.
\end{split}
 \end{align}
Alternatively, we may identify  $W^q(\Fg,V)$ with $\wg^q\Fg^\ast \ot V$
and the coboundary $d_{\rm CE}$ with
\begin{align}\notag
& d_{\rm CE}(v)= - \t^i \ot v \cdot X_i,\quad  d_{\rm CE}(\b \ot v)=
d_{\rm dR}(\b)\ot v  - \t^i \wg \b \ot v \cdot X_i, \\
& d_{\rm dR}:\wedge^p\Fg^\ast\ra \wedge^{p+1}\Fg^\ast, \quad d_{\rm
dR}(\t^i)=-\frac{1}{2}C^i_{jk}\t^j\wg\t^k
\end{align}
The cohomology of the complex $(W(\Fg,V),d_{\rm CE})$, the Lie
algebra cohomology of $\Fg$ with coefficients in $V,$  is denoted by
$H^\bullet(\Fg,V).$

\medskip
\ni In this paper  we are particularly interested in the SAYD
modules over the universal enveloping algebra $U(\Fg)$ of a Lie
algebra $\Fg$. By our results in \cite{RangSutl-II},  such SAYD
modules are in fact obtained from the SAYD modules over the Lie
algebra $\Fg.$
\begin{definition}[\cite{RangSutl-II}]
A vector space $V$ is a left comodule over the Lie algebra $\Fg$
 if there is a linear map
\begin{equation}
\nabla_{\Fg}: V \ra \Fg \ot V,\qquad \nabla_{\Fg}(v)=v\nsb{-1}\ot v\nsb{0}
\end{equation}
such that
\begin{equation*}
v\nsb{-2}\wg v\nsb{-1}\ot v\nsb{0}=0,
\end{equation*}
where
\begin{equation*}
v\nsb{-2}\ot v\nsb{-1}\ot v\nsb{0}= v\nsb{-1}\ot (v\nsb{0})\nsb{-1}\ot
 (v\nsb{0})\nsb{0}.
\end{equation*}
\end{definition}
\ni It is clear that  left $\Fg$-comodules and  right
$S(\Fg^*)$-modules are identical.
\begin{definition}[\cite{RangSutl-II}]
Let $V$ be a right module and left comodule over a Lie algebra
$\Fg$. We call $V$ a right-left anti-Yetter-Drinfeld module (AYD
module) over $\Fg$ if
\begin{equation}
\nabla_{\Fg}(v \cdot X) = v\nsb{-1} \ot v\nsb{0} \cdot X +
[v\nsb{-1}, X] \ot v\nsb{0}.
\end{equation}
Moreover, $V$ is called stable if
\begin{equation}
v\nsb{0} \cdot v\nsb{-1} = 0.
\end{equation}
Finally, $V$ is said to be unimodular stable if $V_{-\d}$ is stable.
Here the character $\d$ is defined by  $\d = \Tr \circ \ad:\Fg \to
\Cb$ and  $V_{-\d}$ is the  deformation of $V$ via
\[ v\lt X:= v\cdot X-\d(X)v.\]

\end{definition}

\begin{example}\label{example-symm-alg-SAYD}{\rm
 The truncated polynomial algebra $V = S(\Fg^\ast)\nsb{2n}$, of a Lie
algebra $\Fg$, is a unimodular SAYD module over $\Fg$ with the
coadjoint action and the Koszul coaction defined by
\begin{equation}\label{aux-Koszul-ex}
\nabla_{K}: V \to \Fg \ot V, \quad \nabla_{K}( v)= \sum_{i=1}^nX_i \ot
v\t^i.
\end{equation}
}\end{example}

\ni Via the help of (unimodular) SAYD modules  we generalize  Lie
algebra (co)homology complexes. Let us start  with the  Lie algebra
homology  by  introducing  the  complex
\begin{equation}\label{aux-complex-C(g,V)}
C(\Fg,V) = \bigoplus_{i \geq 0}\wg^i\Fg\ot V, \qquad \p = \p_{\rm
CE} + \p_{\rm K}
\end{equation}
with the Chevalley-Eilenberg boundary and the Koszul coboundary
\begin{equation}
\p_{\rm K}:C_n(\Fg,V) \to C_{n+1}(\Fg,V), \qquad \p_{\rm K}(e \ot
v)= v\nsb{-1} \wg e \ot v\nsb{0}.
\end{equation}

\ni  Applying the Poincar\'e duality  in the Lie algebra
cohomology  to the complex \eqref{aux-complex-C(g,V)}, see
\cite[Prop. 4.4]{RangSutl-II}, we obtain
\begin{equation}
W(\Fg,V) = \bigoplus_{i \geq 0}\wg^i\Fg^\ast\ot V
\end{equation}
with $d = d_{\rm CE} + d_{\rm K}$, where $d_{\rm CE}:W^n(\Fg,V) \to W^{n+1}(\Fg,V)$ is the Chevalley-Eilenberg coboundary and
\begin{equation*}
d_{\rm K}:W^n(\Fg,V) \to W^{n-1}(\Fg,V), \qquad d_{\rm K}(\alpha \ot
v)= \iota(v\nsb{-1})(\alpha) \ot v\nsb{0}.
\end{equation*}
Here $\iota(X)$ is the contraction with respect to $X$.

\medskip

 \ni In particular, we recover  the (truncated) Weil algebra
\cite{RangSutl-II}:
\begin{equation}
W(\Fg,S(\Fg^\ast)) = W(\Fg), \qquad W(\Fg,S(\Fg^\ast)\nsb{2n}) = W(\Fg)\nsb{2n}.
\end{equation}


\section{SAYD-twisted cyclic cocycles}
In this section we fix $K$ to be a cocommutative Hopf subalgebra of
a Hopf algebra $H$. Let $A$ be an $H$-module algebra, and  $V$ be a
SAYD module over $K$. We aim to develop a
machinery to produce SAYD-twisted cyclic cocycles in $HC_K(A,V)$. In
the first subsection we introduce equivariant Hopf-cyclic cohomology
$HC_K(H,V,N)$, where $N$ is a SAYD module over $H$. In the second
subsection we construct a cup product $HC^p_K( H, V,N)\ot HC^q_
H(A,N)\ra HC^{p+q}_K(A,V)$. In the third and fourth subsection we
apply the results of the first two subsections. This way we produce
 a nontrivial SAYD-twisted cyclic cocycle  over the groupoid action
algebra under the symmetry of the general linear Lie algebra with
coefficients in the truncated polynomials on this Lie algebra.

\subsection{Equivariant Hopf-cyclic cohomology}
\label{section-equiv-cohom}   For a SAYD module $N$ over $H$ and a
right module-left comodule  $V$ over $K$ we define  the graded space
\begin{align}\label{aux-equivariant-complex}
\begin{split}
& C_K(H, V,N)=\bigoplus_{q \geq 0}C^q_K(H, V,N), \\
& \Cc^q:=C^q_K(H, V,N):=\Hom_K(V, N\ot_H  H^{\ot q+1}).
\end{split}
\end{align}
More precisely,  $\phi\in \Cc^q$ if and only  if  for any $u\in K$
and  any $v\in V$
\begin{equation}\label{aux-equivariancy}
\phi(v\cdot u)= \phi(v)\cdot u,
\end{equation}
where the right  action of $K$ on $N\ot_H H^{\ot q+1}$ is the usual
diagonal action,  \ie
\begin{equation}
(n\ot_H h^0\odots h^{q})\cdot u= n\ot_H h^0 u\ps{1}\odots
h^{q}u\ps{q+1}.
\end{equation}
For $\phi\in C^q_K(H, V,N)$ and $v\in V$,  we use the notation
\begin{equation}\label{aux-notation-phi(m)}
\phi(v)=\phi(v)\snb{-1}\ot_H \phi(v)\snb{0} \odots \phi(v)\snb{q}.
\end{equation}
Let us define the morphisms  $\Fd_i:\Cc^q\ra \Cc^{q +1}$,
$\Fs_j:\Cc^{q}\ra\Cc^{q-1} $, and $\Ft_q:\Cc^q\ra \Cc^q$ as
\begin{align}
\begin{split}
&d_i(\phi)(v)= \Fd_i(\phi(v)), \quad 0\le i\le q\\[.5cm]
&d_{q+1}(\phi)(v)= \Fd_{q+1}(\phi(v\ns{0}))\lt S(v\ns{-1}),\\[.5cm]
&s_j(\phi)(v)=\Fs_j(\phi(v)), \quad 0\le j\le q-1,\\[.5cm]
&t_q(\phi)(v)=\Ft_q(\phi(v\ns{0}))\lt S(v\ns{-1}),
\end{split}
\end{align}
where the right action $\lt$ of $K$ on $N\ot_H H^{\ot q+1}$ is defined
by
\begin{equation}
(n\ot_H h^0\odots h^q) \lt u= n\ot_H h^0\odots h^{q-1}\ot h^q u.
\end{equation}
Here  the morphisms  $(\Fd_i,\Fs_j, \Ft)$ are the usual morphisms of
the cocyclic module  $C_H(H, N)$ defined in
 \eqref{aux-coface-opt},\eqref{aux-codegeneracy-opt} and \eqref{aux-cyclic-opt}.

\begin{theorem}
If $V$  and $N$ are SAYD modules over $K$ and $H$ respectively, then
 the morphisms $d_i,s_j$ and $t$ define  a cocyclic module structure
on $C_K(H, V,N)$.
\end{theorem}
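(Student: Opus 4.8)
\ni The plan is to bootstrap the whole statement from the single fact, recorded above, that $C_H(H,N)=(N\ot_H H^{\ot\bu+1},\Fd_i,\Fs_j,\Ft)$ is a cocyclic module whenever $N$ is a SAYD module over $H$. The operators of $C_K(H,V,N)$ fall into two families: the \emph{untwisted} ones $d_i$ $(0\le i\le q)$ and $s_j$, which act by postcomposition, $d_i(\phi)=\Fd_i\circ\phi$ and $s_j(\phi)=\Fs_j\circ\phi$; and the \emph{twisted} ones $d_{q+1}$ and $t$, which evaluate $\phi$ at $v\ns{0}$, apply $\Fd_{q+1}$ resp. $\Ft$, and then act on the last leg by $S(v\ns{-1})$ via $\lt$. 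The guiding principle is that the SAYD axioms of $V$ over $K$ govern the twist in precisely the way the SAYD axioms of $N$ over $H$ govern the operators $\Fd_i,\Fs_j,\Ft$ of $C_H(H,N)$.

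\medskip
\ni \textbf{Well-definedness.} Every operator must preserve $K$-equivariance \eqref{aux-equivariancy}. First I would prove the lemma that each structure map $\Fd_i,\Fs_j,\Ft$ of $C_H(H,N)$ is $K$-linear for the diagonal action $\cdot$. For the interior faces and degeneracies this is formal; for $\Fd_{q+1}$ and $\Ft$, which relocate the head to the tail, it is exactly here that cocommutativity of $K$ is indispensable, as it lets the legs of the iterated coproduct of $u\in K$ be permuted so that the relocation still commutes with $\cdot$. The untwisted operators are then equivariant with no further work. For $d_{q+1}$ and $t$ I would expand $\phi\big((v\cdot u)\ns{0}\big)\lt S\big((v\cdot u)\ns{-1}\big)$ using the AYD compatibility \eqref{aux-SAYD-condition} for $V$, the equivariance of $\phi$, and cocommutativity of $K$ to trade the last-leg action $\lt$ for the diagonal action $\cdot$; the AYD term is what reconciles the two sides.

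\medskip
\ni \textbf{Cocyclic identities.} The relations involving only $\{d_i\}_{i\le q}$ and $\{s_j\}$ transfer verbatim from \eqref{aux-coface-opt}--\eqref{aux-cyclic-opt} by postcomposition. The relations featuring a twisted operator --- the mixed relations for $\Fd_{q+1}$, together with $t\,d_i=d_{i-1}\,t$ $(1\le i\le q+1)$, $t\,d_0=d_{q+1}$, $t\,s_j=s_{j-1}\,t$ and $t\,s_0=s_q\,t^2$ --- I would check by moving the twist to the outside: writing each side with its $S(v\ns{-1})$-factors collected on the last legs, applying the corresponding identity for $\Fd_i,\Fd_{q+1},\Ft$ inside, and matching the twists by coassociativity of the coaction $v\mapsto v\ns{-1}\ot v\ns{0}$. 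The boundary-crossing relations $t\,d_0=d_{q+1}$ and $t\,s_0=s_q\,t^2$, where a twist is created or annihilated, use the AYD condition in addition to coassociativity.

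\medskip
\ni \textbf{The crux: $t^{q+1}=\Id$.} Iterating $t$ yields $\Ft^{\,q+1}$ applied to $\phi$ evaluated against the $(q+1)$-fold iterated coaction of $V$, carrying one last-leg twist per rotation. The plan is to show that across the $q+1$ rotations each leg absorbs exactly one twist, so that --- by coassociativity and cocommutativity of $K$ --- the accumulated twist assembles into a single diagonal action by $S(v\ns{-1})$; then $\Ft^{\,q+1}=\Id$ on $C_H(H,N)$ collapses the underlying term, and the equivariance \eqref{aux-equivariancy} of $\phi$ turns the result into $\phi\big(v\ns{0}\lt S(v\ns{-1})\big)$, which equals $\phi(v)$ by the stability of $V$ (used in the equivalent antipodal form $v\ns{0}\lt S(v\ns{-1})=v$, which agrees with $v\ns{0}\lt v\ns{-1}=v$ on any AYD module). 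This is the main obstacle: it is the sole place where the module, comodule (AYD), and stability axioms of $V$ must be used simultaneously and interlocked with the cyclicity of the base complex. The degree-$0$ case $t_0=\Id$ --- which reduces, after $\Ft_0=\Id$ (itself the stability of $N$), exactly to $v\ns{0}\lt S(v\ns{-1})=v$ --- is the model for the general computation.
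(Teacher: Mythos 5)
Your proposal is correct and follows essentially the same route as the paper's proof: cocommutativity of $K$ together with the AYD condition on $V$ gives well-definedness of the twisted operators $d_{q+1}$ and $t$, the untwisted simplicial relations transfer by postcomposition from $C_H(H,N)$, and $t_q^{q+1}=\Id$ is obtained exactly as in the paper by letting each rotation deposit one twist per leg, reassembling the accumulated twists (via coassociativity and cocommutativity) into the diagonal action of $S(v\ns{-1})$, and then concluding by equivariance of $\phi$ and the SAYD identity $v\ns{0}\cdot S^{-1}(v\ns{-1})=v$ with $S=S^{-1}$. The only blemish is the index slip $t\, s_0 = s_q\, t^2$, which should read $t_q s_0 = s_{q-1} t_q^2$; it does not affect the argument.
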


\begin{proof}
Let us prove that the morphisms $d_i, s_j$, and  $t$  are
well-defined. Indeed, it suffices to check
  that $t$, $d_0$, and $s_n$ are well-defined as the other morphisms are made
   of these three.  For $d_0 $ and $s_n$ the task is obvious as
$\D:H\ra H\ot H$ and $\ve:H\ra \Cb$ are multiplicative respectively.
Let us check that $t$ is well-defined. We have
 \begin{align}
 \begin{split}
 &t(\phi)(v\cdot y)=\tau(\phi((v\cdot y) \ns{0}))\lt S((v\cdot y)\ns{-1})\\
 &= \tau(\phi(v\ns{0}\cdot y\ps{2}) )\lt S(( S(y\ps{1})v\ns{-1}y\ps{3})\\
 &= \tau(\phi(v\ns{0})\cdot y\ps{2} )\lt S(y\ps{3}) S(v\ns{-1})y\ps{1}\\
  &=\tau(\phi(v)\snb{-1}\ot_H \phi(v)\snb{0} y\ps{2} \odots \phi(v)\snb{q} y\ps{q+2} )\lt S(y\ps{q+3}) S(v\ns{-1})y\ps{1}\\
  &= \phi(v)\snb{-1}\ns{0}\ot_H \phi(v)\snb{1} y\ps{3} \ot\dots\\
  &\dots \ot \phi(v)\snb{q} y\ps{q+2}\ot \phi(v)\snb{-1}\ns{-1} \phi(v)\snb{0} y\ps{2} S(y\ps{q+3}) S(v\ns{-1})y\ps{1}\\
  &= \phi(v)\snb{-1}\ns{0}\ot_H \phi(v)\snb{1} y\ps{1} \ot\dots\\
  &\dots\ot \phi(v)\snb{q} y\ps{q}\ot \phi(v)\snb{-1}\ns{-1} \phi(v)\snb{0}  S(v\ns{-1})y\ps{q+1}\\
  &=t(\phi(v))\cdot y.
\end{split}
 \end{align}
\ni In the second and the sixth equalities we use the fact that  $K$
is cocommutative.

\medskip

\ni Let us next show that $C_K(H,V,N)$ is a cocyclic module which means that $d_i,$ $s_j$ and $t$ satisfy
\begin{equation}\label{aux-disj-order}
d_j  d_i = d_i  d_{j-1}, \, \, i < j  , \qquad s_j s_i = s_i
s_{j+1},  \, \,  i \leq j
\end{equation}
\begin{equation}\label{aux-disj-order-II}
s_j  d_i = \left\{ \begin{matrix} d_i  s_{j-1} \hfill &i < j
\hfill \cr \Id_q \hfill &\hbox{if} \ i=j \ \hbox{or} \ i = j+1 \cr
d_{i-1}  s_j \hfill &i > j+1  ;  \hfill \cr
\end{matrix} \right.
\end{equation}

\begin{eqnarray}\label{aux-t-d-order}
t_{q+1}  d_i  = d_{i-1}  t_q ,
 \quad && 1 \leq i \leq q+1 ,  \quad t_{q+1}  d_0 =
d_{q+1} \\ \label{cj} t_{q-1}  s_i = s_{i-1} t_q , \quad &&
1 \leq i \leq q-1 , \quad t_q  s_0 = s_{q-1}  t_q^2 \\
\label{ce} t_q^{q+1} &=& \Id_q  \, .
\end{eqnarray}
The equalities \eqref{aux-disj-order}, \eqref{aux-disj-order-II} and \eqref{cj} follow directly from their counterparts for the operators $\Fd_i, \Fs_j$ and $\Ft$.

\medskip

\ni As for \eqref{aux-t-d-order}, we check the case $i = q+1$.
Indeed
\begin{align}
\begin{split}
& t_{q+1} (d_{q+1}(\phi))(v) = \Ft_{q+1}(d_{q+1}(\phi)(v\ns{0}))\lt S((v\ns{-1}))\\
&= \Ft_{q+1}(\Fd_{q+1}(\phi(v\ns{0}\ns{0}))\lt S(v\ns{0}\ns{-1}))\lt S(v\ns{-1})\\
&= \Ft_{q+1}(\Fd_{q+1}(\phi(v\ns{0}))\lt S(v\ns{-1}))\lt S(v\ns{-2})\\
& = \Ft_{q+1}\left(\phi(v\ns{0})\snb{-1}\ns{0}\ot_H \phi(v\ns{0})\snb{0}\ps{2} \ot \phi(v\ns{0})\snb{1} \odots\right. \\
& \left. \phi(v\ns{0})\snb{q} \ot  \phi(v\ns{0})\snb{-1}\ns{-1}\phi(v\ns{0})\snb{0}\ps{1}S(v\ns{-1})\right)\lt S(v\ns{-2})\\
&= \phi(v\ns{0})\snb{-1}\ns{0}\ot_H \phi(v\ns{0})\snb{1} \odots \phi(v\ns{0})\snb{q} \ot \\
& \phi(v\ns{0})\snb{-1}\ns{-2}\phi(v\ns{0})\snb{0}\ps{1}S(v\ns{-1}) \ot \\
& \phi(v\ns{0})\snb{-1}\ns{-1}\phi(v\ns{0})\snb{0}\ps{2}S(v\ns{-2}) \\
&= \phi(v\ns{0})\snb{-1}\ns{0}\ot_H \phi(v\ns{0})\snb{1} \odots \phi(v\ns{0})\snb{q} \ot \\
& \D\left(\phi(v\ns{0})\snb{-1}\ns{-1}\phi(v\ns{0})\snb{0}S(v\ns{-1})\right) \\
& = \Fd_q\left(\left[\phi(v\ns{0})\snb{-1}\ns{0}\ot_H \phi(v\ns{0})\snb{1} \odots \phi(v\ns{0})\snb{q} \ot \right.\right.\\
& \left.\left.\phi(v\ns{0})\snb{-1}\ns{-1}\phi(v\ns{0})\snb{0}\right]\lt S(v\ns{-1})\right)\\
& = \Fd_q(\Ft_q(\phi(v\ns{0}))\lt S(v\ns{-1})) = d_q(t_q(\phi))(v).
\end{split}
\end{align}
A simple calculation yields one to
\begin{align*}
& t_q^{q+1}(\phi)(v)\\
&= \phi(v\ns{0})\snb{-1}\ns{0}\ot_H \phi(v\ns{0})\snb{-1}\ns{-q-1}
\phi(v\ns{0})\snb{0}S(v\ns{-1})\ot\dots\\
& \dots\ot
\phi(v\ns{0})\snb{-1}\ns{-1}\phi(v\ns{0})\snb{q}S(v\ns{-q-1})\ot\dots\\
&=\phi(v\ns{0})\snb{-1}\ns{0} \phi(v\ns{0})\snb{-1}\ns{-1}\ot_H \phi(v\ns{0})\snb{0}S(v\ns{-1})\\
& \dots\ot \phi(v\ns{0})\snb{q}S(v\ns{-q-1})\\
&=\phi(v\ns{0})\snb{-1}\ot_H \phi(v\ns{0})\snb{0}S(v\ns{-1})
 \odots \phi(v\ns{0})\snb{q}S(v\ns{-q-1})\\
&=\phi(v\ns{0})\cdot S(v\ns{-1})= \phi(v\ns{0}\cdot S(v\ns{-1}))=\phi(v)\\
\end{align*}
The last equality is held because of the fact that for any SAYD
module $V$ and any $v \in V$, $v\ns{0}\cdot S^{-1}(v\ns{-1})=v$, see \cite[Lemma
4.9]{KhalRang03}. Here Since $K$ is cocommutative $S=S^{-1}$.
 \end{proof}

\ni The cyclic cohomology of $C_K(H,V,N)$ is denoted by
$HC_K(H,V,N)$.

\medskip

\ni One notes that by taking $K=\Cb$ and $V=\Cb$ one recovers the
usual Hopf-cyclic cohomology $HC(H,N).$

\subsection{Equivariant characteristic map}
\label{subsection-eq-char} Let $V$ and $N$ be  SAYD modules over $K$
and $ H$ respectively. We define the map
  \begin{align}
 &\Psi: C^q_K( H,V,N)\ot C^q_ H(A,N)\longrightarrow  C^q_K(A,
V),\\\notag
 &\Psi(\phi\ot\psi)(v\ot a_0\odots a_q)\\
 &= \psi\big( \phi(v)\snb{-1}\ot  \phi(v)\snb{0}(a_0)\ot \phi(v)\snb{1}( a_1) \odots \phi(v)\snb{q}( a_q)\big).
  \end{align}
One may check that $\Psi$ is a map of cocyclic modules,  where  on the left hand side we consider the product of two cocyclic modules. This is enough  to produce  a generalization of the  cup product in Hopf-cyclic cohomology \cite{KhalRang05-II,Rang08}.

\medskip

\ni We define a new bicocyclic module by
tensoring the  cocyclic modules \eqref{aux-cyclic-opt}, and \eqref{aux-HC(A,V)}. The new bigraded  module
 in the bidegree  $(p,q)$ is defined by
\begin{equation}\label{c**}\Cc^{p,q}:=\Hom_K(V, N\ot_H H^{\ot p+1})\ot \Hom_H(A^{\ot q+1},N)\end{equation} with horizontal structure
$\hd_i=\Fd_i\ot \Id$, $\hs_j=\Fs\ot \Id$, and $\hta=\Ft\ot \Id$ and
vertical structure $\vd_i= \Id\ot\p_i$, $\vs_j=\Id\ot \s_j$, and
$\vta=\Id\ot \tau$. Obviously $(\Cc^{\bullet,\bullet},\hd,\hs,\hta,\vd,\vs,\vta)$ defines a bicocyclic module.

\medskip

\ni Now let us define  the  map
\begin{align}\label{acpsi}
&\Psi:\Dc^q\ra \Hom_K(V\ot A^{\ot q+1},\Cb),\\\notag
&\Psi(\phi\ot\psi)(v\ot a_0\odots a_q)\\\notag
 &= \psi\big( \phi(v)\snb{-1}\ot  \phi(v)\snb{0}(a_0)\ot \phi(v)\snb{1}( a_1) \odots \phi(v)\snb{q}( a_q)\big).
\end{align}
Here $\Dc^{\bullet}$ denotes the diagonal complex of the
bicocyclic module  $\Cc^{\bullet,\bullet}$. It is a cocyclic
module whose $q$th component is $\Cc^{q,q}$ and its cocyclic structure
morphisms are  $\p_i:=\hd_i\circ\vd_i$, $\s_j:=\hs_j\circ\vs_j$, and
$\tau:=\hta\circ\vta$.

\begin{proposition} \label{cyclic-map}
The map $\Psi$ is  a well-defined map of cocyclic modules.
\end{proposition}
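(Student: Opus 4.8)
\ni The statement asks for two things: that $\Psi(\phi\ot\psi)$ is a well-defined element of $C^q_K(A,V)=\Hom_K(V\ot A^{\ot q+1},\Cb)$, and that $\Psi$ intertwines the diagonal cocyclic structure $\p_i=\hd_i\circ\vd_i$, $\s_j=\hs_j\circ\vs_j$, $\tau=\hta\circ\vta$ of $\Dc^\bullet$ with the cocyclic structure of $C^q_K(A,V)$ recorded in \eqref{aux-HC(A,V)}. The plan is to dispose of well-definedness first, then to check the structure maps one family at a time, postponing the cyclic operator to the end. For well-definedness I would first verify that the defining formula descends over the balanced tensor $N\ot_H H^{\ot q+1}$: moving an element $h\in H$ from the $N$-leg $\phi(v)\snb{-1}$ onto the $H$-legs via the diagonal action $\phi(v)\snb{i}\mapsto h\ps{i+1}\phi(v)\snb{i}$ must not change the value of $\psi$, and this is precisely the $H$-linearity of $\psi$ once the $H$-legs are fed into $A$ through the module-algebra action. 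Equivariance in the $V$-slot then follows from the $K$-equivariance \eqref{aux-equivariancy} of $\phi$, the inclusion $K\sbsq H$, and the fact that $A$ is an $H$-module algebra, hence a $K$-module algebra.

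\ni For the compatibility with the structure maps I would exploit that the diagonal operators factor into a horizontal piece acting only on the $\phi$-leg and a vertical piece acting only on the $\psi$-leg. For the inner cofaces $\p_i=\hd_i\circ\vd_i$ with $0\le i<q$, the horizontal $\hd_i=\Fd_i\ot\Id$ replaces $\phi(v)\snb{i}$ by $\phi(v)\snb{i}\ps{1}\ot\phi(v)\snb{i}\ps{2}$ while the vertical $\vd_i=\Id\ot\p_i$ multiplies the adjacent algebra variables $a_i a_{i+1}$; after feeding the two resulting $H$-legs into $A$ they recombine through the module-algebra identity $h(ab)=h\ps{1}(a)h\ps{2}(b)$ into $\phi(v)\snb{i}(a_i a_{i+1})$, which is exactly the $i$-th coface of $C^q_K(A,V)$. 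The outer coface is immediate since $\hd_0=\Fd_0\ot\Id$ inserts a unit $1\in H$, which acts as the identity on $A$. The codegeneracies match in the same spirit: $\hs_j$ applies $\ve$ to $\phi(v)\snb{j+1}$ and $\vs_j$ inserts $1_A$, reconciled by $h(1_A)=\ve(h)1_A$. I expect to write out only one representative computation in each family, the others being identical.

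\ni The genuinely delicate step, and the one I expect to be the main obstacle, is the cyclic operator $\tau=\hta\circ\vta$ together with the twisted last coface $\p_{q+1}=\hd_{q+1}\circ\vd_{q+1}$. On the target side the cyclic operator of $C^q_K(A,V)$ cyclically shifts the algebra variables and, crucially, introduces the factor $S^{-1}(v\ns{-1})$ acting on the variable that wraps around, as recorded in \eqref{aux-HC(A,V)}. On the source side $\hta=\Ft\ot\Id$ is the Hopf-cyclic cyclic operator, which moves the $N$-comodule leg $\phi(v)\snb{-1}\ns{-1}$ to the far end and twists it by $S$, while the diagonal operator additionally carries the twist $\lt S(v\ns{-1})$ built into $t_q$ from the $V$-coaction. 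The heart of the argument is to show that, once $\psi$ is applied, these two sources of twisting collapse onto the single factor $S^{-1}(v\ns{-1})$ demanded by the target. I would handle this with exactly the devices used in the proof of the preceding theorem: the cocommutativity of $K$ (so that $S=S^{-1}$ on the $K$-legs), the stability identity $v\ns{0}\cdot S^{-1}(v\ns{-1})=v$, and the AYD compatibility between the action and the coaction. Keeping the placement of the antipode and the order of the comodule legs straight across the balanced tensor $\ot_H$ is the only real bookkeeping difficulty.

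\ni Once $\Psi$ is shown to commute with every generator $\p_i,\s_j,\tau$, it is by definition a morphism of cocyclic modules, and hence induces the cup product \eqref{cup} on cyclic cohomology, which is the content of the statement.
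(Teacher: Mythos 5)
Your proposal is correct and follows essentially the paper's own route: the paper likewise first verifies that $\Psi(\phi\ot\psi)$ is $K$-invariant using the $K$-linearity of $\phi$ and the $H$-module-algebra property, then reduces the cocyclic compatibility to generators (zeroth coface, last codegeneracy, cyclic operator, the rest left to the reader) and carries out only the cyclic-operator computation, in which the $N$-comodule legs cancel through the antipode and the surviving twist $S(v\ns{-1})$ equals the required $S^{-1}(v\ns{-1})$ by cocommutativity of $K$ --- exactly the collapse you anticipate. Two small remarks: your descent-over-$\ot_H$ observation (via the $H$-linearity of $\psi$) is a worthwhile point the paper leaves implicit, whereas your claim that $\hd_0$ ``inserts a unit'' confuses the reduced presentation $N\ot H^{\ot q}$ with the presentation actually used, $N\ot_H H^{\ot q+1}$, in which $\Fd_0$ comultiplies the zeroth leg; this is harmless, since that case is already covered by your inner-coface recombination argument.
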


\begin{proof}
Let us  first show that $\Psi$ is well-defined. Indeed, by using the
fact that  $\phi$ is  $K$-linear,  we see
\begin{align*}
&\Psi(\phi\ot \psi)\big(vk\ps{1} \ot  S(k\ps{q+2})(a^0)\odots S(k\ps{2})(a^{q})\big)\\
&=\psi( \phi(vk\ps{1})\snb{-1}\ot  \phi(vk\ps{1})\snb{0}S(k\ps{q+2})(a^0)\odots      \phi(vk\ps{1})\snb{q} S(k\ps{2})(a^{q}))\\
&= \psi( \phi(v)\snb{-1}\ot  \phi(v)\snb{0}k\ps{1}S(k\ps{2q+2})(a^0)\odots      \phi(v)\snb{q}k\ps{q+1} S(k\ps{q+2})(a^{q}))\\
&= \ve(k)\psi( \phi(v)\snb{-1}\ot  \phi(v)\snb{0}(a^0)\odots      \phi(v)\snb{q}(a^{q}))\\
&=\ve(k)\Psi(\phi\ot \psi)\big(v \ot  a^0\odots a^{q}\big).
\end{align*}
 Next, we show that $\Psi$  commutes with the  cocyclic structure
 morphisms. To this end,  we need only to show the commutativity of
 $\Psi$ with zeroth coface, the last codegeneracy and the cyclic
 operator, because these  operators generate all cocyclic structure
 morphisms. We check it only  for the cyclic operators and leave the
 rest to the reader.
\begin{align*}
&\tau \Big(\Psi(\phi\ot \psi)\Big)\big(v \ot a^0\odots a^{q}\big)\\
&= \Psi(\phi\ot \psi)\big(v\ns{0} \ot  S^{-1}(v\ns{-1})(a^q)\ot a^0\odots a^{q-1}\big)\\
&=\psi(\phi(v\ns{0})\snb{-1}\ot \phi(v\ns{0})\snb{0}S^{-1}(v\ns{-1})(a^q)\ot \phi(v\ns{0})\snb{1}(a^1)\ot\dots\\
&\dots\ot \phi(v\ns{0})\snb{q}(a^q).
\end{align*}
On the other hand we have
\begin{align*}
&\Psi(\Ft\phi\ot \tau\psi)\big(v \ot a^0\odots a^{q}\big)\\
&= \tau\psi( \Ft\phi(v)\snb{-1}\ot  \Ft\phi(v)\snb{0}(a^0)\odots \Ft\phi(v)\snb{q}(a^q))\\
&=\tau\psi( \Ft\phi(v)\snb{-1}\ot  \Ft\phi(v)\snb{0}(a^0)\odots \Ft\phi(v)\snb{q}(a^q))\\
&=\tau\psi( \phi(v\ns{0})\snb{-1}\ns{0}\ot  \phi(v)\snb{1}(a^0)\ot\dots\\
 &~~~~~~~\dots\ot \phi(v)\snb{q}(a^{q-1})\ot \phi(v\ns{0})\snb{-1}\ns{-1}\phi(v\ns{0})\snb{0} S(v\ns{-1})(a^q))\\
 & =\psi\big( \phi(v\ns{0})\snb{-1}\ns{0}\ot  S^{-1}(\phi(v\ns{0})\snb{-1}\ns{-1})\phi(v\ns{0})\snb{-1}\ns{-2}\phi(v\ns{0})\snb{0} S(v\ns{-1})(a^q)\ot \\
 &  \phi(v)\snb{1}(a^0)\odots \phi(v)\snb{q}(a^{q-1})\big)\\
 &=\psi(\phi(v\ns{0})\snb{-1}\ot \phi(v\ns{0})\snb{0}
 S(v\ns{-1})(a^q)\ot \phi(v\ns{0})\snb{1}(a^1)\odots \phi(v\ns{0})\snb{q}(a^q).
\end{align*}
Since $K$ is cocommutative $S^2=\Id$, and hence $$\tau\big(\Psi(\phi\ot \psi)\big)=\Psi(\Ft\phi\ot \tau\psi). $$
\end{proof}

\begin{theorem}
Assume that  $K$ is a cocommutative Hopf subalgebra of  a Hopf
algebra $ H$,   $\Ac$  is a $ H$-module algebra,  and  $V$ and $N$
are SAYD modules over $K$ and $ H$ respectively.  Then  the map
$\Psi$ defines a cup product
\begin{equation}\label{aux-equ-cup}
HC^p_K( H, V,N)\ot HC^q_ H(A,N)\ra HC^{p+q}_K(A,V).
\end{equation}
\end{theorem}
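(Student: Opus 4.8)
The plan is to obtain the cup product as a composite of two maps: a general product carrying the tensor product of the two cyclic cohomologies into the cyclic cohomology of the diagonal $\Dc^\bullet$ of the bicocyclic module $\Cc^{\bullet,\bullet}$ of \eqref{c**}, followed by the map induced by $\Psi$. The essential content, namely that $\Psi$ intertwines the cocyclic structures, is already secured by Proposition \ref{cyclic-map}; what remains is to assemble the standard machinery and check that the bookkeeping matches.

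First I would record that, by its very construction, $\Cc^{\bullet,\bullet}$ is the objectwise tensor product of the equivariant cochain module $C_K(H,V,N)$ of Section \ref{section-equiv-cohom}, whose cyclic cohomology is $HC_K(H,V,N)$, with the cochain module $\bigoplus_q \Hom_H(A^{\ot q+1},N)$ computing $HC_H(A,N)$. The horizontal datum $(\hd_i,\hs_j,\hta)$ comes from the first factor and the vertical datum $(\vd_i,\vs_j,\vta)$ from the second, so that $\Cc^{\bullet,\bullet}$ is a genuine bicocyclic module and its diagonal $\Dc^\bullet$, with $\Dc^q=\Cc^{q,q}$ and structure maps $\p_i=\hd_i\circ\vd_i$, $\s_j=\hs_j\circ\vs_j$ and $\tau=\hta\circ\vta$, is a cocyclic module.

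Next I would invoke the generalized cyclic Eilenberg--Zilber theorem, in the form used to build cup products in Hopf-cyclic cohomology in \cite{KhalRang05-II,Rang08}. For the tensor product of two cocyclic modules this produces, through the Alexander--Whitney and cyclic shuffle maps on the associated $(b,B)$-bicomplexes, a natural product
\begin{equation*}
HC^p_K(H,V,N)\ot HC^q_H(A,N)\longrightarrow HC^{p+q}(\Dc^\bullet),
\end{equation*}
since $HC^\bullet(\Dc^\bullet)$ is by definition the cyclic cohomology of the diagonal of $\Cc^{\bullet,\bullet}$. Finally, Proposition \ref{cyclic-map} shows that $\Psi:\Dc^\bullet\ra C_K(A,V)$ is a morphism of cocyclic modules, hence induces $\Psi_\ast:HC^{p+q}(\Dc^\bullet)\ra HC^{p+q}_K(A,V)$; composing the two arrows delivers the cup product \eqref{aux-equ-cup}.

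The step I expect to demand the most care---though it is bookkeeping rather than a genuine obstacle---is to confirm that the variances and bidegree shifts of the general product land precisely in the cohomology of the diagonal where $\Psi$ is defined, and that the diagonal operators $\p_i,\s_j,\tau$ used in Proposition \ref{cyclic-map} coincide with those coming from the Eilenberg--Zilber construction. Because the substantive compatibilities with the SAYD coactions of $V$ and $N$ and with the cocommutativity of $K$ have already been absorbed into the well-definedness of $\Psi$ and its commutation with $\p_i,\s_j$ and $\tau$, the general theorem applies to the bona fide bicocyclic module $\Cc^{\bullet,\bullet}$ verbatim, and no further hard computation is expected.
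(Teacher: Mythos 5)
Your proposal is correct and follows essentially the same route as the paper's own proof: the paper also tensors (cyclic-cocycle) representatives to get a $(b,B)$-cocycle in the total complex of $\Cc^{\bullet,\bullet}$, transfers it to the diagonal $\Dc^\bullet$ via the Alexander--Whitney map of the cyclic Eilenberg--Zilber theorem, and then pushes forward along $\Psi$ using Proposition \ref{cyclic-map}. The only cosmetic difference is that the paper works with explicit representatives while you phrase the first step as a natural product landing in $HC^{p+q}(\Dc^\bullet)$; the underlying mechanism is identical.
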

\begin{proof}
Let $[\phi]\in HC^p_K(H,V,N)$ and $[\psi]\in HC^q_H(A,N)$. Without
loss of generality we assume that  $\phi$ and $\psi$  are
respectively cyclic cocycles horizontally and  vertically. This
implies
 that $\phi\ot \psi$ is a $(b,B)$ cocycle of degree $p+q$ in total
  compex of  $\Cc^{\bullet,\bullet}$. On the other hand by the cyclic
  Eilenberg-Zilber theorem \cite{KhalRang04}, the total
  complex of $\Cc^{\bullet,\bullet}$ is quasi-isomorphic
   with $\Dc^{\bullet}$  via the Alaxander-Witney map $AW$.
     So, $AW(\phi\ot \psi)$ is a $(b, B)$ cocycle
     in $\Dc^{\bullet}$. Since $\Psi$ is cyclic, we conclude that
     $\Psi(AW(\phi\ot \psi))$ defines a
 class in $HC^{p+q}_K(A, V)$.
\end{proof}
\ni One notes that by setting $K:=\Cb$ as the trivial  Hopf subalgebra
and $M=\Cb$ as the trivial SAYD module over $K$  the above cup
product becomes  the cup product defined in
\cite{KhalRang05-II,Rang08}.

\subsection{Equivariant charactrestic map for
$\Hc_n$}\label{subsection-H-n} In this subsection we apply our
equivariant characteristic map we built  in  Subsection
\ref{subsection-eq-char} to produce the desired  cyclic cocycle on
the groupoid action algebra.

\medskip

 \ni Let us first  recall the Connes-Moscovici Hopf algebra
$\Hc_n$ from \cite{ConnMosc98,ConnMosc}. To this end let $\Fh_n$ be
the Lie algebra generated by
\begin{equation} \label{gens}
\{X_k , \, Y_i^j , \, \d_{jk \,  \ell_1  \ldots \ell_r}^i \, ;  \,
i, j, k, \ell_1  \ldots \ell_r  =1, \ldots , n , \, r \in \Nb\}
\end{equation}
with relations
\begin{align}\label{aux-relations-CM}
\begin{split}
& [Y_i^j , Y_k^{\ell}] = \d_k^j Y_i^{\ell} - \d_i^{\ell} Y_k^j, \quad  [Y_i^j , X_k]=
\d_k^j X_i, \quad [X_k ,
X_{\ell}] = 0,\\
& \d_{jk   \ell_1 \ldots \ell_r}^i = [X_{\ell_r} , \ldots
[X_{\ell_1} , \d_{jk}^i] \ldots ],\quad [\d_{jk  \ell_1 \ldots \ell_r}^i , \, \d_{j'k'   {\ell}'_1
\ldots  {\ell}'
_r}^{i'}]= 0,\\
&  [Y_p^q , \d_{j_1 j_2 \,  j_3 \ldots j_r}^i ]  = \sum_{s=1}^r \, \d^q_{j_s} \,
 \d^i_{j_1  j_2 \,  j_3  \ldots j_{s-1} p j_{s+1} \ldots j_r}
  - \d_p^i \, \d_{j_1 j_2 \,  j_3  \ldots j_r}^q, \\
  & \d_{ jk\ell_1 \,   \ldots \ell_r}^i=  \d_{jk\ell_{\pi(1)} \ldots
  \ell_{\pi(r)}}^i, \quad \forall \pi\in S_r.
\end{split}
\end{align}
As an algebra, $\Hc_n$ is $U(\Fh_n)$ modulo the (Bianchi-type) identities
\begin{equation}
 \d_{j \ell   k}^i -  \d_{j k   \ell}^i =  \d_{j k}^s \, \d_{s \ell}^i  -  \d_{j \ell}^s \, \d_{s k}^i .
\end{equation}
The coalgebra structure of $\Hc_n$ is defined by a Leibniz  rule
that makes the  groupoid action algebra $\Ac=
C^{\infty}_c(F^+)\rtimes \G$ an $\Hc_n$-module algebra.

\medskip

\ni In order to describe the action of $\Hc_n$ explicitly, let us
first identify $F^+$ with $\Rb^n\rtimes {\rm GL}^+(n,\Rb)$ and use
the local coordinates $(x,y) \in F^+$. A typical element of the
groupoid action algebra is of the form $fU^\ast_\phi$, where
$U^\ast_\phi$ stands for $\phi^{-1} \in \G$ and $f\in
C^\infty_c(F^+)$. The elements of $\Hc_n$ act as the following
operators.
\begin{align}\label{aux-action-Hn-on-A}
\begin{split}
& X_k = y_k^\mu\frac{\p}{\p x^\mu}, \qquad X_k(fU^\ast_\phi) := X_k(f)U^\ast_\phi,\\
& Y_i^j = y_i^\mu\frac{\p}{\p y_\mu^j}, \qquad Y_i^j(fU^\ast_\phi) := Y_i^j(f)U^\ast_\phi,\\
& \d_{jk \,  \ell_1 \ldots \ell_r}^i \, ( f \, U_{\phi}^\ast) = \g_{jk \,  \ell_1 \ldots \ell_r}^i (\phi)\, f \, U_{\phi}^\ast,
\end{split}
\end{align}
where
\begin{align}
\begin{split}
& \g_{jk \,  \ell_1 \ldots \ell_r}^i (\phi) = X_{\ell_r} \cdots X_{\ell_1} \big(\g_{jk}^i (\phi)\big),\\
& \g_{jk}^i (\phi) (x, y)  = \left( y^{-1} \cdot
{\phi}^{\prime} (x)^{-1} \cdot \part_{\mu} {\phi}^{\prime} (x) \cdot
y\right)^i_j \, y^{\mu}_k.
\end{split}
\end{align}
Therefore, for any $a,b\in \Ac$ we have the Leibniz rule
\begin{align}
\begin{split}
& Y_i^j (a b)  = Y_i^j (a) \, b \, + \, a \,Y_i^j (b), \\
& X_k(a  b)  =  X_k (a) \, b \, + \, a \,X_k (b)  + \d_{jk}^i (a) \, Y_{i}^{j} (b),\\
& \d_{jk}^i (a  b) =  \d_{jk}^i  (a) \, b  +  a \, \d_{jk}^i  (b).
\end{split}
\end{align}
Accordingly, we have
\begin{align}\label{aux-D-Y}
& \D(Y_i^j) = Y_i^j \ot 1+1 \ot Y_i^j,\\\label{aux-D-d}
& \D(\d_{jk }^i) = \d_{jk}^i \ot 1 + 1 \ot \d_{jk}^i,\\\label{aux-D-X}
& \D(X_k) = X_k \ot 1 + 1 \ot X_k + \d^i_{jk} \ot Y_i^j.
\end{align}
For simplicity, we will also employ the notation
\begin{equation}
{\d^a}_{k  \ell_1 \ldots \ell_r} :=\d_{jk  \ell_1 \ldots \ell_r}^i, \quad Y_a :=Y_i^j
\qquad a=\left(\begin{matrix}i\\j\end{matrix}\right).
\end{equation}

\medskip

\ni Throughout this subsection we  set  $ \Hc:= \Hc_n$,  the
Connes-Moscovici Hopf algebra, and $\Kc:=U(\Fg_0)$, where
$\Fg_0:=g\ell_n$.  We also let $N=\Cb_\d$ the  SAYD
module over $ \Hc$ where $\d: \Hc\ra \Cb$ is the character defined on
the generators by
\begin{equation}
\d(Y_i^j)=\d_i^j, \qquad \d(X_k)=\d(\d_{jk   \ell_1 \ldots \ell_r}^i)=0, \qquad 1\le i,j,k, l_t\le n
\end{equation}
and is extended  on $ \Hc$ multiplicatively. Finally we set $V=
S(\Fg_0^\ast)_{[2n]}$ with the canonical $\Kc$-SAYD module structure
as recalled in \eqref{example-symm-alg-SAYD}.

\medskip

\ni Let $\Ac:=\Ac_\G= C^{\infty}_c(F^+)\rtimes \G$ and $\tau$ be the
canonical trace  on $\Ac$ defined in \eqref{aux-canonical-trace}.
Since $\tau\in C^0_\Hc(\Ac, \Cb_\d)$ and $\tau$ is a cyclic cocycle
\cite{ConnMosc98}, applying the cup product \eqref{aux-equ-cup} we
get the  map of cocyclic modules
\begin{align}\label{aux-equivariant-map}
\begin{split}
&\chi_\tau^{\rm eq}:HC^q_\Kc( \Hc,V,\Cb_\d)\longrightarrow HC^q_\Kc(\Ac, V)\\
&\chi^{\rm eq}_\tau(\phi)(v\ot a_0\odots a_q)= \tau\left(\phi(v)
\snb{0}(a_0)\cdots \phi(v)\snb{q}(a_q)\right).
\end{split}
\end{align}
We conclude this section by the identification of
$C^q_\Kc(\Kc,V,\Cb_\d)$ with $(V^\ast\ot  \Cb_\d\ot \Hc^{\ot\, q})^{\Fg_0}$,
where $V^\ast=\Hom_\Cb(V, \Cb)$ and  $\Fg_0$ acts on $V^\ast\ot
\Hc^{\ot\, q}$ via
\begin{align}
\begin{split}
&(\phi\ot \one\ot  h^1\odots h^q)Z=- \sum^q_{i=1}
 \phi\ot \one\ot h^1\odots \Ad_Z(h^i)\odots h^q\\
&+\phi\ot \d(Z)\ot h^1\odots h^q + \phi\cdot Z\ot
 \one\ot  h^1\odots h^q.
\end{split}
\end{align}
Here, the action of $\Fg_0$ on $V^\ast$ is defined by $(\phi\cdot
Z)(v)=-\phi(v\cdot Z)$.

\medskip

\ni The aforementioned identification is defined  by the map
\begin{align}\label{iso}
\begin{split}
&\Ic: (V^\ast\ot \Cb_\d\ot  \Hc^{\ot\, q})^{\Fg_0}\ra C_\Kc^q( \Hc, V,\Cb_\d)\\
&\Ic(\phi\ot \one\ot h^1\odots h^q)(v)= \phi(v)\ot_\Hc 1_ \Hc\ot
h^1\odots h^q.
\end{split}
\end{align}

\begin{proposition}
The map $\Ic$ defined in \eqref{iso} is an isomorphism of vector spaces.
\end{proposition}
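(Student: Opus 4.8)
The plan is to factor $\Ic$ through the isomorphism $\Jc$ recorded earlier, which identifies $\Cb_\d\ot_\Hc\Hc^{\ot q+1}$ with $\Cb_\d\ot\Hc^{\ot q}$, and then to invoke the tensor--hom adjunction. First I would observe that $\Jc$ is a linear bijection whose inverse sends $m\ot h^1\odots h^q$ to $m\ot_\Hc 1_\Hc\ot h^1\odots h^q$; indeed $\Jc(m\ot_\Hc 1_\Hc\ot h^1\odots h^q)=m\,1\ps{1}\ot S(1\ps{2})\cdot(h^1\odots h^q)=m\ot h^1\odots h^q$. Comparing with \eqref{iso} this gives $\Ic(\phi\ot\one\ot h^1\odots h^q)(v)=\Jc^{-1}(\phi(v)\ot h^1\odots h^q)$, so that $\widetilde{\Ic}:=\Jc\circ\Ic$ is simply the evaluation map $\widetilde{\Ic}(\phi\ot\one\ot h^1\odots h^q)(v)=\phi(v)\,\one\ot h^1\odots h^q$ with values in $\Hom_\Cb(V,\Cb_\d\ot\Hc^{\ot q})$.

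Since $V=S(\Fg_0^\ast)\nsb{2n}$ is finite dimensional, the evaluation map $V^\ast\ot W\to\Hom_\Cb(V,W)$ is a linear isomorphism for every vector space $W$; taking $W=\Cb_\d\ot\Hc^{\ot q}$ shows that $\widetilde{\Ic}$, and hence $\Ic$, is a linear bijection onto all of $\Hom_\Cb(V,\Cb_\d\ot_\Hc\Hc^{\ot q+1})$. What remains is to check that this bijection carries the $\Fg_0$-invariant subspace $(V^\ast\ot\Cb_\d\ot\Hc^{\ot q})^{\Fg_0}$, for the action displayed before \eqref{iso}, exactly onto the $\Kc$-linear maps, that is onto $C^q_\Kc(\Hc,V,\Cb_\d)=\Hom_\Kc(V,\Cb_\d\ot_\Hc\Hc^{\ot q+1})$. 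Since the action on $\Cb_\d\ot\Hc^{\ot q}$ is to be defined as the transport of the diagonal right $\Kc$-action through $\Jc$, the map $\Jc$ is $\Kc$-equivariant by construction, and a map $f\colon V\to\Cb_\d\ot_\Hc\Hc^{\ot q+1}$ is $\Kc$-linear if and only if $\Jc\circ f$ is.

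The crux, and the step I expect to be the main obstacle, is to compute this transported action. For a generator $Z\in\Fg_0=g\ell_n$, which is primitive with $S(Z)=-Z$ and acts on $\Cb_\d$ by $\one\cdot Z=\d(Z)\one$, I would evaluate $\Jc\big((\one\ot_\Hc 1_\Hc\ot h^1\odots h^q)\cdot Z\big)$ by expanding the iterated coproduct of $Z$ over the $q+1$ tensor slots. The zeroth slot contributes $\Jc(\one\ot_\Hc Z\ot h^1\odots h^q)=\d(Z)\,\one\ot h^1\odots h^q+\one\ot S(Z)\cdot(h^1\odots h^q)$, whose second summand equals $-\sum_i\one\ot h^1\odots Zh^i\odots h^q$, while each remaining slot $i$ contributes $+\one\ot h^1\odots h^iZ\odots h^q$. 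Collecting these and using $h^iZ-Zh^i=-[Z,h^i]=-\Ad_Z(h^i)$, valid since $\Ad_Z=\ad_Z$ on a primitive $Z$, gives the transported action $(\one\ot h^1\odots h^q)\cdot Z=\d(Z)\,\one\ot h^1\odots h^q-\sum_i\one\ot h^1\odots\Ad_Z(h^i)\odots h^q$.

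Finally I would invoke the standard principle that, for a finite-dimensional right $\Kc$-module $V$, an element $f\in\Hom_\Cb(V,W)$ is $\Kc$-linear precisely when its preimage $\sum\phi\ot w\in V^\ast\ot W$ is invariant under the diagonal $\Fg_0$-action whose $V^\ast$-component is $(\phi\cdot Z)(v)=-\phi(v\cdot Z)$ and whose $W$-component is the action found above. Substituting that action, the invariance condition becomes exactly the $\Fg_0$-action displayed before \eqref{iso}, namely $(\phi\ot\one\ot h^1\odots h^q)\cdot Z=\phi\cdot Z\ot\one\ot h^1\odots h^q+\phi\ot\d(Z)\,\one\ot h^1\odots h^q-\sum_i\phi\ot\one\ot h^1\odots\Ad_Z(h^i)\odots h^q$. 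Hence $\widetilde{\Ic}$ restricts to a bijection from the $\Fg_0$-invariants onto $\Hom_\Kc(V,W)$, and transporting back through the bijection $\Jc$ shows that $\Ic$ is the asserted isomorphism of vector spaces; the only genuinely computational ingredient is the action transport of the preceding paragraph, everything else being the tensor--hom adjunction and the finiteness of $\dim V$.
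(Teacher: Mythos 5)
Your proof is correct, and it takes a more structural route than the paper's. The paper argues directly: it first verifies by hand that $\Ic$ sends $\Fg_0$-invariant elements to $\Kc$-equivariant maps (a computation that pushes the invariance relation across $\ot_\Hc$ using primitivity of $Z$ and the diagonal $\Hc$-module structure), and then exhibits an explicit inverse, written in terms of a basis $\{v_i\}$ of $V$ and its dual basis $\{\nu^i\}$, leaving to the reader the verification that this formula is basis-independent and a two-sided inverse. You instead decompose $\Ic$ as $\Jc^{-1}$ composed with the evaluation map $V^\ast\ot W\to\Hom_\Cb(V,W)$ for $W=\Cb_\d\ot\Hc^{\ot q}$, invoke the finite-dimensionality of $V=S(\Fg_0^\ast)\nsb{2n}$ to get a bijection on the full spaces, and then match $\Fg_0$-invariance with $\Kc$-linearity by transporting the diagonal right action through $\Jc$. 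The two arguments share their only real computation: your transported-action calculation (primitivity of $Z$, $S(Z)=-Z$, and $h^iZ-Zh^i=-\Ad_Z(h^i)$) is the same algebra as the paper's well-definedness check, just packaged as a statement about the codomain rather than about $\Ic$ itself; moreover the paper's inverse formula is precisely your composite, since $\Ic^{-1}(\phi)=\sum_i\nu^i\ot\Jc(\phi(v_i))$, so the two proofs reconcile completely. What your organization buys is that the paper's ``straightforward to check'' claims become genuinely automatic -- the inverse is a composite of two canonical isomorphisms, hence basis-free and two-sided by construction -- at the cost of having to set up the transported action and the invariants-versus-equivariants dictionary explicitly, which the paper's direct computation avoids.
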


\begin{proof}
Let us first check that $\Ic$ is well-defined. Indeed,
\begin{align*}
&\Ic(\phi\ot \one \ot h^1\odots h^q)(v\cdot Z)=\phi(v\cdot Z)\ot_ \Hc 1_ \Hc\ot h^1\odots h^q \\
&= -(\phi\cdot Z)(v)\ot_ \Hc 1_ \Hc\ot h^1\odots h^q\\
&=  -\sum^q_{i=1} \phi(v)\ot_ \Hc 1_ \Hc\ot h^1\odots \Ad_Z(h^i)\odots h^q\\
&+\d(Z)\phi(v)\ot_ \Hc 1_ \Hc\ot h^1\odots h^q\\
&=  -\sum^q_{i=1} \phi(v)\ot_ \Hc 1_ \Hc\ot h^1\odots \Ad_Z(h^i)\odots h^q\\
&+   \phi(v)\ot_ \Hc Z\ot h^1\odots h^q  +\sum_{i=1}^q \phi(v)\ot_ \Hc 1_ \Hc\ot h^1\odots Zh^i\odots  h^q\\
&= (\phi(v)\ot 1_ \Hc\ot h^1\odots h^q)\cdot Z=  (\Ic(\phi\ot \one \ot
h^1\odots h^q)(v))\cdot Z.
\end{align*}
Next,  we introduce an inverse map  for $\Ic$. To this end we fix a
basis for $V$, say $\{v_1, \ldots, v_m\}$, with a dual basis
$\{\nu^1, \ldots, \nu^m\}$ for $V^\ast$, and we define
\begin{align}
&\Ic^{-1}: C_\Kc^q( \Hc, V,\Cb_\d)\ra  (V^\ast\ot\Cb_\d\ot   \Hc^{\ot
q})^{\Fg_0}\\\notag
 &\Ic^{-1}(\phi)=\\\notag
 &\sum_{i=1}^m \nu^i \ot
\phi(v_i)\snb{-1}\d(\phi(v_i)\snb{0}\ps{1})\ot
S(\phi(v_i)\snb{0}\ps{2})\cdot \Big(\phi(v_i)\snb{1})\odots
\phi(v_i)\snb{q}\Big).
\end{align}
It is straightforward to check that $\Ic^{-1}$ is independent of the
choice of bases and is inverse to $\Ic$.
\end{proof}

\subsection{A SAYD-twisted cyclic  cocycle in codimension
1}\label{section-codim-1} In this subsection we  keep the setting of
Subsection \ref{subsection-H-n} for $n=1$.  Our aim is to introduce
an equivariant cyclic 1-cocycle $\vp\in C^1_\Kc(\Ac,V)$. In order to
apply \eqref{aux-equivariant-map},   we shall consider the complex
$C^1_\Kc(\Hc,V,\Cb_\d)$, which in turn is identified with the
$\Fg_0$-invariant subspace $(V^\ast \ot \Cb_\d \ot \Hc)^{\Fg_0}$ via
\eqref{iso}.

\medskip

\ni Let $\{R\}$ be the  basis for $\Fg_0^\ast$ as the dual basis of
$\{Y:=Y^1_1\}$ for $\Fg_0$. Let also $\{1, R\}$ be the basis of
$V$ and  $\{1^\ast,S\}$ as  the  dual basis for $V^\ast$.

\medskip

We define $ \vp_0,\vp_1:V \ot \Ac^{\ot\;2} \to \Cb,$ by
\begin{equation}
 \vp_0 = \chi_\tau^{\rm eq}(1^\ast \ot \one \ot X), \quad \vp_1 =
\chi_\tau^{\rm eq}(S \ot \one \ot \d_1),
\end{equation}
 more precisely
\begin{align}\label{aux-pieces-of-cocycle}
\begin{split}
&\vp_0((\a1 + \b\t) \ot a_0 \ot a_1) = \a\tau(a_0X(a_1)), \\
& \vp_1((\a1 + \b\t) \ot a_0 \ot a_1) = \b\tau(a_0\d_1(a_1)).
\end{split}
\end{align}
\begin{lemma}
The linear maps  $\vp_0,\vp_1:V \ot \Ac^{\ot\;2} \to \Cb$ defined in
\eqref{aux-pieces-of-cocycle} are $\Kc$-equivariant.
\end{lemma}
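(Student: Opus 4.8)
The plan is to verify the defining equivariance condition \eqref{aux-equivariancy} directly for $\vp_0$ and $\vp_1$, by unwinding what $\Kc$-equivariance means for a map $V \ot \Ac^{\ot 2} \to \Cb$. Since $\Kc = U(\Fg_0)$ with $\Fg_0 = g\ell_1$ one-dimensional, spanned by $Y = Y^1_1$, it suffices to check equivariance with respect to the single generator $Y$; the full $\Kc$-linearity then follows because $Y$ generates $\Kc$ as an algebra and the diagonal action is compatible with the algebra structure. Concretely, I would compute both sides of $\vp_i\big((v \ot \td a)\cdot Y\big) = \ve(Y)\,\vp_i(v \ot \td a) = 0$, where the right action of $\Kc$ on $V \ot \Ac^{\ot 2}$ is the one specified just before \eqref{aux-HC(A,V)}, namely $(v \ot a^0 \ot a^1)\cdot h = v h\ps{1} \ot S(h\ps{3})a^0 \ot S(h\ps{2})a^1$.

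First I would record the relevant structure constants. The coadjoint action of $Y$ on $V = S(\Fg_0^\ast)\nsb{2}$ with basis $\{1, R\}$: since $R$ is dual to $Y$ and $[Y,Y]=0$ in the abelian $g\ell_1$, the coadjoint action of $Y$ on $\Fg_0^\ast$ is zero, so $1\cdot Y = 0$ and $R \cdot Y = 0$ — the module action of $Y$ on $V$ is trivial. Next, the coproduct of $Y$ is primitive, $\D(Y) = Y\ot 1 + 1\ot Y$ (see \eqref{aux-D-Y}), its antipode is $S(Y) = -Y$, and its counit is $\ve(Y)=0$. Feeding these into the formula for $(v\ot a^0 \ot a^1)\cdot Y$, the primitivity of $Y$ makes the diagonal action expand into a sum of three terms, one acting on $v$ (giving zero by triviality of the $V$-action) and two acting by $Y$ (through $S(Y)=-Y$) on $a^0$ and $a^1$ respectively.

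The heart of the computation is then to show these surviving terms cancel. For $\vp_0$ the relevant contribution is $\a\,\tau\big((Y(a_0))X(a_1)\big) + \a\,\tau\big(a_0\, X(Y(a_1))\big)$ against the sign-carrying antipode terms, and one uses the commutation relation $[Y,X] = -X$ that follows from $[Y^1_1, X_1] = \d^1_1 X_1 = X_1$ in \eqref{aux-relations-CM}, together with the fact that $\tau$ is a trace and $Y$-invariant against it. For $\vp_1$ the analogous input is that $\d_1 := \d^1_{11}$ satisfies $[Y, \d_1] = -\d_1$ (a special case of the commutator $[Y^q_p, \d^i_{jk}]$ relation), so the $Y$-action on $\d_1(a_1)$ reproduces the needed weight and forces cancellation. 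I expect the main obstacle to be bookkeeping the antipode signs and the Leibniz-type action of $Y$ through the operators $X$ and $\d_1$ correctly, i.e.\ ensuring that the weight of $X$ and of $\d_1$ under $\ad_Y$ exactly compensates the coadjoint weight carried by the coefficients $\a, \b$ so that each $\vp_i$ is annihilated by $Y$; once the weights match, the vanishing $\ve(Y)=0$ on the right-hand side is immediate.
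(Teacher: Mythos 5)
Your setup coincides with the paper's proof: reduce $\Kc$-equivariance to the single generator $Y$ of $\Kc = U(g\ell_1)$, observe that the coadjoint action of $Y$ on $V = S(g\ell_1^\ast)\nsb{2}$ vanishes because $g\ell_1$ is abelian, expand the diagonal action using primitivity of $Y$ and $S(Y)=-Y$, and thereby reduce the claim to the cancellation
\begin{equation*}
\a\tau\big(Y(a_0)X(a_1)\big) + \a\tau\big(a_0\,XY(a_1)\big) = 0,
\end{equation*}
and similarly with $\d_1$ in place of $X$ (and $\b$ in place of $\a$). Up to this point your plan is exactly the paper's.

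The gap is in the cancellation mechanism itself, which as you describe it would fail. First, your commutation relations carry the wrong sign: \eqref{aux-relations-CM} gives $[Y^1_1,X_1]=\d^1_1X_1=X_1$, i.e. $[Y,X]=+X$, and likewise $[Y,\d_1]=+\d_1$; you assert $[Y,X]=-X$ and $[Y,\d_1]=-\d_1$ while citing the very computation that contradicts you. Second, and more fundamentally, the property of $\tau$ that drives the proof is not that it is ``a trace and $Y$-invariant,'' and there is no ``coadjoint weight carried by $\a,\b$'' to compensate against --- by your own correct observation that weight is zero, since the $g\ell_1$-action on $V$ is trivial. What the paper uses is the $\d$-twisted integration by parts property \eqref{aux-property-S-tilde}, $\tau(h(a)b)=\tau(aS_\d(h)(b))$, which for the primitive element $Y$ reads
\begin{equation*}
\tau\big(Y(a)b\big) = \tau(ab) - \tau\big(aY(b)\big), \qquad S_\d(Y)=\d(Y)1-Y=1-Y,
\end{equation*}
because $\d(Y)=\d(Y^1_1)=1$. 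The term $+\tau(a_0X(a_1))$ contributed by the modular character is precisely what cancels the term $-\tau(a_0X(a_1))$ arising from $XY=YX-X$; if you instead run the computation with plain $Y$-invariance of $\tau$, or with your sign $[Y,X]=-X$, you are left with a nonzero multiple of $\tau(a_0X(a_1))$. So the lemma hinges on the exact matching of the $\ad_Y$-weight $+1$ of the operators $X$ and $\d_1$ with the modular weight $\d(Y)=1$ entering through $S_\d$, not on any weight carried by the coefficients; this identification is the step missing from your argument.
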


\begin{proof}
We have
\begin{align}
\begin{split}
& \vp_0((\a1 + \b\t) \ot Y(a_0) \ot a_1) + \vp_0((\a1 + \b\t) \ot a_0 \ot Y(a_1)) = \\
& \a\tau(Y(a_0)X(a_1)) + \a\tau(a_0X(Y(a_1))) = \\
& \a\tau(Y(a_0)X(a_1)) + \a\tau(a_0Y(X(a_1))) - \a\tau(a_0X(a_1)) = \\
& \a\tau(Y(a_0)X(a_1)) - \a\tau(Y(a_0)X(a_1)) = 0,
\end{split}
\end{align}
 the third equality follows from the integration by parts property \cite{ConnMosc98}
 \begin{equation}\label{aux-property-S-tilde}
\tau(h(a)b) = \tau(aS_\d(h)(b)), \quad \forall h\in \Hc,
\forall a,b\in \Ac.
\end{equation}
Similarly we have
\begin{align}
\begin{split}
& \vp_1((\a1 + \b\t) \ot Y(a_0) \ot a_1) + \vp_1((\a1 + \b\t) \ot a_0 \ot Y(a_1)) \\
& =\a\tau(Y(a_0)\d_1(a_1)) + \a\tau(a_0\d_1(Y(a_1)))  \\
& =\a\tau(Y(a_0)\d_1(a_1)) + \a\tau(a_0Y(\d_1(a_1))) - \a\tau(a_0\d_1(a_1))  \\
& =\a\tau(Y(a_0)\d_1(a_1)) - \a\tau(Y(a_0)\d_1(a_1)) = 0.
\end{split}
\end{align}

\end{proof}

\begin{lemma}
The linear map  $\vp_0 - \vp_1:V \ot \Ac^{\ot\;2} \to \Cb$ is
a Hochschild 1-cocycle.
\end{lemma}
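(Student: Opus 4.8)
The plan is to verify directly that the Hochschild coboundary of $\vp_0 - \vp_1$ vanishes on $V \ot \Ac^{\ot\,3}$. Recall that for a cochain $\vp \in C^1_\Kc(\Ac, V)$ the Hochschild differential is $b\vp = \p_0\vp - \p_1\vp + \p_2\vp$, where the face maps are those recalled in the display defining the cocyclic module on $C^n_H(A,V)$ just before Section~2.2. Explicitly, for $v \ot a^0 \ot a^1 \ot a^2 \in V \ot \Ac^{\ot\,3}$,
\begin{align*}
(b\vp)(v \ot a^0 \ot a^1 \ot a^2) &= \vp(v \ot a^0a^1 \ot a^2) - \vp(v \ot a^0 \ot a^1a^2)\\
&\quad + \vp(v\ns{0} \ot (S^{-1}(v\ns{-1})a^2)a^0 \ot a^1).
\end{align*}
So the first step is to write out $b\vp_0$ and $b\vp_1$ separately on a general element of $V$, which I will take to be $(\a 1 + \b\t) \ot a^0 \ot a^1 \ot a^2$ using the basis $\{1, R\}$ of $V$.

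The key computational input is the Leibniz rule for the generators $X = X_1$ and $\d_1 = \d^1_{11}$ of $\Hc_1$ acting on products in $\Ac$, as specialized from the codimension-$n$ rules in \eqref{aux-D-Y}--\eqref{aux-D-X}. In codimension $1$ these read $X(ab) = X(a)b + aX(b) + \d_1(a)Y(b)$ and $\d_1(ab) = \d_1(a)b + a\d_1(b)$, with $Y = Y^1_1$ a derivation. The second crucial ingredient is the coaction on $V = S(\Fg_0^\ast)\nsb{2}$: by the Koszul coaction of Example~\ref{example-symm-alg-SAYD}, $\nabla_K(1) = Y \ot R$ (up to the identification of $\Fg_0$ with its generator $Y$) while $\nabla_K(R) = 0$ since $R\cdot R = 0$ in the truncated algebra. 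This is precisely what makes the last face map $\p_2$ nontrivial and couples $\vp_0$ to $\vp_1$: the term $S^{-1}(v\ns{-1})a^2$ in $\p_2\vp$ produces a $Y$ acting on $a^2$ exactly when $v = 1$, and it is the $\d_1(a^0)Y(a^1)$ correction in the Leibniz rule for $X$ that the coaction is designed to cancel.

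I would carry out the steps in this order. First, expand $(b\vp_0)$ and unwind the trace-cyclicity and integration-by-parts property \eqref{aux-property-S-tilde} together with the Leibniz rule for $X$; the anomalous term $\tau\big(a^0\,\d_1(a^1)\,Y(a^2)\big)$ (or its cyclic rearrangement) will survive. Second, expand $(b\vp_1)$ using the ordinary Leibniz rule for $\d_1$, which is a derivation, so that $\vp_1$ alone will be closer to closed but will leave a matching $\b$-term. Third, and this is where the coefficient module does the work, compute the contribution of $\p_2$ on the $v = 1$ component: here $v\ns{-1} = Y$ and $v\ns{0} = R$, so the relevant summand of $b\vp_1$ evaluates $\vp_1$ on $R \ot (S^{-1}(Y)a^2)a^0 \ot a^1 = R \ot (-Y(a^2)a^0)\ot a^1$, producing a term $-\b\,\tau\big((Y(a^2)a^0)\,\d_1(a^1)\big)$ that is engineered to cancel the leftover anomaly from $b\vp_0$. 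Fourth, collect the $\a$- and $\b$-coefficients and check the total is zero.

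The main obstacle will be bookkeeping in the third step: correctly tracking the antipode $S^{-1}(v\ns{-1})$ acting on $a^2$ and then re-expressing the result via the trace property \eqref{aux-property-S-tilde} so that the $X$-anomaly in $b\vp_0$ and the coaction term in $b\vp_1$ land in the same normal form. In particular I must be careful that the twisted antipode $S_\d$ (not the plain $S$) governs integration by parts, and that $\d(Y) = 1$ while $\d(X) = \d(\d_1) = 0$, since these modular factors determine the precise signs. I expect that once both coboundaries are in the normalized form $\tau(a^0\,\d_1(a^1)\,Y(a^2))$ up to cyclic permutation, the combination $b(\vp_0 - \vp_1)$ cancels identically, establishing that $\vp_0 - \vp_1$ is a Hochschild $1$-cocycle.
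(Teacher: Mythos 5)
Your overall skeleton coincides with the paper's proof: expand $b(\vp_0-\vp_1)$ directly on $(\a 1+\b R)\ot a_0\ot a_1\ot a_2$ (the paper writes $\t$ for $R$), use the Leibniz rules \eqref{aux-D-d}, \eqref{aux-D-X} for $\d_1$ and $X$, and cancel using the tracial property of $\tau$. However, there is a genuine gap at precisely the step you single out as the crucial one. The coaction entering the last coface $\p_2\vp(v\ot a^0\ot a^1\ot a^2)=\vp\big(v\ns{0}\ot(S^{-1}(v\ns{-1})a^2)a^0\ot a^1\big)$ must be the $\Kc=U(g\ell_1)$-comodule structure on $V$, namely the codimension-one analogue of \eqref{aux-Koszul-K-coaction}: $\nabla(1)=1\ot 1+Y\ot R$ and $\nabla(R)=1\ot R$. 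What you state, $\nabla_K(1)=Y\ot R$ and $\nabla_K(R)=0$, is only the reduced Lie-algebra coaction of Example \ref{example-symm-alg-SAYD}. Plugging your formulas into $\p_2$ deletes the group-like parts of the coaction, \ie the plain cyclic-permutation terms $\a\,\tau(a_2a_0X(a_1))$ (from $1\ot 1$ fed into $\vp_0$) and $\b\,\tau(a_2a_0\d_1(a_1))$ (from $1\ot R$ fed into $\vp_1$). These are indispensable: after expanding $X(a_1a_2)$ and $\d_1(a_1a_2)$ by Leibniz, they are exactly what cancels $-\a\,\tau(a_0X(a_1)a_2)$ and $+\b\,\tau(a_0\d_1(a_1)a_2)$ via $\tau(ab)=\tau(ba)$. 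With the reduced coaction taken literally, $b(\vp_0-\vp_1)$ does not vanish, so the computation as planned fails.

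There is a second, related error: the one coaction term you do keep carries the wrong coefficient. It arises from the $Y\ot R$ summand of $\nabla(1)$ applied to the component $\a\cdot 1$ of $v$, so inside $\p_2\vp_1$ it reads $\a\,\vp_1\big(R\ot(-Y(a_2))a_0\ot a_1\big)=-\a\,\tau(Y(a_2)a_0\d_1(a_1))$; hence its contribution to $b(\vp_0-\vp_1)$ is $+\a\,\tau(Y(a_2)a_0\d_1(a_1))$, proportional to $\a$, not $\b$. This matters because the anomaly it must cancel, $-\a\,\tau(a_0\d_1(a_1)Y(a_2))$ coming from the $\d_1\ot Y$ term of $\D(X)$ in $-\vp_0(v\ot a_0\ot a_1a_2)$, is also proportional to $\a$ (since $\vp_0$ lives entirely on the $1$-component of $V$); a term proportional to $\b$ could never cancel it for arbitrary $\a,\b$. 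Once both corrections are made, your computation closes and reproduces the paper's proof verbatim. A final minor point: the integration-by-parts property \eqref{aux-property-S-tilde} and the twisted antipode $S_\d$ are not needed for this lemma at all; the paper's argument uses only \eqref{aux-D-d}, \eqref{aux-D-X}, the coaction, and the trace property, while \eqref{aux-property-S-tilde} is what powers the neighbouring equivariance and cyclicity statements.
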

\begin{proof}
We have
\begin{align}
\begin{split}
& b(\vp_0 - \vp_1)((\a1 + \b\t) \ot a_0 \ot a_1 \ot a_2) = (\vp_0 - \vp_1)((\a1 + \b\t) \ot a_0a_1 \ot a_2) \\
& - (\vp_0 - \vp_1)((\a1 + \b\t) \ot a_0 \ot a_1a_2) + (\vp_0 - \vp_1)((\a1 + \b\t) \ot a_2a_0 \ot a_1) \\
& - (\vp_0 - \vp_1)(\a\t \ot Y(a_2)a_0 \ot a_1) \\
& = \a\tau(a_0a_1X(a_2)) - \a\tau(a_0X(a_1a_2)) + \a\tau(a_2a_0X(a_1)) + \a\tau(Y(a_2)a_0\d_1(a_1)) \\
& - \b\tau(a_0a_1\d_1(a_2)) + \b\tau(a_0\d_1(a_1a_2)) -
\b\tau(a_2a_0\d_1(a_1))=0.
\end{split}
\end{align}
Here we have used \eqref{aux-D-d} and \eqref{aux-D-X}.
\end{proof}

\begin{proposition}
The Hochschild cocycle  $\vp_0 - \vp_1:V \ot \Ac^{\ot\;2} \to \Cb$,
defined in \eqref{aux-pieces-of-cocycle}, is cyclic.
\end{proposition}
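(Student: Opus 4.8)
The strategy is to check cyclicity directly: since $b(\vp_0-\vp_1)=0$ is already established, it remains to verify the degree-one cyclicity relation $\tau\vp=-\vp$ for $\vp:=\vp_0-\vp_1$, where $\tau$ now denotes the cyclic operator of the cocyclic module $C^\bullet_\Kc(\Ac,V)$. The one structural input I must pin down first is the $\Kc=U(\Fg_0)$-coaction on $V=S(\Fg_0^\ast)\nsb{2}$. Lifting the Koszul coaction of Example \ref{example-symm-alg-SAYD} to $\Kc$ by pairing $U(\Fg_0)=\Cb[Y]$ against $S(\Fg_0^\ast)=\Cb[\t]$ and discarding the truncated terms ($\t^2=0$ in $V$), I expect
\begin{equation*}
\nabla(1)=1\ot 1+Y\ot\t,\qquad \nabla(\t)=1\ot\t,
\end{equation*}
which one verifies to be counital and coassociative. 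Since $\Kc$ is cocommutative, $S^{-1}=S$ with $S(Y)=-Y$ and $S(1)=1$.

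Feeding this into the cyclic operator, $(\tau\vp)(v\ot a^0\ot a^1)=\vp(v\ns{0}\ot S^{-1}(v\ns{-1})(a^1)\ot a^0)$, and recalling from \eqref{aux-pieces-of-cocycle} that $\vp(1\ot b^0\ot b^1)=\tau(b^0X(b^1))$ and $\vp(\t\ot b^0\ot b^1)=-\tau(b^0\d_1(b^1))$, the two basis evaluations should read
\begin{align*}
&(\tau\vp)(1\ot a^0\ot a^1)=\tau(a^1X(a^0))+\tau(Y(a^1)\d_1(a^0)),\\
&(\tau\vp)(\t\ot a^0\ot a^1)=-\tau(a^1\d_1(a^0)).
\end{align*}
I want to stress that the second summand $\tau(Y(a^1)\d_1(a^0))$ in the first line is produced exactly by the $Y\ot\t$ part of $\nabla(1)$; this is precisely the term that will cancel the unwanted contribution below.

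To close, I would invoke the integration-by-parts identity \eqref{aux-property-S-tilde}, $\tau(h(a)b)=\tau(aS_\d(h)(b))$, in conjunction with the coproducts \eqref{aux-D-Y}, \eqref{aux-D-d}, \eqref{aux-D-X} and the character $\d$. A one-line computation of the twisted antipode gives $S_\d(X)=S(X)=-X+\d_1Y$ and $S_\d(\d_1)=-\d_1$. Applying the trace property $\tau(bc)=\tau(cb)$ and then \eqref{aux-property-S-tilde} turns $\tau(a^1X(a^0))$ into $-\tau(a^0X(a^1))+\tau(a^0\d_1(Y(a^1)))$ and turns $\tau(Y(a^1)\d_1(a^0))$ into $-\tau(a^0\d_1(Y(a^1)))$; the two $\d_1Y$-terms cancel, leaving $(\tau\vp)(1\ot a^0\ot a^1)=-\tau(a^0X(a^1))=-\vp(1\ot a^0\ot a^1)$. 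For the other vector, $\tau(a^1\d_1(a^0))=\tau(a^0S_\d(\d_1)(a^1))=-\tau(a^0\d_1(a^1))$ gives $(\tau\vp)(\t\ot a^0\ot a^1)=\tau(a^0\d_1(a^1))=-\vp(\t\ot a^0\ot a^1)$. Hence $\tau\vp=-\vp$ and $\vp$ is cyclic. The genuinely delicate point I anticipate is fixing the $\Kc$-coaction correctly: the lift of the Koszul coaction must carry the $Y\ot\t$ summand in $\nabla(1)$, and it is exactly this summand that supplies the term cancelling the $\d_1Y$-contribution coming from integrating $X$ by parts. Everything else reduces to bookkeeping with the Leibniz rules and the trace.
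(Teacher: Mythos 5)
Your proof is correct and follows essentially the same route as the paper: you apply the cyclic operator with the lifted Koszul coaction $\nabla(1)=1\ot 1+Y\ot\t$, $\nabla(\t)=1\ot\t$, and then use the trace property together with the integration-by-parts identity \eqref{aux-property-S-tilde} (via $S_\d(X)=-X+\d_1Y$, $S_\d(\d_1)=-\d_1$) to get the cancellation of the $\d_1Y$-terms and conclude $\tau\vp=-\vp$. The only difference is presentational: you evaluate on the basis vectors $1$ and $\t$ separately and make the twisted antipode computation explicit, whereas the paper works with a general element $\a 1+\b\t$ and compresses these steps.
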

\begin{proof}
By using the $\d$ invariancy of $\tau$, \eqref{aux-D-d} and \eqref{aux-D-X}
we have
\begin{align*}
& t(\vp_0 - \vp_1)((\a1 + \b\t) \ot a_0 \ot a_1) = (\vp_0 - \vp_1)((\a1 + \b\t) \ot a_1 \ot a_0) \\
& - (\vp_0 - \vp_1)(\a\t \ot Y(a_1) \ot a_0) \\
& = \a\tau(a_1X(a_0)) + \a\tau(Y(a_1)\d_1(a_0)) - \b\tau(a_1\d_1(a_0)) \\
& = - \a\tau(a_0X(a_1)) + \b\tau(a_0\d_1(a_1))=-(\vp_0 -
\vp_1)((\a1 + \b\t) \ot a_0 \ot a_1).
\end{align*}
\end{proof}

\subsection{A SAYD-twisted cyclic  cocycle in codimension
2}\label{section-codim-2} Similar to the previous subsection, we
keep the setting of Subsection \ref{subsection-H-n} for $n=2$. Our
goal is to find a nontrivial  cyclic 2-cocycle  $\phi\in
C^2_\Kc(\Ac, V)$.

\medskip

\ni Similar to the case $n=1$,  we apply
\eqref{aux-equivariant-map}
 by considering $C^2_\Kc(\Hc,V,\Cb_\d)
\cong (V^\ast \ot \Cb_\d \ot \Hc^{\ot\,2})^{\Fg_0}$.

\medskip

\ni Let $\{R^i_j\;\mid 1\le i,j\le 2\}$ be the dual basis of
$g\ell_2$ with the pairing $\langle
Y^j_i,R^k_l\rangle= \d^j_k\d^i_l$. We take
\begin{equation}
\left\{1, R^i_j, R^k_lR^p_q\,\left|\;\;\left(\begin{array}{c}
                                      k \\
                                      l
                                    \end{array}
\right.\right) \leq \left(\begin{array}{c}
                                      p \\
                                      q
                                    \end{array}
\right)\right\},
\end{equation}
as  a basis for  $V$ which is simplified by $ \left\{1, R^a,
R^{ab}\,|\,a\leq b\right\}$. The  dual basis for $V^\ast$ is
expressed by $ \left\{1^\ast, S_a, S_{ab}\,|\,a\leq b\right\}.$

\medskip

\ni We recall from \cite{RangSutl-II} that the Koszul coaction \eqref{aux-Koszul-ex} gives rise to a $\Kc$-coaction by the formula
\begin{align}\label{aux-Koszul-K-coaction}
\begin{split}
& \nabla_{K}: V \to \Kc \ot V,\\
& \nabla_{K}(1)=1 \ot 1 + Y_a \ot R^a + \frac{1}{2!}Y_aY_b \ot R^{ab},\\
& \nabla_{K}(R^a)=1 \ot R^a + Y_b \ot R^{ab},\\
& \nabla_{K}(R^{ab})=1 \ot R^{ab}.
\end{split}
\end{align}

\medskip

\ni We decompose $V=V_0\oplus V_1\oplus V_2 $, where $V_0=\Cb\langle
1\rangle$, $V_1=\Cb\langle R^a \rangle$, and $V_2=\Cb\langle R^{ab}
\rangle$. Using this decomposition, any $\psi\in \Hom(V\ot \Ac^{q+1},
\Cb)$ is decomposed uniquely  as $\psi=\psi_0+\psi_1+\psi_2$ by
$\psi_i= \psi|_{V_i\ot \Ac^{\ot q+1}}$.

\medskip

 \ni We now consider the linear map  $\psi: V\ot \Ac^{\ot 3}\ra \Cb$  with
components
\begin{align}   \label{aux-vp-0}
& \psi_0 :=
\chi_\tau^{\rm eq}\left(\g_1 1^\ast \ot \one \ot X_{\s(1)} \ot X_{\s(2)} +
\g_2 1^\ast \ot \one \ot {\d^a}_{\s(1)} \ot X_{\s(2)}Y_a\right. \\\notag &
+\left.\g_3 1^\ast \ot \one \ot {\d^a}_{\s(1)}{\d^b}_{\s(2)}Y_b \ot Y_a +\g_4 1^\ast
\ot \one \ot {\d^a}_{\s(1)\s(2)} \ot Y_a\right).\\ \label{aux-vp-2}
& \psi_2 := \chi_\tau^{\rm eq}\left(\a_1S_{ab} \ot \one \ot {\d^a}_{\s(1)}
\ot {\d^b}_{\s(2)}\right. + \\\notag & \hspace{4cm} \left.\a_2S_{ab}
\ot \one \ot {\d^b}_{\s(1)} \ot {\d^a}_{\s(2)}\right),
\end{align}
\begin{align}\notag
& \psi_1:=
\chi_\tau^{\rm eq}\left(\b_1S_a \ot \one \ot {\d^a}_{\s(1)} \ot X_{\s(2)}
+ \b_2S_a\ot \one\ot X_{\s(1)} \ot {\d^a}_{\s(2)}\right. \\\notag & +
\b_3S_a \ot  \one \ot {\d^a}_{\s(1)}{\d^b}_{\s(2)} \ot Y_b + \b_4S_a \ot
\one\ot Y_b \ot {\d^a}_{\s(1)}{\d^b}_{\s(2)}\\\label{aux-vp-1} & + \b_5 S_a
\ot  \one \ot {\d^a}_{\s(1)}Y_b \ot {\d^b}_{\s(2)} + \b_6S_a \ot \one \ot
{\d^b}_{\s(1)}Y_b \ot {\d^a}_{\s(2)} \\\notag &  + \b_7 S_a\ot \one \ot
{\d^a}_{\s(1)} \ot {\d^b}_{\s(2)}Y_b + \b_8S_a \ot \one \ot {\d^b}_{\s(1)}
\ot {\d^a}_{\s(2)}Y_b \\\notag & + \left.\b_9S_a \ot \one\ot
\D({\d^a}_{\s(1)\s(2)})\right).
\end{align}
Our aim is to  determine the coefficients $\a_i,\b_j,\g_k$, such
that  $\psi$ is a cyclic 2-cocycle. To do so we prove a series of
technical lemmas.

\begin{lemma}\label{lemma-psi-eq}
The cochain $\psi$ is $\Kc$-equivariant.

\end{lemma}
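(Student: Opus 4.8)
The plan is to deduce the $\Kc$-equivariance of $\psi$ from the $\Fg_0$-invariance of the symbol it is assembled from. Write $\psi = \chi^{\rm eq}_\tau(\Phi)$, where $\Phi = \Phi_0 + \Phi_1 + \Phi_2 \in V^\ast \ot \Cb_\d \ot \Hc^{\ot 2}$ is the element appearing inside $\chi^{\rm eq}_\tau$ in \eqref{aux-vp-0}, \eqref{aux-vp-2} and \eqref{aux-vp-1} (the $\g$-, $\a$- and $\b$-combinations). By Proposition \ref{cyclic-map} the cup-product map $\chi^{\rm eq}_\tau(\phi) = \Psi(\phi \ot \tau)$ carries the $\Kc$-linear cochains $C^2_\Kc(\Hc, V, \Cb_\d)$ into the $\Kc$-equivariant maps $C^2_\Kc(\Ac, V)$, and under the identification $\Ic$ of \eqref{iso} we have $C^2_\Kc(\Hc, V, \Cb_\d) \cong (V^\ast \ot \Cb_\d \ot \Hc^{\ot 2})^{\Fg_0}$. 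Hence it suffices to show that $\Phi$ lies in this invariant subspace. Since $\Kc = U(\Fg_0)$ is generated by the primitive elements $Y_i^j$, this reduces to checking $\Phi \cdot Y_i^j = 0$ for all $i,j$, where the action is the one recalled before \eqref{iso}: the adjoint action $\Ad_{Y_i^j} = [Y_i^j, -]$ on the two $\Hc$-slots, the scalar $\d(Y_i^j) = \d_i^j$ on $\Cb_\d$, and the dual coadjoint action on the $V^\ast$-label.

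I would then split the verification according to the grading $V^\ast = V_0^\ast \oplus V_1^\ast \oplus V_2^\ast$. The coadjoint action preserves polynomial degree, so this decomposition is $\Fg_0$-stable and $\Phi_0, \Phi_1, \Phi_2$ may be treated independently; as the coefficients $\g_k, \a_i, \b_j$ are free, each antisymmetrized, index-contracted monomial must be invariant on its own. For $\Phi_0$ the label $1^\ast$ is coadjoint-invariant (the constant $1 \in V_0$ is fixed), so only the adjoint action on the $\Hc$-factors and the trace $\d_i^j$ survive; their cancellation is exactly what makes $1^\ast \ot \one \ot X_{\s(1)} \ot X_{\s(2)}$ invariant, the trace part of $[Y_i^j, X_k] = \d_k^j X_i$ being killed by the $\Cb_\d$-trace after antisymmetrizing over $\s \in S_2$. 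For $\Phi_1$ and $\Phi_2$ one feeds in, additionally, the nontrivial coadjoint action on $S_a$ and $S_{ab}$ — computable from the pairing $\langle Y^j_i, R^k_l\rangle = \d^j_k \d^i_l$ and the Koszul coaction \eqref{aux-Koszul-K-coaction} — together with the structure relations $[Y_p^q, \d^i_{jk}] = \d^q_j \d^i_{pk} + \d^q_k \d^i_{jp} - \d^i_p \d^q_{jk}$ and $[Y_i^j, Y_k^\ell] = \d_k^j Y_i^\ell - \d_i^\ell Y_k^j$ of \eqref{aux-relations-CM}.

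The hard part will be the nine-term piece $\Phi_1$. Two features make it delicate: the coproduct $\D(X_k) = X_k \ot 1 + 1 \ot X_k + \d^i_{jk} \ot Y_i^j$ of \eqref{aux-D-X} is not primitive, so the adjoint action on a slot carrying $X$ spills into a $\d \ot Y$ admixture; and the term $-\d^i_p \d^q_{jk}$ in $[Y_p^q, \d^i_{jk}]$ lowers the order of a $\d$-generator. These surplus contributions are precisely what the nontrivial coadjoint action on the degree-one label $S_a$ and the $\d$-trace must absorb, which is why the explicit coproduct term $\b_9 S_a \ot \one \ot \D({\d^a}_{\s(1)\s(2)})$ is included. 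The essential point is that, after antisymmetrizing over $\s \in S_2$ and contracting the adjoint indices, the frame (standard-representation) indices and the adjoint/trace indices cancel separately; once the three sources of terms are aligned, this is Kronecker-delta bookkeeping. As a cross-check, and as the route parallel to the codimension-one lemma, one may instead verify the defining identity $\psi((v\cdot Y_i^j) \ot a_0 \ot a_1 \ot a_2) = \sum_{s=0}^{2}\psi(v \ot a_0 \odots Y_i^j(a_s) \odots a_2)$ directly, using the integration-by-parts property \eqref{aux-property-S-tilde} with $S_\d(Y_i^j) = \d_i^j - Y_i^j$.
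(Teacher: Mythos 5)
Your overall strategy is exactly the paper's own: by the well-definedness of the equivariant cup product (Proposition \ref{cyclic-map}, giving \eqref{aux-equivariant-map}) and the identification $\Ic$ of \eqref{iso}, the equivariance of $\psi$ reduces to the $\Fg_0$-invariance of the symbols fed into $\chi^{\rm eq}_\tau$, checked on the primitive generators $Y_i^j$; the paper's proof then consists of recording the antisymmetrization identity \eqref{aux-gl_2-invariance-1} together with the two adjoint-action identities for the contracted combinations $\d^p_{qr}\ot Y^q_p$ and $\d^k_{tr}\ot\d^t_{ln}$, which implement precisely the ``frame indices and adjoint indices cancel separately'' bookkeeping you describe. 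Your observation that freeness of the coefficients $\a_i,\b_j,\g_k$ forces each antisymmetrized monomial to be invariant on its own is also correct and is implicit in the paper.

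However, two of your claims about what this bookkeeping will involve are wrong, and taking them literally would derail the computation. First, since each $Y_i^j$ is primitive by \eqref{aux-D-Y}, the action on the $\Hc$-slots is the plain commutator $\Ad_{Y_i^j}(h)=[Y_i^j,h]$; the coproduct \eqref{aux-D-X} of $X_k$ never enters the equivariance check, so there is no ``$\d\ot Y$ admixture'' --- the non-primitivity of $X_k$ matters only in the coboundary and cyclicity computations (Lemmas \ref{lemma-bvp2=0}--\ref{lemma-bvp0=0}), not here. Second, your rationale for the $\b_9$-term contradicts your own correct earlier point: the monomial $S_a\ot\one\ot\D({\d^a}_{\s(1)\s(2)})$ is separately invariant (for primitive $Z$ the slotwise adjoint action on $\D(h)$ equals $\D([Z,h])$), so it absorbs nothing coming from other terms; it is present because of the Hochschild and cyclicity constraints, not for equivariance. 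Relatedly, the term $-\d^i_p\d^q_{jk}$ in $[Y^q_p,\d^i_{jk}]$ does not ``lower the order'' of a $\d$-generator; it transforms the upper adjoint index, and it is exactly the piece cancelled by the coadjoint action on the labels $S_a$ (resp.\ $S_{ab}$), while the frame-index pieces are cancelled by the character $\d(Y_i^j)=\d^j_i$ via \eqref{aux-gl_2-invariance-1}. With these corrections, your plan coincides with the paper's proof; your direct cross-check via \eqref{aux-property-S-tilde} and $S_\d(Y_i^j)=\d^j_i-Y_i^j$ is also sound and parallels the codimension-one argument of Subsection \ref{section-codim-1}.
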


\begin{proof}
We first simply observe that
\begin{align}\label{aux-gl_2-invariance-1}
 &\d^j_{\s(1)}B_i \ot B_{\s(2)}
  + \d^j_{\s(2)}B_{\s(1)} \ot B_i =
   \d_i^j \left(B_{\s(1)} \ot
B_{\s(2)}\right).
\end{align}

\ni Using  \eqref{aux-relations-CM}, in view of the action of
$g\ell_2$ on the Hopf algebra $\Hc$,
  the equivariancy condition follows from
\begin{align*}
 & \ad(Y_i^j)\left(\d^p_{qr} \ot
Y^q_p\right) = \d^j_r\left(\d^p_{qi} \ot Y^q_p\right), \\\notag &
\ad(Y_i^j)\left(\d^k_{tr} \ot \d^t_{ln}\right) =
\\ & \d^j_l\left(\d^k_{tr} \ot
\d^t_{in}\right) - \d^k_i\left(\d^j_{tr} \ot \d^t_{ln}\right) +
\d^j_r\left(\d^k_{ti} \ot \d^t_{ln}\right) + \d^j_n\left(\d^k_{tr}
\ot \d^t_{li}\right).
\end{align*}

\end{proof}

\ni We first observe that $\psi \in C^2_\Kc(\Ac,V)$ is a
 Hochschild cocycle on $V_2 \ot \Ac^{\ot\,n}$.

\begin{lemma}\label{lemma-bvp2=0}
For any $\a_i,\b_j,\g_k$, $(b\psi)_2=0$.
\end{lemma}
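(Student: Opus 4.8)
The plan is to prove that the Hochschild coboundary $b\psi$ vanishes when restricted to $V_2 \otimes \Ac^{\otimes n}$, where $V_2 = \Cb\langle R^{ab}\rangle$. The key structural observation is that under the Koszul coaction \eqref{aux-Koszul-K-coaction}, the top-degree piece $V_2$ is $\Kc$-\emph{coinvariant}, that is, $\nabla_K(R^{ab}) = 1 \otimes R^{ab}$. This means that when I compute the cyclic/coface operators on the $V_2$-component, the coaction contributes only the trivial group-like $1_\Hc$, so the twisting by $S(v\ns{-1})$ in the last coface operator $\Fd_{q+1}$ becomes trivial on this summand. Consequently, $\psi_2$ behaves on $V_2 \otimes \Ac^{\otimes 3}$ essentially like an ordinary (untwisted) Hopf-cyclic cochain built from the Connes--Moscovici characteristic map applied to the two-tensor $\d^a_{\s(1)} \otimes \d^b_{\s(2)}$ (antisymmetrized, with the coefficients $\a_1,\a_2$).

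First I would write out $(b\psi)_2$ explicitly. Since the only nonzero contribution to $b\psi$ restricted to $V_2$ comes from $\psi_2$ itself (the lower coaction components of $\psi_0,\psi_1$ cannot land in $V_2$ because the coaction lowers, rather than raises, the polynomial degree), I can compute $(b\psi)_2 = (b\psi_2)$. Then, using the explicit Leibniz rules \eqref{aux-D-d} for $\d^i_{jk}$ --- note that each $\d^a_{\s(i)}$ is \emph{primitive}, $\D(\d^a_k) = \d^a_k \otimes 1 + 1 \otimes \d^a_k$ --- the coface operators $\p_i$ from \eqref{aux-HC(A,V)} act through the derivation property. The crucial simplification is that the twisted antipode and the action of $S^{-1}(v\ns{-1})$ appearing in $\p_{n+1}$ reduce to $\ve$ on $V_2$ by coinvariance, so $b\psi_2$ is governed purely by the first-order Leibniz expansion of the two primitive generators $\d^a,\d^b$ against the trace $\tau$.

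The computation then reduces to an integration-by-parts cancellation. Expanding $b\psi_2$ term by term using $\chi_\tau^{\rm eq}$ from \eqref{aux-equivariant-map}, each coface splits the relevant $\d$-generator across a product $a_ia_{i+1}$ via its primitive coproduct, producing pairs of terms of the form $\tau(\cdots \d^a(a_i) \d^b(a_{i+1}) \cdots)$ with opposite signs from adjacent cofaces. I expect every such term to cancel against a partner, exactly as in the codimension-$1$ Hochschild-cocycle computation already carried out in Subsection~\ref{section-codim-1}, but now with the two primitive $\d$-generators in place of the single $\d_1$. The claim $(b\psi)_2 = 0$ should hold \emph{for arbitrary} $\a_1,\a_2$ because the primitivity of the $\d$'s makes the cancellation formal and coefficient-independent.

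The main obstacle, and the place demanding genuine care rather than bookkeeping, is verifying that no spillover into $V_2$ arises from $\psi_0$ or $\psi_1$ through the coaction-twisted last coface $\p_{n+1}$ and the cyclic operator $\tau$, which involve $S^{-1}(v\ns{-1})$; I must confirm via \eqref{aux-Koszul-K-coaction} that the coaction strictly preserves or lowers the $V_i$-grading, so that applying $S^{-1}(v\ns{-1})$ to an element of $V_0$ or $V_1$ never produces a $V_2$-valued contribution under the relevant operators. Once this grading-compatibility of the coaction with the cocyclic structure is established, the vanishing $(b\psi)_2 = 0$ follows from the primitivity of the $\d$-generators together with the integration-by-parts identity \eqref{aux-property-S-tilde}, independently of the choice of $\a_i, \b_j, \g_k$.
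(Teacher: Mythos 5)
Your argument is essentially the paper's own: the published proof is a one-liner observing that $(b\psi)_2$ vanishes because the ${\d^a}_k$ are derivations of $\Ac$ (equivalently, primitive, cf.\ \eqref{aux-D-d}), and your reduction to $\psi_2$ alone followed by the Leibniz-rule cancellation, valid for arbitrary $\a_1,\a_2$, is exactly that argument. Two slips in your write-up deserve correction, though neither is fatal. First, you twice state the grading behaviour of the Koszul coaction backwards: by \eqref{aux-Koszul-K-coaction} the coaction \emph{raises} (or preserves) the polynomial degree of the $V$-leg --- e.g.\ $\nabla_{K}(R^a) = 1 \ot R^a + Y_b \ot R^{ab}$ has a $V_2$-component --- so your proposed confirmation that ``applying $S^{-1}(v\ns{-1})$ to an element of $V_0$ or $V_1$ never produces a $V_2$-valued contribution'' is false; indeed $\psi_2$ genuinely contributes to $(b\psi)_1$ and $(b\psi)_0$, which is precisely the source of the $r$-terms in Lemmas \ref{lemma-bvp1=0} and \ref{lemma-bvp0=0}. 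What actually justifies your reduction (and what you state correctly at the outset) is only that $V_2$ is the \emph{top}-degree piece and hence coinvariant, $\nabla_{K}(R^{ab}) = 1 \ot R^{ab}$: evaluating $b\psi$ on $R^{ab}$ therefore never leaves $V_2$, so only $\psi_2$ enters, and no claim about $V_0,V_1$ is needed. Second, the cancellation of the last coface against the surviving Leibniz terms uses the traciality of $\tau$, i.e.\ $\tau(xy)=\tau(yx)$, rather than the integration-by-parts identity \eqref{aux-property-S-tilde}; the latter is needed only in the cyclicity computations, not for the Hochschild cocycle property proved here.
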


\begin{proof}
The result follows directly from the application of the Hochschild
coboundary map and the fact that ${\d^a}_{k}$ are derivations of
$\Ac$.
\end{proof}
\ni On the next move, we determine $\a_i$, $1 \leq i \leq 2$, in such a way that $\psi \in C^2_\Kc(\Ac,V)$ is a cyclic cocycle on $V_2 \ot \Ac^{\ot\,n}$.

\begin{lemma}\label{lemma-tauvp2=vp2}
For $\a_1 = \a_2$, we have $(\tau\psi)_2 = \psi_2$.
\end{lemma}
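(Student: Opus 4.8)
The plan is to exploit the triviality of the $\Kc$-coaction on $V_2$. By the Koszul coaction \eqref{aux-Koszul-K-coaction} one has $\nabla_K(R^{cd}) = 1 \ot R^{cd}$, so for $v = R^{cd}$ the components are $v\ns{-1} = 1_\Kc$ and $v\ns{0} = R^{cd}$. Hence the twist $S^{-1}(v\ns{-1})$ occurring in the cyclic operator of $C^2_\Kc(\Ac,V)$ acts on $\Ac$ as the identity, and on $V_2$ the cyclic operator reduces to a plain cyclic shift of the algebra arguments:
\begin{equation*}
(\tau\psi)_2(R^{cd}\ot a_0 \ot a_1 \ot a_2) = \psi_2(R^{cd}\ot a_2 \ot a_0 \ot a_1).
\end{equation*}
The lemma thereby reduces to showing that $\psi_2$ is invariant under this cyclic shift exactly when $\a_1 = \a_2$.

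First I would write $\psi_2$ out explicitly. Pairing $S_{ab}$ against $R^{cd}$ and unwinding $\chi^{\rm eq}_\tau$ through \eqref{aux-equivariant-map}, the evaluation on $R^{cd}$ becomes, after the $\s$-antisymmetrization, a sum $\a_1 P_1 + \a_2 P_2$ of Connes--Moscovici type expressions $\tau\big(a_0\,{\d^x}_i(a_1)\,{\d^y}_j(a_2)\big)$, where now $\tau$ denotes the canonical trace on $\Ac$. Evaluating instead on the shifted triple $(a_2, a_0, a_1)$ and moving the two derivations back onto $a_1, a_2$, I would use (i) the cyclicity of the trace and (ii) the integration-by-parts identity \eqref{aux-property-S-tilde}; since the first order generators are primitive by \eqref{aux-D-d}, one has $S_\d({\d^a}_k) = -{\d^a}_k$, whence $\tau({\d^a}_k(x)\,y) = -\tau(x\,{\d^a}_k(y))$. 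Throughout one uses that these generators commute, $[{\d^c}_i, {\d^d}_j] = 0$, by \eqref{aux-relations-CM}.

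The decisive bookkeeping concerns the Leibniz terms created by the integration by parts. Each such step splits into a \emph{first order} contribution, in which the surviving derivation lands on $a_2$, and a \emph{second order} contribution carrying a composite ${\d^c}_i{\d^d}_j$ on $a_1$. I expect the first order terms to reassemble so that the shifted $\a_1$-block returns $P_2$ and the shifted $\a_2$-block returns $P_1$ --- that is, the cyclic shift interchanges the two coefficients --- while the second order terms, combined via the commutativity of the $\d$'s, collect with total coefficient $(\a_2 - \a_1)$ into a single quantity $Q := \tau\big(a_0\,[{\d^c}_1{\d^d}_2 - {\d^c}_2{\d^d}_1](a_1)\,a_2\big)$. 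This would yield
\begin{equation*}
(\tau\psi)_2(R^{cd}\ot a_0 \ot a_1 \ot a_2) - \psi_2(R^{cd}\ot a_0 \ot a_1 \ot a_2) = (\a_2 - \a_1)\big(P_1 - P_2 + Q\big).
\end{equation*}
Since $P_1 - P_2 + Q$ is not identically zero, cyclic invariance forces $\a_1 = \a_2$; conversely, $\a_1 = \a_2$ annihilates both the coefficient-interchange discrepancy and the second order remainder, giving $(\tau\psi)_2 = \psi_2$. The only real obstacle is thus computational: tracking the second order derivation terms carefully and confirming that their net coefficient is precisely $\a_2 - \a_1$.
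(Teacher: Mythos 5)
Your proposal is correct and follows essentially the same route as the paper: the trivial Koszul coaction on $V_2$ reduces the cyclic operator to a plain shift, and then trace cyclicity plus the integration-by-parts identity \eqref{aux-property-S-tilde} (with $S_\d({\d^a}_k)=-{\d^a}_k$ since the ${\d^a}_k$ are primitive and killed by $\d$) swaps the roles of the $\a_1$- and $\a_2$-blocks while the second-order Leibniz terms, combined via $[{\d^a}_i,{\d^b}_j]=0$, carry total coefficient $\a_2-\a_1$. Your explicit discrepancy formula $(\tau\psi)_2-\psi_2=(\a_2-\a_1)\bigl(P_1-P_2+Q\bigr)$ is exactly the bookkeeping the paper's displayed computation performs implicitly, so nothing is missing.
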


\begin{proof}
By definition of the cyclic operator, we have
\begin{equation}\label{aux-vp2-cyclic}
\tau\psi(R^{ab} \ot a_0\ot a_1 \ot a_2) = \psi_2(R^{ab} \ot a_2 \ot a_0\ot a_1).
\end{equation}
Hence, by the integration by parts property \eqref{aux-property-S-tilde},
\begin{align}
\begin{split}
& t\psi(R^{ab} \ot a_0 \ot a_1\ot a_2) =\\
& \a_1 \tau\left(a_0(-{\d^a}_{\s(1)}){\d^b}_{\s(2)}(a_1)a_2\right) + \a_1 \tau\left( a_0{\d^b}_{\s(2)}(a_1)(-{\d^a}_{\s(1)})(a_2)\right) \\
& +  \a_2\tau\left( a_0 (-{\d^b}_{\s(1)}){\d^a}_{\s(2)}(a_1) a_2\right) + \a_2 \tau\left(a_0 {\d^a}_{\s(2)}(a_1) (-{\d^b}_{\s(1)})(a_2)\right) \\
& = \a_1 \tau\left(a_0{\d^b}_{\s(1)}(a_1)  {\d^a}_{\s(2)}(a_2)\right) + \a_2\tau\left(a_0  {\d^a}_{\s(1)}(a_1) {\d^b}_{\s(2)}(a_2)\right).
\end{split}
\end{align}
Therefore, $(\tau\psi)_2 = \psi_2$ if and only if $\a_1 = \a_2$.
\end{proof}
\ni  As a result  we set
 \begin{equation}\label{aux-alpha}
 \a_1 = \a_2 = r.
\end{equation}
\medskip

\ni On the next step, we find a constraint on $\b_j$'s
 such that $\psi$ is a Hochschild cocycle over $V_1 \ot \Ac^{\ot\,q+1}$.

\begin{lemma}\label{lemma-bvp1=0}
We have $(b\psi)_1=0$ if and only if $\{\b_j\,|\,1 \leq j \leq 9\}$ satisfy the system
\begin{align}\label{system-vp1-Hochschild}
\begin{split}
 \b_1-\b_3+\b_7+r &= 0 \\
 \b_3+\b_8+r &= 0 \\
 -\b_2-\b_6+\b_8 &= 0 \\
 \b_4-\b_5 &= 0 \\
 -\b_4-\b_6& = 0 \\
 -\b_5+\b_7 &= 0.
\end{split}
\end{align}
\end{lemma}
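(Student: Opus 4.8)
The plan is to compute the Hochschild coboundary $b\psi=\sum_{i=0}^{3}(-1)^i\p_i\psi=\p_0-\p_1+\p_2-\p_3$ on $R^b\ot a_0\ot a_1\ot a_2\ot a_3$ and to isolate its $V_1$-component. The first three faces multiply consecutive algebra elements while leaving $v=R^b$ untouched, so $\p_0,\p_1,\p_2$ only probe $\psi_1$: concretely $\p_0\psi$ applies the derivations appearing in \eqref{aux-vp-1} to the product $a_0a_1$, while $\p_1\psi$ applies them to $a_1a_2$ and $\p_2\psi$ to $a_2a_3$, and each of these I would expand through the coproduct Leibniz rules \eqref{aux-D-Y}, \eqref{aux-D-d}, \eqref{aux-D-X}. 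The rule $\D(X_k)=X_k\ot 1+1\ot X_k+{\d^a}_k\ot Y_a$ is precisely what generates the mixed $\delta$--$Y$ monomials that couple the various $\b_j$ to one another.

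The decisive face is $\p_3$, which invokes the Koszul $\Kc$-coaction \eqref{aux-Koszul-K-coaction}. Since $\nabla_K(R^b)=1\ot R^b+Y_c\ot R^{bc}$, this face splits as
\begin{align*}
\p_3\psi(R^b\ot a_0\ot a_1\ot a_2\ot a_3) &=\psi_1\big(R^b\ot (a_3a_0)\ot a_1\ot a_2\big)\\
&\quad+\psi_2\big(R^{bc}\ot (S^{-1}(Y_c)(a_3))\,a_0\ot a_1\ot a_2\big),
\end{align*}
and cocommutativity of $\Kc$ gives $S^{-1}(Y_c)=-Y_c$. Thus $(b\psi)_1$ is fed not only by $\psi_1$ but also by $\psi_2$, and this is exactly the mechanism by which the parameter $r$ of Lemma \ref{lemma-tauvp2=vp2} (the common value $\a_1=\a_2=r$ fixed in \eqref{aux-alpha}) enters the system. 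I would note in passing that $\psi_0$ makes no contribution, since the coaction of $R^b$ never lands in $V_0$, so only the $\b_j$ and $r$ survive.

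Having assembled all four faces, I would bring every resulting term to a common normal form using the integration-by-parts identity \eqref{aux-property-S-tilde} to strip $S_\d$-images off $a_0$, and then sort the monomials by operator type -- that is, by which of the derivations $X$, $\delta$, $Y$ acts on each of $a_0,a_1,a_2,a_3$ after antisymmetrization in $\s$. Demanding $b\psi=0$ forces the coefficient of each distinct normal-form monomial to vanish, and these coefficients are precisely the six linear expressions in $\b_1,\dots,\b_9$ (and $r$) recorded in \eqref{system-vp1-Hochschild}. \emph{The main obstacle is purely combinatorial bookkeeping:} each of the nine $\psi_1$-terms and the two $\psi_2$-terms fans out into several monomials under the coproduct, so the computation produces dozens of terms, and the care lies in organizing them consistently -- grouping by the derivations hitting each argument and tracking signs from the antisymmetrization $B_{\s(1)}\ot B_{\s(2)}$ and from $S^{-1}(Y_c)=-Y_c$ -- so as to land on the six-equation system without error.
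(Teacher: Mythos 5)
Your proposal is correct and follows essentially the same route as the paper's proof: expand the four cofaces $b\psi=\p_0-\p_1+\p_2-\p_3$ on $V_1$, split the last coface through the Koszul coaction $\nabla_{K}(R^a)=1\ot R^a+Y_b\ot R^{ab}$ of \eqref{aux-Koszul-K-coaction} so that $\psi_2$ (hence $r$, via $\a_1=\a_2=r$) enters with the sign coming from $S^{-1}(Y_b)=-Y_b$, then expand through the coproducts and equate the coefficients of the distinct monomials in $\Hc^{\ot 3}$ to obtain exactly the six equations of \eqref{system-vp1-Hochschild}. The one point to sharpen is your normalization step: no operator ever lands on $a_0$ in this computation, so the integration-by-parts identity \eqref{aux-property-S-tilde} is not what is needed; rather, the paper uses the tracial property of $\tau$ (to rotate $a_3$, resp. $Y_b(a_3)$, from the zeroth slot to the last) together with the faithfulness of $\tau$, which is precisely the fact implicitly underlying your claim that vanishing of $(b\psi)_1$ forces each monomial's coefficient to vanish.
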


\begin{proof}
Let us remind the reader that we have to use the Koszul coaction  \eqref{aux-Koszul-K-coaction} in the last coface
operator.
\begin{align}\label{aux-vp1-Hochschild}
\begin{split}
& b\psi(R^a \ot a_0\ot a_1 \ot a_2 \ot a_3) = \\
& \psi_1(R^a \ot a_0a_1 \ot a_2 \ot a_3) - \psi_1(R^a \ot a_0\ot a_1a_2 \ot a_3) \\
& + \psi_1(R^a \ot a_0\ot a_1 \ot a_2a_3) - \psi_1(R^a \ot a_3a_0\ot a_1 \ot a_2) \\
& + \psi_2(R^{ab} \ot Y_b(a_3)a_0\ot a_1 \ot a_2).
\end{split}
\end{align}
Therefore, as a result of the tracial property \cite[Thm. 6]{ConnMosc}
and the faithfullness \cite[(3.12)]{ConnMosc} of the trace, we have $(b\psi)_1 = 0$ if and only if
\begin{align}
\begin{split}
& \b_1 1\ot {\d^a}_{\s(1)} \ot {\d^b}_{\s(2)} \ot Y_b - \b_2 1\ot {\d^b}_{\s(1)} \ot Y_b \ot {\d^a}_{\s(2)} \\
& -\b_3 \left(1\ot {\d^a}_{\s(1)} \ot {\d^b}_{\s(2)} \ot Y_b + 1\ot {\d^b}_{\s(2)} \ot {\d^a}_{\s(1)} \ot Y_b\right) \\
& +\b_4 \left(1\ot Y_b \ot {\d^a}_{\s(1)} \ot {\d^b}_{\s(2)} + 1\ot Y_b \ot {\d^b}_{\s(2)} \ot {\d^a}_{\s(1)}\right) \\
& -\b_5 \left(1\ot {\d^a}_{\s(1)} \ot Y_b \ot {\d^b}_{\s(2)} + 1\ot Y_b \ot {\d^a}_{\s(1)} \ot {\d^b}_{\s(2)}\right) \\
& -\b_6\left(1\ot {\d^b}_{\s(1)} \ot Y_b \ot {\d^a}_{\s(2)} + 1\ot Y_b \ot {\d^b}_{\s(1)} \ot {\d^a}_{\s(2)}\right) \\
& +\b_7\left(1\ot {\d^a}_{\s(1)} \ot {\d^b}_{\s(2)} \ot Y_b + 1\ot {\d^a}_{\s(1)} \ot Y_b \ot {\d^b}_{\s(2)}\right) \\
& +\b_8\left(1\ot {\d^b}_{\s(1)} \ot {\d^a}_{\s(2)} \ot Y_b + 1\ot {\d^b}_{\s(1)} \ot Y_b \ot {\d^a}_{\s(2)}\right) \\
& + r 1 \ot {\d^a}_{\s(1)} \ot {\d^b}_{\s(2)} \ot Y_b + r 1 \ot {\d^b}_{\s(1)} \ot {\d^a}_{\s(2)} \ot Y_b = 0.
\end{split}
\end{align}
In other words,
\begin{align*}
& \left(\b_1-\b_3+\b_7+k\right) 1\ot {\d^a}_{\s(1)} \ot {\d^b}_{\s(2)} \ot Y_b + (\b_3+\b_8+k) 1 \ot {\d^b}_{\s(1)} \ot {\d^a}_{\s(2)} \ot Y_b \\
& + (-\b_2-\b_6+\b_8) 1\ot {\d^b}_{\s(1)} \ot Y_b \ot {\d^a}_{\s(2)} + (\b_4-\b_5) 1\ot Y_b \ot {\d^a}_{\s(1)} \ot {\d^b}_{\s(2)}\\
& + (-\b_4-\b_6) 1\ot Y_b \ot {\d^b}_{\s(1)} \ot {\d^a}_{\s(2)} + (-\b_5+\b_7) 1\ot {\d^a}_{\s(1)} \ot Y_b \ot {\d^b}_{\s(2)} = 0.
\end{align*}
Accordingly we get the system \eqref{system-vp1-Hochschild}.
\end{proof}
 \ni As a result we set $\b_j$, $1\le j\le 9$  to satisfy  \eqref{system-vp1-Hochschild}.

 \medskip

\ni On the next step we determine $\b_j$'s in such a way that $\psi
\in C^2_\Kc(\Ac,V)$ is a cyclic  cocycle over $(V_1\oplus V_2) \ot
\Ac^{\ot\,n}$.

\begin{lemma}\label{lemma-tauvp1=vp1}
We have $(\tau\psi)_1=\psi_1$ if and only if $\{\b_j\,|\,1 \leq j \leq 9\}$, satisfy
\begin{align}\label{aux-beta}
\begin{split}
& \b_1=\b_2=-r, \quad \b_3=\b_4=\b_5=-\b_6=\b_7=s, \\
& \b_8=-r-s, \quad \b_9 = \frac{1}{2}r+s.
\end{split}
\end{align}
\end{lemma}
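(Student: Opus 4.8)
The plan is to compute the left-hand side $(\tau\psi)_1$ directly from the definition of the cyclic operator on $C^2_\Kc(\Ac,V)$ and then match it against $\psi_1$ coefficient by coefficient, in close parallel with the proof of Lemma \ref{lemma-bvp1=0}. The conceptual point is that the Koszul coaction \eqref{aux-Koszul-K-coaction} does not preserve the filtration $V_0\oplus V_1\oplus V_2$: for $v=R^a$ one has $v\ns{-1}\ot v\ns{0} = 1\ot R^a + Y_b\ot R^{ab}$, so that
\begin{align*}
(\tau\psi)_1(R^a\ot a_0\ot a_1\ot a_2) &= \psi_1(R^a\ot a_2\ot a_0\ot a_1) \\
&\quad + \psi_2\big(R^{ab}\ot S^{-1}(Y_b)(a_2)\ot a_0\ot a_1\big).
\end{align*}
Thus the $V_1$-cyclicity equation receives a cross-term fed by $\psi_2$, which is precisely how the parameter $r=\a_1=\a_2$ of \eqref{aux-alpha} enters the final answer. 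Since $Y_b$ is primitive and $\Kc$ is cocommutative, $S^{-1}(Y_b)=-Y_b$, and the cross-term becomes $-\psi_2(R^{ab}\ot Y_b(a_2)\ot a_0\ot a_1)$.

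Next I would bring every resulting expression into the normal form $\tau\big(a_0\,h^1(a_1)\,h^2(a_2)\big)$. Evaluating $\chi_\tau^{\rm eq}$ on the rotated arguments produces traces such as $\tau(a_2\,{\d^a}_{\s(1)}(a_0)\,X_{\s(2)}(a_1))$; using the trace property of $\tau$ together with the integration-by-parts rule \eqref{aux-property-S-tilde} and the coproducts \eqref{aux-D-Y}, \eqref{aux-D-d}, \eqref{aux-D-X}, I would move the operators off $a_0$ and re-collect. Here one must carefully track the correction terms generated by the non-primitive coproduct $\D(X_k)=X_k\ot 1+1\ot X_k+\d^i_{jk}\ot Y_i^j$ of \eqref{aux-D-X}, as well as the identity \eqref{aux-gl_2-invariance-1} used to simplify the $Y_b$-contractions. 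After this reduction, both $(\tau\psi)_1$ and $\psi_1$ are written as linear combinations of the same monomial basis of $V_1^\ast\ot\Hc^{\ot 2}$ appearing in \eqref{aux-vp-1}, now with coefficients that are linear in the $\b_j$ and in $r$.

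Imposing $(\tau\psi)_1=\psi_1$ then amounts to matching these coefficients monomial-by-monomial, which by the faithfulness of $\tau$ \cite[(3.12)]{ConnMosc} yields a linear system in $\b_1,\dots,\b_9$ and $r$. Solving this system simultaneously with the Hochschild constraints \eqref{system-vp1-Hochschild} already established in Lemma \ref{lemma-bvp1=0} determines the $\b_j$ as functions of the free parameters $r$ and $s$, producing exactly the values recorded in \eqref{aux-beta}.

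The main obstacle is the bookkeeping rather than any single conceptual difficulty: there are nine coefficients $\b_j$ together with the parameters $r,s$, and the cyclic rotation followed by repeated integration by parts generates a large number of monomials, each carrying signs from the antipode and the $\d$-twist $S_\d$ and additional lower-order terms coming from $\D(X_k)$. Care is also needed because the present cyclicity system and \eqref{system-vp1-Hochschild} together over-determine the $\b_j$, so part of the work is to check that the two systems are consistent and that the displayed solution \eqref{aux-beta} indeed satisfies both.
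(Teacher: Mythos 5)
Your proposal follows the paper's proof essentially verbatim: the paper likewise starts from the Koszul coaction cross-term $t\psi(R^a \ot a_0\ot a_1 \ot a_2) = \psi_1(R^a \ot a_2 \ot a_0\ot a_1) - \psi_2(R^{ab} \ot Y_b(a_2) \ot a_0\ot a_1)$, reduces everything to the standard form $\tau\big(a_0\,h^1(a_1)\,h^2(a_2)\big)$ via the trace property and integration by parts \eqref{aux-property-S-tilde}, matches monomials to obtain the linear system \eqref{system-vp1-cyclic}, and solves it jointly with \eqref{system-vp1-Hochschild} to arrive at \eqref{aux-beta}. Your observation that the statement's ``if and only if'' is really relative to the standing Hochschild constraints, and that consistency of the two over-determined systems must be checked, is exactly how the paper's argument is structured.
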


\begin{proof}
By the Koszul coaction, we have
\begin{equation*}
t\psi(R^a \ot a_0\ot a_1 \ot a_2) = \psi_1(R^a \ot a_2 \ot a_0\ot a_1) - \psi_2(R^{ab} \ot Y_b(a_2) \ot a_0\ot a_1).
\end{equation*}
Accordingly,
\begin{align}
\begin{split}
& t\psi(R^a \ot a_0 \ot a_1 \ot a_2) = \\
& \b_1\tau\left( {\d^a}_{\s(1)}(a_0) X_{\s(2)}(a_1)a_2\right) + \b_2\tau\left( X_{\s(1)}(a_0) {\d^a}_{\s(2)}(a_1)a_2\right) \\
& + \b_3\tau\left( {\d^a}_{\s(1)}{\d^b}_{\s(2)}(a_0) Y_b(a_1)a_2\right) + \b_4\tau\left( Y_b(a_0) {\d^a}_{\s(1)}{\d^b}_{\s(2)}(a_1) a_2\right) \\
&  + \b_5\tau\left( {\d^a}_{\s(1)}Y_b(a_0) {\d^b}_{\s(2)}(a_1)a_2\right) + \b_6\tau\left( {\d^b}_{\s(1)}Y_b(a_0) {\d^a}_{\s(2)}(a_1)a_2\right) \\
& + \b_7\tau\left( {\d^a}_{\s(1)}(a_0) {\d^b}_{\s(2)}Y_b(a_1)a_2\right) + \b_8 \tau\left(  {\d^b}_{\s(1)}(a_0) {\d^a}_{\s(2)}Y_b(a_1)a_2\right) \\
&+ \b_9\tau\left( \D\left({\d^a}_{\s(1)\s(2)}\right)(a_0 \ot a_1)a_2\right) - r\tau\left( {\d^a}_{\s(1)}(a_0) {\d^b}_{\s(2)}(a_1) Y_b(a_2)\right) \\
&   - r\tau\left( {\d^b}_{\s(1)}(a_0) {\d^a}_{\s(2)}(a_1) Y_b(a_2)\right).
\end{split}
\end{align}
Hence $(t\psi)_1 = \psi_1$ if and only if
\begin{align*}
& \b_1\left(- 1 \ot {\d^a}_{\s(1)}X_{\s(2)} \ot 1 - 1 \ot X_{\s(2)} \ot {\d^a}_{\s(1)}\right) + \\
& \b_2\big(- 1 \ot X_{\s(1)} {\d^a}_{\s(2)} \ot 1 - 1 \ot {\d^a}_{\s(2)} \ot X_{\s(1)} + 1 \ot {\d^b}_{\s(1)} {\d^a}_{\s(2)} Y_b \ot 1 \\
& - 2 \cdot 1 \ot {\d^a}_{\s(1)\s(2)} \ot 1 + 1 \ot  {\d^a}_{\s(2)} \ot {\d^b}_{\s(1)}Y_b + 1 \ot {\d^a}_{\s(2)}Y_b \ot {\d^b}_{\s(1)} \\
& - 1 \ot \D\left({\d^a}_{\s(1)\s(1)}\right) \big) + \\
& \b_3\big(1 \ot {\d^a}_{\s(1)}{\d^b}_{\s(2)} Y_b \ot 1 + 1 \ot Y_b \ot {\d^a}_{\s(1)}{\d^b}_{\s(2)} \\
& + 1 \ot {\d^a}_{\s(1)} Y_b \ot {\d^b}_{\s(2)} + 1 \ot {\d^b}_{\s(2)} Y_b \ot {\d^a}_{\s(1)} \big) + \\
& \b_4\big(-1 \ot {\d^a}_{\s(1)}{\d^b}_{\s(2)}Y_b \ot 1 + 2 \cdot 1 \ot {\d^a}_{\s(1)\s(2)} \ot 1 -1 \ot {\d^a}_{\s(1)}{\d^b}_{\s(2)}\ot Y_b\big) + \\
& \b_5\big(1 \ot {\d^a}_{\s(1)}{\d^b}_{\s(2)}Y_b \ot 1 - 2 \cdot 1 \ot {\d^a}_{\s(1)\s(2)} \ot 1 + 1 \ot {\d^a}_{\s(1)}{\d^b}_{\s(2)}\ot Y_b+ \\
& + 1 \ot {\d^b}_{\s(2)}Y_b \ot {\d^a}_{\s(1)} + 1 \ot {\d^b}_{\s(2)} \ot {\d^a}_{\s(1)}Y_b + 1 \ot \D\left({\d^a}_{\s(1)\s(2)}\right) \big) + \\
& \b_6\big(1 \ot {\d^b}_{\s(1)}{\d^a}_{\s(2)}Y_b \ot 1 - 2 \cdot 1 \ot {\d^a}_{\s(1)\s(2)} \ot 1+ 1 \ot {\d^b}_{\s(1)}{\d^a}_{\s(2)}\ot Y_b\\
& + 1 \ot {\d^a}_{\s(2)}Y_b \ot {\d^b}_{\s(1)} - 1 \ot \D\left({\d^a}_{\s(1)\s(2)}\right) + 1 \ot {\d^a}_{\s(2)} \ot {\d^b}_{\s(1)}Y_b\big) +\\
& \b_7 \left(- 1 \ot {\d^a}_{\s(1)} {\d^b}_{\s(2)}Y_b \ot 1 - 1 \ot  {\d^b}_{\s(2)}Y_b \ot {\d^a}_{\s(1)}\right) +\\
& \b_8\left(- 1 \ot  {\d^b}_{\s(1)} {\d^a}_{\s(2)}Y_b \ot 1 -  1 \ot {\d^a}_{\s(2)}Y_b \ot {\d^b}_{\s(1)}\right) + \\
& \b_9\left(-2 \cdot 1 \ot {\d^a}_{\s(1)\s(2)} \ot 1 - 1 \ot \D\left({\d^a}_{\s(1)\s(2)}\right)\right) \\
&  + r1\ot  {\d^b}_{\s(2)} \ot {\d^a}_{\s(1)} Y_b + r 1 \ot {\d^a}_{\s(2)} \ot {\d^b}_{\s(1)} Y_b \\
& = \b_11\ot {\d^a}_{\s(1)} \ot X_{\s(2)} +\b_2 1\ot X_{\s(1)} \ot {\d^a}_{\s(2)} + \b_31\ot {\d^a}_{\s(1)}{\d^b}_{\s(2)} \ot Y_b \\
& \b_41\ot Y_b \ot {\d^a}_{\s(1)}{\d^b}_{\s(2)}  + \b_51\ot {\d^a}_{\s(1)}Y_b \ot {\d^b}_{\s(2)}+\b_6 1\ot {\d^b}_{\s(1)}Y_b \ot {\d^a}_{\s(2)} \\
&  + \b_71\ot {\d^a}_{\s(1)} \ot {\d^b}_{\s(2)}Y_b + \b_8 1\ot {\d^b}_{\s(1)} \ot {\d^a}_{\s(2)}Y_b + \b_9 1 \ot \D\left({\d^a}_{\s(1)\s(2)}\right).
\end{align*}
Collecting the terms, we obtain the equations
\begin{align}\label{system-vp1-cyclic}
\begin{split}
 \b_1-\b_2 & = 0 \\
 \b_2-\b_3+\b_4-\b_5+\b_6+\b_7-\b_8 &= 0 \\
 \b_1-2\b_2+2\b_4-2\b_5-2\b_6-2\b_9 &= 0 \\
 \b_2+\b_6+\b_7+k &=0 \\
 \b_2-\b_3+\b_5+\b_6-\b_8&=0 \\
 \b_5+\b_8+k &=0 \\
 \b_3+\b_5+\b_6-\b_7&=0\\
 \b_3-\b_4&=0\\
 -\b_2+\b_5-\b_6-2\b_9&=0.
\end{split}
\end{align}
Solving the systems \eqref{system-vp1-Hochschild} and \eqref{system-vp1-cyclic}
we obtain \eqref{aux-beta}.
\end{proof}
\ni As a result of Lemma \ref{system-vp1-Hochschild} we set $\b_j$,
$1\le j\le 9$ to satisfy \eqref{aux-beta}.

\medskip

\ni Finally we determine $\g_k$, $1\leq k \leq 4$ such that $\psi \in
C^2_\Kc(\Ac,V)$ is a Hochschild cocycle.

\begin{lemma}\label{lemma-bvp0=0}
We have $(b\psi)_0=0$ if and only if $\{\g_k\,|\,1 \leq k \leq 4\}$ satisfy
\begin{equation}\label{aux-gamma}
\g_1 = \g_2 = r, \quad \g_3 = \g_4 = s.
\end{equation}
\end{lemma}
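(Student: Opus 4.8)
The plan is to compute $(b\psi)_0 := (b\psi)\big|_{V_0 \ot \Ac^{\ot 4}}$ directly from $b = \sum_{i=0}^{3}(-1)^i\p_i$, in the same spirit as the proof of Lemma~\ref{lemma-bvp1=0}. The structural point that drives everything is that the first three cofaces $\p_0,\p_1,\p_2$ only merge adjacent arguments and thus feed $\psi_0$ back into itself — expanding the operators on the merged products via the coproducts \eqref{aux-D-X}, \eqref{aux-D-d}, \eqref{aux-D-Y} and the fact that the ${\d^a}_{k}$ are derivations — whereas the last coface $\p_3$ invokes the Koszul coaction on $1\in V_0$. Since
\[
\nabla_K(1) = 1\ot 1 + Y_a\ot R^a + \tfrac{1}{2}Y_aY_b\ot R^{ab},
\]
the operator $\p_3$ splits $(b\psi)_0$ into a purely $\psi_0$ contribution (from $1\ot 1$), a $\psi_1$ contribution (from $Y_a\ot R^a$), and a $\psi_2$ contribution (from $\tfrac12 Y_aY_b\ot R^{ab}$). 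Hence $(b\psi)_0$ genuinely couples the unknowns $\g_k$ to the coefficients $\a_i=r$ and $\b_j$ already fixed by \eqref{aux-alpha} and \eqref{aux-beta}, which is exactly why the answer \eqref{aux-gamma} is forced to be expressed through $r$ and $s$.

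Next I would substitute the explicit forms of $\psi_0,\psi_1,\psi_2$ and normalize every summand to the bookkeeping format of \eqref{aux-vp1-Hochschild}: using the integration-by-parts identity \eqref{aux-property-S-tilde}, the $\d$-invariance of $\tau$, and the tracial and faithfulness properties of $\tau$ cited from \cite{ConnMosc}, one moves all operators off $a_0$ so that each term reads as a scalar times $\tau$ of a product with $a_0$ bare and the remaining operators acting on $a_1,a_2,a_3$. Here the decisive mechanism is the $\d\ot Y$ tail of $\D(X_k)$ in \eqref{aux-D-X}: the pure $X_{\s(1)}\ot X_{\s(2)}$ word weighted by $\g_1$ spawns $\d$- and $Y$-laden words under $\p_1,\p_2$, and these must be balanced against the $\g_2,\g_3,\g_4$ words and against the graded $\p_3$-tails supplied by $\psi_1$ and $\psi_2$.

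Finally, invoking faithfulness of $\tau$ I would equate the coefficient of each linearly independent tensor word in $X,\d,Y$ to zero. This produces a linear system in $\g_1,\g_2,\g_3,\g_4$ whose right-hand data are the fixed scalars $r$ and $s$ entering through the $\psi_1,\psi_2$ contributions; its unique solution is \eqref{aux-gamma}. Conversely, feeding \eqref{aux-gamma} back in makes every collected coefficient vanish, which gives the reverse implication and settles the ``if and only if''.

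I expect the single obstacle to be combinatorial rather than conceptual. Because $\p_3$ mixes the three graded pieces, one must correctly propagate the second-order operators $Y_bY_a=S^{-1}(Y_aY_b)$ coming from $\tfrac12 Y_aY_b\ot R^{ab}$ through \eqref{aux-D-X} and through the Bianchi-type identity, while keeping the antisymmetrization convention $B_{\s(1)}\odots B_{\s(q)}$ consistent when collecting terms. Correctly identifying the linearly independent words of $\Hc^{\ot 3}$ — and checking that the $\psi_1,\psi_2$ tails reorganize precisely into multiples of $r$ and $s$ rather than into independent constraints — is where the care is required.
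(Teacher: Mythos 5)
Your proposal is correct and follows essentially the same route as the paper's proof: expand $(b\psi)_0$ with the three merging cofaces feeding $\psi_0$ into itself and the last coface $\p_3$ governed by the Koszul coaction on $1\in V_0$ (so that $\psi_1$ and $\psi_2$ enter with the already-fixed coefficients $r$ and $s$), then normalize to the standard form and use faithfulness of $\tau$ to equate coefficients of linearly independent words, which forces \eqref{aux-gamma}. The only cosmetic difference is that in this lemma the normalization needs just the tracial property of $\tau$ (no operator ever lands on $a_0$, since the $\p_3$-terms carry $Y_a(a_3)a_0$ with the operator on $a_3$), so the integration-by-parts identity \eqref{aux-property-S-tilde} and the Bianchi-type identity you mention are not actually invoked.
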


\begin{proof}
Using the  Koszul coaction \eqref{aux-Koszul-K-coaction}, we have
\begin{align*}\label{aux-vp0-Hochschild}
& b\psi(1 \ot a_0\ot a_1 \ot a_2 \ot a_3) = \\
& \psi_0(1 \ot a_0a_1 \ot a_2 \ot a_3) - \psi_0(1 \ot a_0\ot a_1a_2 \ot a_3) \\
& + \psi_0(1 \ot a_0\ot a_1 \ot a_2a_3) - \psi_0(1 \ot a_3a_0\ot a_1 \ot a_2) \\
& + \psi_1(R^a \ot Y_a(a_3)a_0\ot a_1 \ot a_2) - \frac{1}{2!}\psi_2(R^{ab} \ot Y_bY_a(a_3)a_0\ot a_1 \ot a_2).
\end{align*}
As a result,  $(b\psi)_0 = 0$ if and only if
\begin{align*}
& \g_1 \Big(- 1\ot {\d^a}_{\s(1)} \ot Y_a \ot X_{\s(2)} + 1\ot X_{\s(1)} \ot {\d^a}_{\s(2)} \ot Y_a\Big) + \\
& \g_2 \Big(1\ot {\d^a}_{\s(1)} \ot X_{\s(2)} \ot Y_a + 1\ot {\d^a}_{\s(1)} \ot Y_a \ot  X_{\s(2)}  \\
& + 1\ot {\d^a}_{\s(1)} \ot {\d^b}_{\s(2)}Y_a \ot Y_b + 1\ot {\d^a}_{\s(1)} \ot {\d^b}_{\s(2)} \ot Y_bY_a\Big) + \\
& \g_3 \Big(- 1\ot {\d^a}_{\s(1)}{\d^b}_{\s(2)} \ot Y_b \ot Y_a  - 1\ot Y_b \ot {\d^a}_{\s(1)}{\d^b}_{\s(2)} \ot Y_a
\end{align*}
\begin{align}
\begin{split}
& - 1\ot {\d^a}_{\s(1)}Y_b \ot {\d^b}_{\s(2)} \ot Y_a - 1\ot {\d^a}_{\s(1)} \ot {\d^b}_{\s(2)}Y_b \ot Y_a \\
& - 1\ot {\d^b}_{\s(2)}Y_b \ot {\d^a}_{\s(1)} \ot Y_a - 1\ot {\d^b}_{\s(2)} \ot {\d^a}_{\s(1)}Y_b \ot Y_a \Big) +\\
& -\g_4 1 \ot \D({\d^a}_{\s(1)\s(2)}) \ot Y_a -r 1\ot {\d^a}_{\s(1)} \ot X_{\s(2)} \ot Y_a\\
&  + -r 1\ot X_{\s(1)} \ot {\d^a}_{\s(2)} \ot Y_a +s 1\ot {\d^a}_{\s(1)}{\d^b}_{\s(2)} \ot Y_b \ot Y_a \\
& + s 1\ot Y_b \ot {\d^a}_{\s(1)}{\d^b}_{\s(2)} \ot Y_a + s 1\ot {\d^a}_{\s(1)}Y_b \ot {\d^b}_{\s(2)} \ot Y_a\\
& -s 1\ot {\d^b}_{\s(1)}Y_b \ot {\d^a}_{\s(2)} \ot Y_a + s 1\ot {\d^a}_{\s(1)} \ot {\d^b}_{\s(2)}Y_b \ot Y_a \\
& + (-r-s) 1\ot {\d^b}_{\s(1)} \ot {\d^a}_{\s(2)}Y_b \ot Y_a + (\frac{r}{2}+s) 1 \ot \D\left({\d^a}_{\s(1)\s(2)}\right) \ot Y_a \\
& -r  1\ot {\d^a}_{\s(1)} \ot {\d^b}_{\s(2)} \ot Y_bY_a - \frac{r}{2} 1 \ot \D\left({\d^a}_{\s(1)\s(2)}\right) \ot Y_a = 0.
\end{split}
\end{align}
Hence we obtain \eqref{aux-gamma}.
\end{proof}

\begin{proposition}\label{prop-psi}
The cochain  $\psi: V \ot \Ac^{\ot\,3} \to \Cb$ is a
 cyclic 2-cocycle if and only if $\{\a_i\,|\,1 \leq i \leq 2\}$, satisfy \eqref{aux-alpha}, $\{\b_j\,|\,
1 \leq j \leq 9\}$, fulfill  \eqref{aux-beta},
   and $\{\g_k\,|\,1 \leq k \leq 4\}$ satisfy \eqref{aux-gamma}. The
   resulting  cocycle is then a SAYD-twisted cyclic cocycle.
\end{proposition}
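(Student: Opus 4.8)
The plan is to recognize the proposition as the assembly of the six preceding lemmas, organized by the grading $V = V_0\oplus V_1\oplus V_2$. The structural fact that makes this work is that the Koszul $\Kc$-coaction \eqref{aux-Koszul-K-coaction} is triangular for this grading: $V_2$ carries the trivial coaction, $V_1\oplus V_2$ is a subcomodule, and in general $\nabla_K$ sends $V_j$ into $\Kc\ot(V_j\oplus\cdots\oplus V_2)$. Since both the Hochschild coboundary $b$ and the cyclic operator $\tau$ feel the coaction only through the last coface and the cyclic map, they inherit this triangularity. Consequently the two defining conditions of a cyclic $2$-cocycle, namely $b\psi=0$ and $(\Id-\tau)\psi=0$, split into their homogeneous pieces $(b\psi)_i=0$ and $(\tau\psi)_i=\psi_i$ for $i=0,1,2$, and these can be imposed successively, beginning with $V_2$ (where the coaction is trivial) and descending to $V_0$.

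Concretely, I would first record that $\psi\in C^2_\Kc(\Ac,V)$ for arbitrary coefficients by Lemma \ref{lemma-psi-eq}. On $V_2$ the relation $(b\psi)_2=0$ holds for free (Lemma \ref{lemma-bvp2=0}), while $(\tau\psi)_2=\psi_2$ forces $\a_1=\a_2=r$, that is \eqref{aux-alpha} (Lemma \ref{lemma-tauvp2=vp2}). Passing to $V_1$, the coaction couples $\psi_1$ to $\psi_2$: imposing $(b\psi)_1=0$ gives the system \eqref{system-vp1-Hochschild} (Lemma \ref{lemma-bvp1=0}) and imposing $(\tau\psi)_1=\psi_1$ gives \eqref{system-vp1-cyclic}, and solving the two systems simultaneously produces the values \eqref{aux-beta} (Lemma \ref{lemma-tauvp1=vp1}). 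Finally, on $V_0$ the coaction couples $\psi_0$ to both $\psi_1$ and $\psi_2$, and the requirement $(b\psi)_0=0$ then pins down $\g_1=\g_2=r$, $\g_3=\g_4=s$, i.e. \eqref{aux-gamma} (Lemma \ref{lemma-bvp0=0}). At this point every coefficient is expressed through the two free parameters $r,s$.

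The one condition not covered by an earlier lemma is the cyclicity on the top piece, $(\tau\psi)_0=\psi_0$. Because $\g$ has already been fixed by $(b\psi)_0=0$, this is an \emph{over-determined} consistency requirement, and the genuine content of the proposition is that it is automatically met by the coefficients \eqref{aux-alpha}, \eqref{aux-beta}, \eqref{aux-gamma}. I would verify it by a computation entirely parallel to Lemma \ref{lemma-tauvp1=vp1}: expand $\tau\psi$ on $1\in V_0$ using the full coaction $\nabla_K(1)=1\ot 1+Y_a\ot R^a+\frac{1}{2}Y_aY_b\ot R^{ab}$ together with the integration-by-parts property \eqref{aux-property-S-tilde}, collect the resulting operator identities, and confirm that they collapse to tautologies after substituting \eqref{aux-alpha}--\eqref{aux-gamma}. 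This final bookkeeping, along with the mutual compatibility of \eqref{system-vp1-Hochschild} and \eqref{system-vp1-cyclic} recorded in Lemma \ref{lemma-tauvp1=vp1}, is where I expect the only real (purely computational) difficulty to lie; everything else is assembly.

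Once $\psi$ is established as a cyclic $2$-cocycle, the closing clause is immediate: $V=S(\Fg_0^\ast)_{[2n]}$ is a (unimodular) SAYD module over $\Kc=U(\Fg_0)$ by Example \ref{example-symm-alg-SAYD}, so a cyclic cocycle in $C^2_\Kc(\Ac,V)$ is by definition a SAYD-twisted cyclic cocycle.
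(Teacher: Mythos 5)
Your proposal is correct and follows the paper's proof essentially verbatim: the paper likewise splits $b\psi=0$ and $\tau\psi=\psi$ into the graded components along $V=V_0\oplus V_1\oplus V_2$, invokes Lemmas \ref{lemma-psi-eq}, \ref{lemma-bvp2=0}, \ref{lemma-tauvp2=vp2}, \ref{lemma-bvp1=0}, \ref{lemma-tauvp1=vp1}, \ref{lemma-bvp0=0} for five of the six conditions, and identifies $(\tau\psi)_0=\psi_0$ as the one remaining requirement, which holds automatically under \eqref{aux-alpha}, \eqref{aux-beta}, \eqref{aux-gamma}. The only difference is that the paper actually executes that last verification --- the lengthy integration-by-parts computation organized in \eqref{aux-box-1}--\eqref{aux-box-4} and \eqref{aux-sum-1}--\eqref{aux-sum-4} --- which you describe accurately (coaction expansion of $\tau\psi$ on $1\in V_0$ plus \eqref{aux-property-S-tilde}) but do not carry out.
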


\begin{proof}
We note that  $\psi$ is a Hochschild cocycle, \ie  $b\psi =
(b\psi)_0+(b\psi)_1+(b\psi)_2 =0$,  if and only if
  $(b\psi)_t =0$, $t=0,1,2$. We see that $(b\psi)_2=0$ via  Lemma
  \ref{lemma-bvp2=0},  $(b\psi)_1=0$ via  Lemma
  \ref{lemma-bvp1=0}, $(b\psi)_0=0$ via  Lemma
  \ref{lemma-bvp0=0}.

  \medskip

  \ni On the other hand $\psi$ is cyclic, \ie  $\tau\psi =\psi$,  if and only if
   $(\tau\psi)_t=\psi_t$, $t = 0,1,2$. Indeed, for $t=1$ Lemma \ref{lemma-tauvp1=vp1},
for $t=2$ Lemma \ref{lemma-tauvp2=vp2} yields the claims. As for $t=0$ we have
\begin{align*}
& t\psi(1 \ot a_0\ot a_1 \ot a_2) = \psi_0(1 \ot a_2 \ot a_0\ot a_1) - \psi_1(R^a \ot Y_a(a_2) \ot a_0\ot a_1) \\\notag
& +\frac{1}{2!}\psi_2(R^{ab} \ot Y_bY_a(a_2) \ot a_0\ot a_1).
\end{align*}
Accordingly,
\begin{align*}
& t\psi(1 \ot a_0\ot a_1 \ot a_2) =
\eqref{aux-box-1}+\eqref{aux-box-2}+
\eqref{aux-box-3}+\eqref{aux-box-4}.
\end{align*}
\FBOX{12}{
\begin{align}\label{aux-box-1}
\begin{split}
 &r\tau\Big(X_{\s(1)}(a_0)
X_{\s(2)}(a_1)a_2\Big) +
 r\tau\Big({\d^a}_{\s(1)} (a_0) X_{\s(2)}Y_a (a_1)a_2 \Big) \\
& + s\tau\Big({\d^a}_{\s(1)}\d^b_{\s(2)}Y_b(a_0) Y_a(a_1)a_2\Big) +
 s\tau\Big({\d^a}_{\s(1)\s(2)} (a_0)Y_a (a_1)a_2\Big)
\end{split}
 \end{align}
 }

\FBOX{14}{
 \begin{align}\label{aux-box-2}
 \begin{split}
&  r\tau\Big({\d^a}_{\s(1)}(a_0) X_{\s(2)}(a_1) Y_a(a_2)\Big) + r\tau\Big( X_{\s(1)} (a_0) {\d^a}_{\s(2)}(a_1) Y_a(a_2)\Big)\\
& - s\tau\Big({\d^a}_{\s(1)}\d^b_{\s(2)} (a_0) Y_b (a_1) Y_a(a_2)\Big) -s\tau\Big( Y_b (a_0) {\d^a}_{\s(1)}\d^b_{\s(2)} (a_1) Y_a(a_2)\Big) \\
&  +(r+s) \tau\Big(  \d^b_{\s(1)}(a_0) {\d^a}_{\s(2)}Y_b(a_1) Y_a(a_2) \Big)-s\tau\Big( {\d^a}_{\s(1)}Y_b(a_0) \d^b_{\s(2)} (a_1) Y_a (a_2)\Big)\\
& +s\tau\Big( \d^b_{\s(1)}Y_b (a_0) {\d^a}_{\s(2)} (a_1) Y_a (a_2))-s\tau\Big( {\d^a}_{\s(1)} (a_0) \d^b_{\s(2)}Y_b(a_1) Y_a(a_2)\Big) \\
\end{split}
\end{align}}

\FBOX{10}{
 \begin{equation}\label{aux-box-3}
(- \frac{r}{2}-s)\tau\Big(  \D\left({\d^a}_{\s(1)\s(2)}\right)(a_0\ot a_1) Y_a(a_2)\Big) \\
\end{equation}}

\FBOX{14}{
 \begin{equation}\label{aux-box-4}
 \frac{r}{2!} \tau\Big( {\d^a}_{\s(1)}(a_0) \d^b_{\s(2)}(a_1)
Y_bY_a(a_2) \Big)+ \frac{r}{2!} \tau\Big( \d^b_{\s(1)} (a_0)
{\d^a}_{\s(2)}(a_1) Y_bY_a(a_2)\Big)
\end{equation}}
We shall put the above expressions into the
 standard form $\tau(a_0h^1(a_1)h^2(a_2))$.  To this end we use
 the integration by parts property \eqref{aux-property-S-tilde}. On the computation below, we will employ the  actions
\begin{equation}
(f \ot g) \lt Y := f \ot g \cdot Y,\qquad (f \ot g) \btl Y := f  \cdot Y \ot g.
\end{equation}
of $\Fg_0$ on $\Hc_n^{\ot\,2}$. We first deal with \eqref{aux-box-3} by
\begin{align}\label{aux-sum-1}
\begin{split}
& \tau\Big( \D\left({\d^a}_{\s(1)\s(2)}\right)(a_0 \ot a_1) Y_a(a_2)\Big) =\\
& -2\tau\Big( a_0 {\d^a}_{\s(1)\s(2)}(a_1) Y_a(a_2)\Big) -\tau\Big( a_0 \left(\D\left({\d^a}_{\s(1)\s(2)}\right) \lt Y_a\right)(a_1 \ot a_2)\Big).
\end{split}
\end{align}
As for \eqref{aux-box-4} we compute
\begin{align}\label{aux-sum-2}
\begin{split}
& \frac{1}{2!} \tau\Big( {\d^a}_{\s(1)} (a_0) \d^b_{\s(2)} (a_1) Y_bY_a(a_2)\Big) + \frac{1}{2!} \tau\Big( \d^b_{\s(1)} (a_0) {\d^a}_{\s(2)} (a_1) Y_bY_a(a_2)\Big) = \\
& - \frac{1}{2} \tau\Big( a_0  \d^b_{\s(2)}(a_1) {\d^a}_{\s(1)}Y_bY_a(a_2) \Big)- \frac{1}{2} \tau\Big( a_0  {\d^a}_{\s(2)}(a_1) \d^b_{\s(1)}Y_bY_a(a_2) \Big)= \\
& \frac{1}{2} \tau\Big( a_0  {\d^a}_{\s(1)}(a_1) \d^b_{\s(2)}Y_bY_a (a_2)\Big)+ \frac{1}{2} \tau\Big( a_0  {\d^a}_{\s(1)} (a_1) \d^b_{\s(2)}Y_aY_b(a_2) \Big)= \\
& \tau\Big(a_0 {\d^a}_{\s(1)} (a_1) \d^b_{\s(2)}Y_bY_a(a_2)\Big) + \frac{1}{2} \tau\Big( a_0 {\d^a}_{\s(1)} (a_1) \d^b_{\s(2)}[Y_a,Y_b](a_2) \Big)= \\
& \tau\Big(a_0  {\d^a}_{\s(1)}(a_1) \d^b_{\s(2)}Y_bY_a (a_2)\Big)+
 \frac{1}{2} \tau\Big( a_0 \left(\D\left({\d^a}_{\s(1)\s(2)}\right) \lt Y_a\right)(a_1 \ot a_2)\Big).
\end{split}
\end{align}
On the third step we compute \eqref{aux-box-1} term by term.
\begin{align*}
& \tau\Big(X_{\s(1)} (a_0) X_{\s(2)}(a_1) a_2\Big) =\\
& -\tau\Big(a_0 X_{\s(2)} (a_1) X_{\s(1)}(a_2)\Big) +
\tau\Big(a_0 {\d^a}_{\s(1)}Y_aX_{\s(2)}(a_1) a_2 \Big) \\
&  + \tau\Big(a_0 X_{\s(2)}(a_1) {\d^a}_{\s(1)}Y_a(a_2)\Big)
 + \tau\Big(a_0 Y_aX_{\s(2)}(a_1) {\d^a}_{\s(1)}(a_2)\Big).
\end{align*}
\begin{align*}
&\tau\Big( {\d^a}_{\s(1)} (a_0) X_{\s(2)}Y_a (a_1) a_2\Big) =\\
& -\tau\Big(a_0 {\d^a}_{\s(1)} X_{\s(2)}Y_a (a_1)a_2 \Big)
 -\tau\Big(a_0  X_{\s(2)}Y_a (a_1) {\d^a}_{\s(1)}(a_2) \Big).
\end{align*}
\begin{align*}
& \tau\Big({\d^a}_{\s(1)}\d^b_{\s(2)}Y_b (a_0) Y_a (a_1) a_2\Big) =\\
& \tau\Big(Y_b (a_0) {\d^a}_{\s(1)}\d^b_{\s(2)}Y_a (a_1)a_2\Big) +\tau\Big( Y_b (a_0) Y_a (a_1) {\d^a}_{\s(1)}\d^b_{\s(2)}(a_2) \Big) \\
&  +\tau\Big( Y_b (a_0) {\d^a}_{\s(1)}Y_a (a_1) \d^b_{\s(2)} (a_2)\Big)+ \tau\Big(Y_b (a_0) \d^b_{\s(2)}Y_a (a_1) {\d^a}_{\s(1)}(a_2) \Big) =\\
&  -\tau\Big(a_0 {\d^a}_{\s(1)}\d^b_{\s(2)}Y_bY_a (a_1) a_2\Big) - 2\tau\Big( a_0 {\d^a}_{\s(1)\s(2)} Y_a (a_1) a_2 \Big)\\
&  -\tau\Big(a_0 {\d^a}_{\s(1)}\d^b_{\s(2)}Y_a (a_1) Y_b(a_2)\Big) -\tau\Big(a_0 Y_bY_a (a_1) {\d^a}_{\s(1)}\d^b_{\s(2)}(a_2) \Big)\\
&  -\tau\Big(a_0 Y_a (a_1) {\d^a}_{\s(1)}\d^b_{\s(2)}Y_b(a_2)\Big) -2 \tau\Big( a_0 Y_a (a_1) {\d^a}_{\s(1)\s(2)}(a_2) \Big)\\
&  -\tau\Big(a_0 {\d^a}_{\s(1)}Y_bY_a (a_1) \d^b_{\s(2)}(a_2)\Big) -\tau\Big( a_0 \left(\D\left({\d^a}_{\s(1)\s(2)}\right) \btl Y_a\right)(a_1 \ot a_2)\Big)\\
&  -\tau\Big(a_0{\d^a}_{\s(1)}Y_a(a_1) \d^b_{\s(2)}Y_b(a_2) \Big)- \tau\Big(a_0 \d^b_{\s(2)}Y_bY_a(a_1) {\d^a}_{\s(1)}(a_2) \Big)\\
&  -\tau\Big(a_0 \d^b_{\s(2)}Y_a (a_1) {\d^a}_{\s(1)}Y_b(a_2)\Big)
 -\tau\Big(a_0 \left(\D\left({\d^a}_{\s(1)\s(2)}\right) \btl Y_a\right)(a_1 \ot a_2) \Big).
\end{align*}
\begin{align*}
& \tau\Big({\d^a}_{\s(1)\s(2)} (a_0) Y_a(a_1) a_2\Big) =\\
&\tau\Big(a_0 {\d^a}_{\s(1)\s(2)} Y_a (a_1)a_2\Big)  + \tau\Big(a_0 Y_a (a_1)  {\d^a}_{\s(1)\s(2)}(a_2) \Big)\\
&  +\tau\Big( a_0\left(\D\left({\d^a}_{\s(1)\s(2)}\right) \btl Y_a\right)(a_1 \ot a_2)\Big).
\end{align*}
Summing up, we obtain
\begin{align}\label{aux-sum-3}
\begin{split}
& r\tau\Big(X_{\s(1)} (a_0) X_{\s(2)}(a_1) a_2\Big) +  r\tau\Big({\d^a}_{\s(1)} (a_0) X_{\s(2)}Y_a (a_1) a_2\Big)\\
& + s\tau\Big({\d^a}_{\s(1)}\d^b_{\s(2)}Y_b (a_0) Y_a (a_1)a_2\Big) + s\tau\Big({\d^a}_{\s(1)\s(2)}(a_0) Y_a (a_1)a_2\Big) = \\
& -r\tau\Big(a_0 X_{\s(2)} (a_1) X_{\s(1)}(a_2)\Big) -s\tau\Big(a_0 {\d^a}_{\s(1)}\d^b_{\s(2)}Y_a (a_1) Y_b(a_2)\Big) \\
&+r \tau\Big(a_0 X_{\s(2)}(a_1) {\d^a}_{\s(1)}Y_a(a_2) \Big) -s\tau\Big(a_0 Y_a (a_1) {\d^a}_{\s(1)}\d^b_{\s(2)}Y_b(a_2)\Big) \\
& -s\tau\Big(a_0 {\d^a}_{\s(1)}Y_a (a_1) \d^b_{\s(2)}Y_b(a_2)\Big)
 -s\tau\big(a_0 \d^b_{\s(2)}Y_a (a_1) {\d^a}_{\s(1)}Y_b(a_2)\Big).
\end{split}
\end{align}
Finally, for \eqref{aux-box-2} we have
\begin{align*}
& \tau\Big({\d^a}_{\s(1)}(a_0) X_{\s(2)}(a_1) Y_a(a_2)\Big) =\\
& -\tau\Big(a_0 {\d^a}_{\s(1)} X_{\s(2)}(a_1) Y_a(a_2)\Big)
 -\tau\big(a_0 X_{\s(2)}(a_1) {\d^a}_{\s(1)} Y_a(a_2) \Big),
\end{align*}
\begin{align*}
&\tau\Big( X_{\s(1)} (a_0) {\d^a}_{\s(2)} (a_1) Y_a(a_2)\Big) =\\
& -\tau\Big(a_0 X_{\s(1)} {\d^a}_{\s(2)}(a_1) Y_a(a_2)\Big) -\tau\Big(a_0  {\d^a}_{\s(2)} (a_1) X_{\s(1)}Y_a(a_2)\Big) \\
&  + \tau\Big(a_0 \d^b_{\s(1)} {\d^a}_{\s(2)}Y_b (a_1) Y_a \Big)- 2 \tau\Big( a_0 {\d^a}_{\s(1)\s(2)}(a_1) Y_a(a_2) \Big) \\
& + \tau\Big(a_0 {\d^a}_{\s(2)} (a_1) \d^b_{\s(1)}Y_bY_a(a_2) \Big) +\tau\Big(a_0 {\d^a}_{\s(2)}Y_b(a_1) \d^b_{\s(1)}Y_a (a_2)\Big) \\
&  -\tau\Big( a_0\left(\D\left({\d^a}_{\s(1)\s(2)}\right) \lt Y_a\right) (a_1\ot
a_2)\Big).
\end{align*}
\begin{align*}
&  \tau\Big({\d^a}_{\s(1)}\d^b_{\s(2)} (a_0) Y_b(a_1) Y_a(a_2)\Big) =\\
&  \tau\Big(a_0 {\d^a}_{\s(1)}\d^b_{\s(2)} Y_b (a_1) Y_a(a_2)\Big) + \tau\Big(a_0 Y_b (a_1) {\d^a}_{\s(1)}\d^b_{\s(2)}Y_a (a_2)\Big)\\
&  \tau\Big(a_0 {\d^a}_{\s(1)}Y_b (a_1) \d^b_{\s(2)}Y_a(a_2) \Big)+
 \tau\Big(a_0 \d^b_{\s(2)}Y_b (a_1) {\d^a}_{\s(1)}Y_a (a_2)\Big).
\end{align*}
\begin{align*}
& \tau\Big( \d^b_{\s(1)}(a_0) {\d^a}_{\s(2)}Y_b (a_1) Y_a(a_2)\Big) =\\
&  \tau\Big( a_0 \d^b_{\s(1)} {\d^a}_{\s(2)}Y_b (a_1) Y_a (a_2)\Big)+\tau\Big(a_0 {\d^a}_{\s(2)}Y_b(a_1) \d^b_{\s(1)}Y_a(a_2)\Big).
\end{align*}
\begin{align*}
&  \tau\Big({\d^a}_{\s(1)}Y_b (a_0) \d^b_{\s(2)}(a_1) Y_a(a_2)\Big) =\\
& -\tau\Big( Y_b (a_0) {\d^a}_{\s(1)}\d^b_{\s(2)} (a_1) Y_a(a_2) \Big)+ \tau\Big(a_0 \d^b_{\s(2)}Y_b (a_1) {\d^a}_{\s(1)}Y_a(a_2) \Big)\\
&  + \tau\Big(a_0\d^b_{\s(2)}(a_1)
{\d^a}_{\s(1)}Y_bY_a (a_2)\Big)+
 \tau\Big(a_0\left(\D\left({\d^a}_{\s(1)\s(2)}\right) \lt Y_a\right) (a_1\ot a_2)\Big).
\end{align*}
\begin{align*}
& \tau\Big(\d^b_{\s(1)}Y_b (a_0) {\d^a}_{\s(2)} (a_1) Y_a (a_2)\Big)=\\
&  \tau\Big( a_0 \d^b_{\s(1)}{\d^a}_{\s(2)}Y_b (a_1) Y_a(a_2)\Big) -2 \tau\Big( a_0 {\d^a}_{\s(1)\s(2)} (a_1) Y_a (a_2)\Big)\\
& +\tau\Big( a_0 \d^b_{\s(1)}{\d^a}_{\s(2)}(a_1) Y_bY_a(a_2)\Big) +\tau\Big( a_0{\d^a}_{\s(2)}Y_b(a_1) \d^b_{\s(1)}Y_a(a_2) \Big)\\
&  -\tau\Big( a_0 \left(\D\left({\d^a}_{\s(1)\s(2)}\right) \lt Y_a\right)(a_1 \ot a_2)\Big) + \tau\Big(a_0 {\d^a}_{\s(2)} (a_1) \d^b_{\s(1)} Y_bY_a (a_2)\Big).
\end{align*}
\begin{align*}
&  \tau\Big({\d^a}_{\s(1)}(a_0) \d^b_{\s(2)}Y_b (a_1) Y_a(a_2)\Big) =\\
& -\tau\Big(a_0 {\d^a}_{\s(1)} \d^b_{\s(2)}Y_b(a_1) Y_a(a_2)\big) - \tau\Big(a_0 \d^b_{\s(2)}Y_b(a_1) {\d^a}_{\s(1)}Y_a(a_2)\Big).
\end{align*}
Therefore,
\begin{align}\label{aux-sum-4}
\begin{split}
& r\tau\Big({\d^a}_{\s(1)} (a_0) X_{\s(2)} (a_1) Y_a(a_2) \Big)+ r\tau\Big( X_{\s(1)} (a_0) {\d^a}_{\s(2)} (a_1) Y_a (a_2)\Big) \\
& -s\tau\Big( {\d^a}_{\s(1)}\d^b_{\s(2)}(a_0) Y_b (a_1) Y_a(a_2) \Big) -s\tau\Big( Y_b (a_0) {\d^a}_{\s(1)}\d^b_{\s(2)} (a_1) Y_a(a_2)\Big) \\
& +(r+s) \tau\Big(  \d^b_{\s(1)} (a_0) {\d^a}_{\s(2)}Y_b (a_1) Y_a(a_2) \Big)-s\tau\Big( {\d^a}_{\s(1)}Y_b (a_0) \d^b_{\s(2)} (a_1) Y_a (a_2)\Big) \\
& +s\tau\Big( \d^b_{\s(1)}Y_b (a_0) {\d^a}_{\s(2)}(a_1) Y_a(a_2) \Big)-s\tau\Big( {\d^a}_{\s(1)}(a_0) \d^b_{\s(2)}Y_b (a_1) Y_a(a_2)\Big) \\
&= -r\tau\Big(a_0  {\d^a}_{\s(2)} (a_1) X_{\s(1)}Y_a(a_2)\Big) -r\tau\Big(a_0 X_{\s(2)} (a_1) {\d^a}_{\s(1)} Y_a(a_2)\Big)\\
& - s\tau\Big(a_0  Y_b (a_1) {\d^a}_{\s(1)}\d^b_{\s(2)}Y_a(a_2)\Big) - s\tau\Big(a_0  \d^b_{\s(2)}Y_b(a_1) {\d^a}_{\s(1)}Y_a(a_2)\Big)\\
& +s\tau\Big(a_0 {\d^a}_{\s(2)}Y_b (a_1) \d^b_{\s(1)}Y_a (a_2)\Big)- r\tau\Big(a_0 {\d^a}_{\s(1)}(a_1) \d^b_{\s(2)}Y_bY_a(a_2)\Big) \\
& (-r-s) \tau\Big( a_0 \left(\D\left({\d^a}_{\s(1)\s(2)}\right) \lt Y_a\right)(a_1 \ot a_2)\Big) +(-r-s) \tau\Big(a_0 {\d^a}_{\s(1)\s(2)}(a_1 ) Y_a(a_2)\Big).
\end{split}
\end{align}
As a result of the computations \eqref{aux-sum-1}, \eqref{aux-sum-2}, \eqref{aux-sum-3}, and \eqref{aux-sum-4} we obtain
\begin{equation*}
t\psi(1 \ot a_0\ot a_1 \ot a_2) = \psi_0(1 \ot a_0 \ot a_1\ot a_2).
\end{equation*}
\end{proof}

\begin{theorem}\label{theorem-vp}
The following cochain  $\vp=\vp_0+\vp_1+\vp_2\in C^2_\Kc(\Ac, V)$ is
a SAYD-twisted cyclic cocycle and  cohomologous to $\psi$ which is
defined in Proposition \ref{prop-psi}.
\begin{align}\label{aux-vp'-2-quick}
& \vp_2 = \chi_\tau^{\rm eq}\left(S_{ab}\ot\one \ot {\d^a}_{\s(1)} \ot {\d^b}_{\s(2)} + S_{ab}\ot\one \ot {\d^b}_{\s(1)} \ot {\d^a}_{\s(2)} \right) \\\notag
& \vp_1 = \chi_\tau^{\rm eq}\Big(- S_a\ot\one \ot {\d^a}_{\s(1)} \ot X_{\s(2)} - S_a\ot\one\ot X_{\s(1)} \ot {\d^a}_{\s(2)}\\\label{aux-vp'-1-quick}
 & - S_a\ot\one \ot {\d^b}_{\s(1)} \ot {\d^a}_{\s(2)}Y_b +\frac{1}{2}S_a\ot \one\ot \D({\d^a}_{\s(1)\s(2)})\Big) \\\label{aux-vp'-0-quick}
& \vp_0 = \chi_\tau^{\rm eq}\left(1^\ast\ot\one \ot X_{\s(1)} \ot X_{\s(2)} + 1^\ast\ot\one \ot {\d^a}_{\s(1)} \ot X_{\s(2)}Y_a\right),
\end{align}
\end{theorem}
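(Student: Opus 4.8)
The plan is not to re-establish the cocycle identity by hand but to recognize the displayed $\vp$ as a specialization of the family $\psi$ already analyzed in Proposition \ref{prop-psi}. First I would match the three components \eqref{aux-vp'-2-quick}, \eqref{aux-vp'-1-quick}, \eqref{aux-vp'-0-quick} against \eqref{aux-vp-2}, \eqref{aux-vp-1}, \eqref{aux-vp-0} term by term, using the solutions \eqref{aux-alpha}, \eqref{aux-beta}, \eqref{aux-gamma}. This reveals that $\vp$ is exactly $\psi$ at the parameter values $r=1$, $s=0$: in $\vp_2$ one has $\a_1=\a_2=1$; in $\vp_1$ the only surviving monomials are the four carrying $\b_1=\b_2=\b_8=-1$ and $\b_9=\tfrac12$, all the $\b_j$ proportional to $s$ having vanished; and $\vp_0$ retains only the two terms with $\g_1=\g_2=1$, the $\g_3,\g_4$ terms dropping out. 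Since $(r,s)=(1,0)$ satisfies the hypotheses of Proposition \ref{prop-psi}, this identification already yields that $\vp$ is a $\Kc$-equivariant, SAYD-twisted cyclic $2$-cocycle, so the first assertion needs no further computation.

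For the cohomology statement I would use that $\psi$ is linear in its free parameters, $\psi_{r,s}=r\,\vp+s\,\omega$ with $\omega:=\psi_{0,1}$, since all coefficients in \eqref{aux-alpha}--\eqref{aux-gamma} vanish at $r=s=0$. With $r$ fixed to the normalization $1$ the difference becomes $\psi-\vp=s\,\omega$, so the claim reduces to showing that $\omega$ is a coboundary in the cyclic complex $C_\Kc^\bullet(\Ac,V)$, that is $\omega=b\eta$ for some equivariant cyclic $1$-cochain $\eta\in C^1_\Kc(\Ac,V)$. This is the expected meaning of the parameter $s$: it appeared as the residual freedom left after solving the Hochschild and cyclicity systems \eqref{system-vp1-Hochschild} and \eqref{system-vp1-cyclic} in Lemmas \ref{lemma-bvp1=0} and \ref{lemma-tauvp1=vp1}, and such a one-dimensional freedom in a cocycle family is exactly the freedom of adding an exact cochain.

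Concretely I would posit $\eta=\chi_\tau^{\rm eq}(\xi)$ for an explicit element $\xi\in(V^\ast\ot\Cb_\d\ot\Hc)^{\Fg_0}$ whose monomials are read off from the $s$-part $\omega$ (the pieces built from $\d^a_{\s(1)}\d^b_{\s(2)}$, $Y_b$, and $\D(\d^a_{\s(1)\s(2)})$ paired with $S_a$ and $1^\ast$), impose $\Fg_0$-invariance on $\xi$ exactly as in Lemma \ref{lemma-psi-eq}, and then compute $b\eta$ on each graded piece $V_0\oplus V_1\oplus V_2$. The verification uses the Leibniz rules \eqref{aux-D-Y}--\eqref{aux-D-X}, the Koszul coaction \eqref{aux-Koszul-K-coaction} in the last coface operator, and the integration-by-parts identity \eqref{aux-property-S-tilde}, together with a check that $\eta$ is cyclic so that $b\eta$ stays in the cyclic complex. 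The main obstacle is precisely this transgression: whereas the cocycle property is automatic once $\vp=\psi_{1,0}$ is identified, constructing the correct primitive $\eta$ requires guessing the right combination of monomials and then reducing every resulting term to the standard form $\tau(a_0h^1(a_1)h^2(a_2))$, a bookkeeping computation of the same length and flavor as the boxed manipulations \eqref{aux-box-1}--\eqref{aux-box-4}.
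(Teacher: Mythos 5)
Your first step is correct and is exactly what the paper does implicitly: matching coefficients shows $\vp=\psi$ at the parameter values $(r,s)=(1,0)$, and since this pair satisfies \eqref{aux-alpha}, \eqref{aux-beta} and \eqref{aux-gamma}, Proposition \ref{prop-psi} already gives that $\vp$ is a $\Kc$-equivariant SAYD-twisted cyclic $2$-cocycle. Your linear decomposition $\psi_{r,s}=r\,\vp+s\,\omega$ with $\omega:=\psi_{0,1}$ also agrees with the paper's splitting $\psi=r\vp+s\phi$.

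The gap is in the second half. You reduce the theorem to producing an equivariant cyclic $1$-cochain $\eta$ with $b\eta=\omega$, but you never produce it; you only describe how you ``would posit'' it, and you yourself flag this transgression as the main obstacle. The heuristic you offer in its place --- that a one-parameter freedom in a family of cocycles ``is exactly the freedom of adding an exact cochain'' --- is not an argument: a line of cyclic cocycles can perfectly well sweep out a line of distinct cohomology classes, so the exactness of $\omega$ must be proved, and that proof is the entire content of the theorem beyond Proposition \ref{prop-psi}. The paper closes this gap with one structural observation that you miss: the $V_1$-component of $\phi=\psi_{0,1}$ assembles into coproducts,
\begin{equation*}
\phi_1=\chi_\tau^{\rm eq}\left(S_a\ot\one\ot\D\left({\d^a}_{\s(1)}{\d^b}_{\s(2)}Y_b\right)+S_a\ot\one\ot\D\left({\d^a}_{\s(1)\s(2)}\right)\right),
\end{equation*}
so a primitive is read off simply by deleting the $\D$: the $1$-cochain $\phi'$ with $\phi'_0=\phi'_2=0$ and
\begin{equation*}
\phi'_1=\chi_\tau^{\rm eq}\left(S_a\ot\one\ot{\d^a}_{\s(1)}{\d^b}_{\s(2)}Y_b+S_a\ot\one\ot{\d^a}_{\s(1)\s(2)}\right)
\end{equation*}
is equivariant and cyclic, the middle coface of $b$ regenerates the coproduct terms, the twisted last coface (through the Koszul coaction \eqref{aux-Koszul-K-coaction}) produces exactly $\phi_0$, and the $V_2$-component vanishes, whence $b\phi'=\phi$. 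In particular the verification is short --- nothing like ``the same length and flavor as the boxed manipulations'' you predict --- but without exhibiting $\phi'$ (or some primitive) your proposal does not establish that $\vp$ is cohomologous to $\psi$.
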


\begin{proof}
As a result of Proposition \ref{prop-psi}  we can write $\psi = r\vp+s\phi$ for a 2-cochain $\phi = \phi_0+\phi_1+\phi_2$ given by
\begin{align}
& \phi_2 = 0 \\\notag
& \phi_1 = \chi_\tau^{\rm eq}\left(S_a\ot\one \ot {\d^a}_{\s(1)}{\d^b}_{\s(2)} \ot Y_b + S_a\ot\one\ot Y_b \ot {\d^a}_{\s(1)}{\d^b}_{\s(2)}\right. \\\notag
& + S_a\ot\one \ot {\d^a}_{\s(1)}Y_b \ot {\d^b}_{\s(2)} - S_a\ot\one \ot {\d^b}_{\s(1)}Y_b \ot {\d^a}_{\s(2)}  \\\notag
& + S_a\ot\one \ot {\d^a}_{\s(1)} \ot {\d^b}_{\s(2)}Y_b - S_a\ot\one \ot {\d^b}_{\s(1)} \ot {\d^a}_{\s(2)}Y_b \\
& +S_a\ot\one\ot \D({\d^a}_{\s(1)\s(2)})\Big) \\
& \phi_0 = \chi_\tau^{\rm eq}\left(1^\ast\ot\one \ot {\d^a}_{\s(1)}{\d^b}_{\s(2)}Y_b \ot Y_a +1^\ast\ot \one \ot {\d^a}_{\s(1)\s(2)} \ot Y_a\right).
\end{align}
We note that
\begin{equation}
\phi_1 = \chi_\tau^{\rm eq}\left(S_a\ot\one \ot \D\left({\d^a}_{\s(1)}{\d^b}_{\s(2)}Y_b\right) + S_a\ot\one\ot \D({\d^a}_{\s(1)\s(2)})\right).
\end{equation}
It is then straightforward to check that the 1-cochain $\phi'=\phi'_0+\phi'_1+\phi'_2$ given by
\begin{align}
& \phi'_2 = 0 \\
& \phi'_1 = \chi_\tau^{\rm eq}\left(S_a\ot\one \ot {\d^a}_{\s(1)}{\d^b}_{\s(2)}Y_b + S_a\ot\one \ot {\d^a}_{\s(1)\s(2)}\right) \\
& \phi'_0 = 0
\end{align}
is an equivariant cyclic 1-cocycle, and that
\begin{equation}
b\phi' = \phi.
\end{equation}
\end{proof}

\section{The characteristic map with coefficients}

In this section we construct a new characteristic map  from the
truncated Weil complex of the Lie algebra $\Fg_0:=g\ell_n$ to
 the cyclic complex of the crossed product algebra
 $\Ac = C_c^\infty(F^+)\rtimes \G$, and we
 illustrate  it completely  in codimensions $n=1$ and $n=2$. We
 observe that the resulting cocycles in codimension  1  match  with
 those   in \cite{ConnMosc98,ConnMosc} by Connes-Moscovici .

\medskip

\ni Such a characteristic map is obtained by composing a series of
maps
\begin{equation}
\xymatrix{ H(W(\Fg_0,V)) \ar[r]^{\FD_P} & H(C(\Fg_0,V))
\ar[r]^\cong& HC(U(\Fg_0),V) \ar[r]^{~~~\chi_\vp} & HC(\Ac). }
\end{equation}
As it is shown in \cite{RangSutl-II} the truncated Weil algebra is
identical with $W(\Fg_0,V)$. The Poincar\'e isomorphism
$\FD_{\rm P}$  is defined in \cite[Prop. 4.4]{RangSutl-II}. The middle
quasi-isomorphism is defined  in \cite[Thm. 6.2]{RangSutl-II}.
Finally the map $\chi_\vp$ is given by the cup product, in the sense of
\cite{KhalRang05-II,Rang08}, with the SAYD-twisted  cyclic cocycle
$\vp$ defined in Theorem \ref{theorem-vp}.

\medskip

\ni Let us recall the mentioned cup product from \cite{Rang08}. Let $C$
be a  $H$-module coalgebra and $A$ be an $H$-module algebra that
are equipped with a mapping
\begin{equation}
C \ot A \to A, \quad c \ot a \mapsto c(a)
\end{equation}
satisfying the conditions
\begin{equation}\label{aux-condition-C-A}
 (h \cdot c)(a) = h \cdot (c(a)),\quad  c(ab) =
c\ps{1}(a)c\ps{2}(b), \quad  c(1) = \ve(c)1.
\end{equation}
Let also $V$ be a  SAYD module over a Hopf algebra $H$.  One defines
\begin{equation}
\cup: C_H^p(C,V) \ot C_H^q(A,V) \to C^{p+q}(A)
\end{equation}
 for any $\vp \in C_H^q(A,V)$ and any $x = v \ot_H c^0 \odots
c^p \in C_H^p(C,V)$,
\begin{align}\label{aux-cup-product}
& (x \cup \vp)(a_0 \odots a_{p+q}) := \\\notag
 & \sum_{\s \in
{\rm Sh}(p,q)} (-1)^\s \p_{\wbar{\s}(p)} \ldots
\p_{\wbar{\s}(1)}\vp(\langle \p_{\wbar{\s}(p+q)} \ldots
\p_{\wbar{\s}(p+1)} x, a_0 \odots a_{p+q} \rangle),
\end{align}
where
\begin{equation}
\langle x, a_0 \odots a_n \rangle := v \ot_H c^0(a^0) \odots
c^n(a^n).
\end{equation}
Here ${\rm Sh}(p,q)$ is the set of all  $(p,q)$-shuffle
permutations, and
\begin{equation}
\wbar{\s}(n) = \s(n)-1.
\end{equation}
We set
$$C:= H := \Kc, \quad V= S(\Fg_0^\ast)\nsb{2n}, \quad A = \Ac.$$
Here $H$ acts on $C$ via multiplication, on $A$ as  \eqref{aux-action-Hn-on-A} , and
on $V$ via the coadjoint action.
 This construction yields for any  $\vp\in C^n_\Kc(\Ac,V)$ a characteristic map
\begin{equation}
\chi_\vp:C^\ell_\Kc(\Kc, V)\ra C^{\ell+n}(\Ac).
\end{equation}

\subsection{The characteristic map in codimension 1}
In this subsection we apply  the SAYD-twisted cyclic cocycle
\eqref{aux-pieces-of-cocycle} to illustrate the new characteristic
map
\begin{equation}
\chi_\vp:H(W(g\ell_1)\nsb{2})\longrightarrow
HC(C^\infty_c(F^+\Rb)\rtimes \G).
\end{equation}

\ni In order to verify that the new characteristic map is
geometrically meaningful,   we compare it with the Connes-Moscovici
computations for the classes of $\Hc_1$ in \cite{ConnMosc}. For the
convenience of the reader we recall the Hopf-cyclic classes in
$C(\Hc_1,\Cb_\d)$, namely the the transverse fundamental class
\begin{equation}
{\rm TF} = X \ot Y - Y \ot X - \d_1Y \ot Y \in C^2(\Hc_1, \Cb_\d)
\end{equation}
and the Godbillon-Vey class
\begin{equation}
{\rm GV} = \d_1 \in C^1(\Hc_1,\Cb_\d).
\end{equation}

\ni In view of the characteristic map
\eqref{aux-Connes-Moscovici-charac-map}, one  expresses  the
characteristic classes $TF \in C^2(\Ac_\Gamma)$ and $GV \in
C^1(\Ac_\Gamma)$ as
\begin{align}
& TF(a_0 \ot a_1 \ot a_2)  \\\notag
 & =\tau(a_0X(a_1)Y(a_2)) -
\tau(a_0Y(a_1)X(a_2)) - \tau(a_0\d_1Y(a_1)Y(a_2))
\end{align}
and
\begin{equation}
GV(a_0 \ot a_1)  = \tau(a_0\d_1(a_1)).
\end{equation}

\medskip

\ni In this subsection we set $\Kc = U(g\ell_1)$, $V = S(g\ell_1^\ast)\nsb{2}$, and $\Ac = C^\infty_c(F^+\Rb)\rtimes\G$.

\medskip

\ni The next step is to find the representative cocycles of
$H(g\ell_1,V)$. Let $\{Y\}$ and $\{\t\}$ be
 a dual pair of bases for $g\ell_1$ and $g\ell_1^\ast$.

\medskip

\ni By the Vey basis \cite{Godb72}, the cohomology of
$W(g\ell_1)\nsb{2}$ is spanned by
\begin{equation}
{\rm TF}:= 1\in S(g\ell_1^\ast)\nsb{2},\quad {\rm GV}:=\t\ot R\in
g\ell_1^\ast\ot S(g\ell_1^\ast)\nsb{2}
\end{equation}
Applying the Poincar\'e duality  \cite[Prop. 4.4]{RangSutl-II},  we
obtain the classes in $HC(g\ell_1, V)$
\begin{equation}
\FD_{\rm P}(1)= Y\ot 1\in g\ell_1\ot V, \quad \FD_{\rm P}( \t\ot R)=
R\in V.
\end{equation}

\begin{proposition}
The Hopf-cyclic cohomology $HC(\Kc,V)$ is generated by the classes
\begin{align}\label{aux-0-cocycle-codim-1}
& [R] \in HC^0(\Kc,V),\\\label{aux-1-cocycle-codim-1}
& [1 \ot Y + \frac{1}{2}R \ot Y^2] \in HC^1(\Kc,V).
\end{align}
\end{proposition}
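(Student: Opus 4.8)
The plan is to obtain the two generators as the images of the Vey basis of $H(W(g\ell_1)\nsb{2})$ under the composite isomorphism
\begin{equation*}
H(W(\Fg_0,V))\xrightarrow{\ \FD_{\rm P}\ }H(C(\Fg_0,V))\xrightarrow{\ \cong\ }HC(\Kc,V),
\end{equation*}
where $\FD_{\rm P}$ is the Poincar\'e duality of \cite[Prop. 4.4]{RangSutl-II} and the second arrow is the quasi-isomorphism of \cite[Thm. 6.2]{RangSutl-II}. Since $\{{\rm TF}=1,\ {\rm GV}=\t\ot R\}$ spans $H(W(g\ell_1)\nsb{2})$ by the Vey basis, their images will generate $HC(\Kc,V)$, so it suffices to push TF and GV through both maps. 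The images under $\FD_{\rm P}$ are already recorded just above the statement, namely $\FD_{\rm P}(\t\ot R)=R\in C_0(\Fg_0,V)$ and $\FD_{\rm P}(1)=Y\ot 1\in C_1(\Fg_0,V)$.

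For degree zero, the quasi-isomorphism is the identity on $V$, so GV is carried to $R\in C^0(\Kc,V)$. I would confirm directly that this is a genuine class: after the truncation $R^2=0$ the induced $\Kc$-coaction reduces to $\nabla(R)=1\ot R$, whence $b(R)=\Fd_0(R)-\Fd_1(R)=R\ot 1-R\ot 1=0$, while $\nabla(1)=1\ot 1+Y\ot R$ gives $b(1)=-\,R\ot Y\neq 0$. This already yields $HC^0(\Kc,V)=\Cb[R]$.

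In degree one the quasi-isomorphism is the antisymmetrization map corrected by the coaction so as to intertwine the mixed differential $\p_{\rm CE}+\p_{\rm K}$ with $b+B$. Its leading term sends $Y\ot 1$ to $1\ot Y$, and the Koszul coaction $\nabla_{\Fg_0}(1)=Y\ot R$ of \eqref{aux-Koszul-ex} contributes the correction $\tfrac12\,R\ot Y^2$, producing $1\ot Y+\tfrac12 R\ot Y^2$. I would verify that this cochain is a Hochschild cocycle by a direct computation using $\D(Y)=Y\ot 1+1\ot Y$, $\D(Y^2)=Y^2\ot 1+2Y\ot Y+1\ot Y^2$, $\nabla(1)=1\ot 1+Y\ot R$ and $\nabla(R)=1\ot R$, which give
\begin{align*}
& b(1\ot Y)=R\ot Y\ot Y,\\
& b(\tfrac12\,R\ot Y^2)=-\,R\ot Y\ot Y,
\end{align*}
so the two contributions cancel and $b\bigl(1\ot Y+\tfrac12 R\ot Y^2\bigr)=0$. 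Cyclicity $\Ft\bigl(1\ot Y+\tfrac12 R\ot Y^2\bigr)=1\ot Y+\tfrac12 R\ot Y^2$ then follows from the cocommutativity of $\Kc$ and the triviality of the coadjoint $\Fg_0$-action on $V$.

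The hard part will be pinning down the explicit form of the quasi-isomorphism of \cite[Thm. 6.2]{RangSutl-II}, in particular the exact coefficient $\tfrac12$ of the Koszul correction; the cancellation displayed above is precisely the check that fixes it. Granting that the map of \cite[Thm. 6.2]{RangSutl-II} is an isomorphism, the one-dimensionality of $H^1(W(g\ell_1)\nsb{2})$ forces $HC^1(\Kc,V)=\Cb\bigl[1\ot Y+\tfrac12 R\ot Y^2\bigr]$, and together with the degree-zero computation this shows that $[R]$ and $[1\ot Y+\tfrac12 R\ot Y^2]$ generate $HC(\Kc,V)$.
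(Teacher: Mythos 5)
Your candidates and all the explicit computations you perform are correct and agree with the paper's: the coactions $\nabla(1)=1\ot 1+Y\ot R$ and $\nabla(R)=1\ot R$, the identities $b(1\ot Y)=R\ot Y\ot Y$ and $b\bigl(\tfrac12 R\ot Y^2\bigr)=-R\ot Y\ot Y$, and the degree-zero discussion; in degree zero your argument is complete, since $\ker b\cap C^0(\Kc,V)=\Cb R$ and there are no coboundaries in that degree. The genuine gap is the degree-one nontriviality. From the one-dimensionality of $HC^1(\Kc,V)$ and the fact that $1\ot Y+\tfrac12 R\ot Y^2$ is a cyclic cocycle you conclude that this cocycle generates; that does not follow, because a cyclic cocycle in a one-dimensional cohomology group can perfectly well be a coboundary and represent the zero class. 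What would rule this out is your claim that the quasi-isomorphism of \cite[Thm.\ 6.2]{RangSutl-II} sends the generator $Y\ot 1$ to exactly $1\ot Y+\tfrac12 R\ot Y^2$, but you never establish that the map has this form: the cocycle condition fixes the coefficient $\tfrac12$ only within the ansatz $1\ot Y+c\,R\ot Y^2$, and it neither shows that the image of $Y\ot 1$ has this shape nor that your cochain is cohomologically nonzero.

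The paper closes exactly this point by going in the opposite direction: it applies the explicit quasi-isomorphism $\mu$ of \eqref{aux-quasi-inverse-of-anti-symm}, the left inverse of the antisymmetrization map, and computes $\mu(R)=R$ and $\mu\bigl(1\ot Y+\tfrac12 R\ot Y^2\bigr)=1\ot Y$, which are precisely the generators $\FD_{\rm P}(\t\ot R)$ and $\FD_{\rm P}(1)$ of $H(C(\Fg_0,V))$ coming from the Vey basis; since $\mu$ induces an isomorphism in cohomology, both classes are nonzero and generate. You could repair your proof this way, or even more cheaply with computations you already have: every $0$-cochain is cyclic (the cocyclic module axiom $t_0^{1}=\Id$ forces $t_0=\Id$), so the cyclic $1$-coboundaries form the line $b(C^0)=\Cb\,(R\ot Y)$, because $b(1)=-R\ot Y$ and $b(R)=0$; your candidate has a nonvanishing $1\ot Y$ component, hence is not a coboundary, and then the one-dimensionality of $HC^1(\Kc,V)$ does yield that it generates. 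Without one of these two steps, the degree-one half of the proposition remains unproven.
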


\begin{proof}
We first check that $R \in C^0(\Kc,V)$ is a cyclic 0-cocycle. Indeed,
\begin{equation}
b(R) = R \ot 1 - R \ot 1 = 0, \qquad \Ft(R) = R.
\end{equation}
In a similar fashion,
\begin{equation*}
b(1 \ot Y) = 1 \ot 1 \ot Y - 1 \ot \D(Y) + 1 \ot Y \ot 1 + R \ot Y \ot Y
\end{equation*}
yields
\begin{equation}
b(1 \ot Y + \frac{1}{2}R \ot Y^2) = 0.
\end{equation}
Moreover, applying the cyclic map we observe
\begin{equation}
\Ft(1 \ot Y + \frac{1}{2}R \ot Y^2) = -1 \ot Y - R \ot Y^2 + \frac{1}{2}R \ot Y^2 = - (1 \ot Y + \frac{1}{2}R \ot Y^2).
\end{equation}
Finally, we apply the quasi-isomorphism
\begin{equation}\label{aux-quasi-inverse-of-anti-symm}
\mu:C^\bullet(U(\Fg),V) \longrightarrow C_\bullet(\Fg,V),
\end{equation}
which, for any Lie algebra $\Fg$, is the left inverse of the anti-symmetrization map, see \cite{ConnMosc,RangSutl-II}. Then we have
\begin{equation}
\mu(R) = R, \qquad \mu(1 \ot Y + \frac{1}{2}R \ot Y^2) = 1 \ot Y,
\end{equation}
the generators of the cohomology $HC(g\ell_1,V)$. This observation finishes the proof.
\end{proof}

\ni Let us find the images of the cocycles \eqref{aux-0-cocycle-codim-1} and \eqref{aux-1-cocycle-codim-1} under the map
\begin{equation}
\chi_\vp:C_\Kc^\bullet(\Kc,V) \to C^{\bullet+1}(\Ac).
\end{equation}
We compute
\begin{align}
\begin{split}
& \chi_\vp(\t)(a_0 \ot a_1) = (\vp \cup (\t \ot 1))(a_0 \ot a_1) \\
& = \vp(\langle\p_0(\t \ot 1),a_0 \ot a_1\rangle) = \vp(\t \ot a_0 \ot a_1) = -\tau(a_0\d_1(a_1)),
\end{split}
\end{align}
and in the same way,
\begin{align}
\begin{split}
& \chi_\vp(1 \ot Y + \frac{1}{2}\t \ot Y^2)(a_0 \ot a_1\ot a_2) = \\
& \sum_{\s \in Sh(1,1)} (-1)^\s \p_{\wbar{\s}(1)}\vp(\langle \p_{\wbar{\s}(2)}(1 \ot Y + \frac{1}{2}\t \ot Y^2),a_0 \ot a_1\ot a_2 \rangle) = \\
& -\p_0\vp(\langle \p_1(1 \ot Y + \frac{1}{2}\t \ot Y^2),a_0 \ot a_1\ot a_2 \rangle) + \\
& \p_1\vp(\langle \p_0(1 \ot Y + \frac{1}{2}\t \ot Y^2),a_0 \ot a_1\ot a_2 \rangle).
\end{split}
\end{align}
Then since
\begin{align}
\begin{split}
& \p_1(1 \ot Y + \frac{1}{2}\t \ot Y^2) = 1 \ot 1 \ot 1 \ot Y + 1 \ot 1 \ot Y \ot 1 +  \\
& \frac{1}{2} \t \ot 1 \ot Y^2 \ot 1 + \frac{1}{2}\t \ot 1 \ot 1 \ot Y^2 + \t \ot 1 \ot Y \ot Y,
\end{split}
\end{align}
we obtain
\begin{align}
\begin{split}
& \chi_\vp(1 \ot Y + \frac{1}{2}\t \ot Y^2)(a_0 \ot a_1\ot a_2) = \\
& \vp(-1 \ot a_0a_1 \ot Y(a_2) - 1 \ot a_0Y(a_1) \ot a_2 - \frac{1}{2}\t \ot a_0Y^2(a_1) \ot a_2 \\
& - \frac{1}{2}\t \ot a_0a_1 \ot Y^2(a_2) - \t \ot a_0Y(a_1) \ot Y(a_2)) + \vp(1 \ot a_0 \ot a_1Y(a_2) \\
& + \frac{1}{2}\t \ot a_0 \ot a_1Y^2(a_2)) \\
& = -\tau(a_0Y(a_1)X(a_2)) + \tau(a_0X(a_1)Y(a_2)) + \frac{1}{2}\tau(a_0Y^2(a_1)\d_1(a_2)) \\
& + \frac{1}{2}\tau(a_0\d_1(a_1)Y^2(a_2)) + \tau(a_0Y(a_1)\d_1Y(a_2)).
\end{split}
\end{align}

\begin{remark}\rm{
We note that
\begin{equation}
b(\d_1Y^2) \in C^2_{\Hc_1}(\Hc_1,\Cb_\d)
\end{equation}
is a cyclic cocycle and
\begin{equation}
\chi_\vp(1 \ot Y + \frac{1}{2}\t \ot Y^2) = \chi_\tau({\rm TF} + \frac{1}{2}b(\d_1Y^2)).
\end{equation}
Hence, we obtain the transverse fundamental class up to a coboundary. Similarly we have
\begin{equation}
\chi_\vp(\t) = - \chi_\tau({\rm GV}),
\end{equation}
\ie we also obtain the Godbillon-Vey class.
}\end{remark}

\subsection{The characteristic map in codimension 2}
In this subsection we exercise the machinery we developed in
Subsection \ref{subsection-H-n} for $n=2$. There is no such
computations in the literature that we know of.

\medskip

\ni We keep our conventions as before, \ie  $\Kc:= U(g\ell_2)$, $V
:= S(g\ell_2^\ast)\nsb{4}$, $\Hc:=\Kc_2$,  and $\Ac:=
C^\infty_c(F^+\Rb^2)\rtimes \G$.

\medskip

\ni Let us recall the Vey basis of the cohomology of the truncated
Weil algebra $W(g\ell_2)\nsb{4}$. To this end, we fix the following notation
 \begin{align*}
& c_1 = {\rm Tr} = R^1_1+R^2_2 \in S(g\ell_2^\ast), \qquad c_2 = R^1_2R^2_1 \in S(g\ell_2^\ast), \\
& u_1 = \t^1_1 + \t^2_2, \qquad u_2 = \t^1_1\wg\t^1_2\wg\t^2_1, \qquad \om = \t^1_1\wg\t^1_2\wg\t^2_1\wg\t^2_2.
\end{align*}
The  Vey basis \cite{Godb72}, is then introduced by
\begin{equation}
\big\{1,\, c_1^2\ot u_1,\,c_2\ot u_1,\,c_2\ot u_2,\, c_1^2 \ot
\om,\, c_2 \ot \om\big\}.
\end{equation}
Next, the Poincar\'e duality yields the 6 cocycles in the complex
$C(g\ell_2,V)$:
\begin{align*}
& \FD_{\rm P}(1) = 1 \ot Y_1^1\wg Y_1^2\wg Y_2^1\wg Y_2^2,\\
& \FD_{\rm P}(c_2 \ot u_1) =c_2 \ot \left(Y_1^2\wg Y_2^1\wg Y_2^2 - Y_1^1\wg Y_1^2\wg Y_2^1\right),\, \\
& \FD_{\rm P}(c_1^2 \ot u_1) = c_1^2 \ot \left(Y_1^2\wg Y_2^1\wg Y_2^2 - Y_1^1\wg Y_1^2\wg
Y_2^1\right) ,\\
& \FD_{\rm P}(c_2 \ot u_2)=c_2 \ot Y_2^2,\quad \FD_{\rm P}(c_1^2 \ot \om)=c_1^2,\\
&\FD_{\rm P}(c_2 \ot \om) = c_2 .
\end{align*}
We label $Y_i^j$ as
\begin{equation}
Y_1:=Y_1^1, \;\; Y_2:= Y_1^2, \;\;Y_3:=Y_2^1,\;\; Y_4:=Y_2^2.
\end{equation}

\begin{proposition}
The Hopf-cyclic cohomology $HC(\Kc,V)$ is generated by the classes
\begin{align}
\begin{split}
& [\mathscr{TF}] \in HC^4(\Kc,V) \\
& [\mathscr{GV}] :=\Big[\sum_{\s \in S_3}(-1)^\s c_1^2 \ot \Big(Y_{\s(2)}\ot Y_{\s(3)}\ot Y_{\s(4)}\\
&\hspace{4cm} - Y_{\s(1)}\ot Y_{\s(2)}\ot Y_{\s(3)}\Big)\Big]\in HC^3(\Kc,V), \\
& [\mathscr{R}_1] := \Big[\sum_{\s \in S_3}(-1)^\s c_2 \ot \Big(Y_{\s(2)}\ot Y_{\s(3)}\ot Y_{\s(4)}\\
 &\hspace{4cm} - Y_{\s(1)}\ot Y_{\s(2)}\ot Y_{\s(3)}\Big)\Big]\in HC^3(\Kc,V), \\
& [\mathscr{R}_2] := [c_2 \ot Y_4] \in HC^1(\Kc,V),\\
& [\mathscr{R}_3] := [c_1^2] \in HC^0(\Kc,V),\\
& [\mathscr{R}_4] := [c_2] \in HC^0(\Kc,V) .
\end{split}
\end{align}
\end{proposition}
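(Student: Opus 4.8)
The plan is to argue exactly as in the codimension-$1$ proposition above, using two facts already at our disposal: the quasi-isomorphism $\mu\colon C^\bullet(\Kc,V)\to C_\bullet(\Fg_0,V)$ from \eqref{aux-quasi-inverse-of-anti-symm}, which is a left inverse of the anti-symmetrization map, and the identification $HC(\Kc,V)\cong H(C(\Fg_0,V))$ of \cite[Thm.~6.2]{RangSutl-II}. Since $\FD_{\rm P}$ is an isomorphism and the Vey classes form a basis of $H(W(g\ell_2)\nsb{4})$, the six Poincar\'e duals $\FD_{\rm P}(1)$, $\FD_{\rm P}(c_2\ot u_1)$, $\FD_{\rm P}(c_1^2\ot u_1)$, $\FD_{\rm P}(c_2\ot u_2)$, $\FD_{\rm P}(c_1^2\ot\om)$, $\FD_{\rm P}(c_2\ot\om)$ computed above already constitute a basis of $H(C(\Fg_0,V))$. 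Hence it is enough to exhibit, for each of them, a cyclic cocycle of $C(\Kc,V)$ in the matching degree whose image under $\mu$ is that Poincar\'e dual; the six resulting classes then span $HC(\Kc,V)$.

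First I would dispose of the classes of degree $0$ and $1$, namely $\mathscr{R}_3=c_1^2$, $\mathscr{R}_4=c_2$ and $\mathscr{R}_2=c_2\ot Y_4$. The coefficients $c_1^2$ and $c_2$ are top-degree elements of the $2$-truncated algebra $V=S(g\ell_2^\ast)\nsb{4}$, so their Koszul $\Kc$-coaction \eqref{aux-Koszul-K-coaction} is trivial, $\nabla_K(c_1^2)=1\ot c_1^2$ and $\nabla_K(c_2)=1\ot c_2$. Thus the last coface merely appends $1\in\Kc$, and both $b(-)=0$ and the cyclicity $\Ft_q=(-1)^q\Id$ reduce, via the primitivity $\D(Y_i)=Y_i\ot 1+1\ot Y_i$ of \eqref{aux-D-Y} (together with the vanishing of the relevant coadjoint actions, e.g. $c_2\cdot Y_4=0$), to the two-line cancellations already carried out for $b(R)$ and $\Ft(R)$ in the codimension-$1$ computation. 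Applying $\mu$ returns $\FD_{\rm P}(c_1^2\ot\om)$, $\FD_{\rm P}(c_2\ot\om)$ and $\FD_{\rm P}(c_2\ot u_2)$ respectively.

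Next come the degree-$3$ classes $\mathscr{GV}$ and $\mathscr{R}_1$. Their coefficients $c_1^2,c_2$ are again top-degree, hence have trivial coaction and no Koszul correction is needed; the shape $Y_{\s(2)}\ot Y_{\s(3)}\ot Y_{\s(4)}-Y_{\s(1)}\ot Y_{\s(2)}\ot Y_{\s(3)}$ is arranged precisely so that, after anti-symmetrization, the primitive parts of the interior cofaces telescope, giving $b(-)=0$, while cyclicity $\Ft=-\Id$ follows from the same primitivity together with the trivial coaction. Indeed $b(-)=0$ amounts to $\FD_{\rm P}(c_1^2\ot u_1)$ and $\FD_{\rm P}(c_2\ot u_1)$ being Chevalley--Eilenberg cycles, which holds by construction. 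Since $\mu$ kills the symmetric redundancies and retains the anti-symmetric tensor, $\mu(\mathscr{GV})=\FD_{\rm P}(c_1^2\ot u_1)$ and $\mu(\mathscr{R}_1)=\FD_{\rm P}(c_2\ot u_1)$.

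The hard part is the top class $\mathscr{TF}\in HC^4(\Kc,V)$, whose $V$-coefficient is $1$ and therefore carries the nontrivial coaction $\nabla_K(1)=1\ot 1+Y_a\ot R^a+\tfrac12 Y_aY_b\ot R^{ab}$. As already witnessed in codimension $1$---where $\FD_{\rm P}(1)$ was represented not by $1\ot Y$ but by $1\ot Y+\tfrac12 R\ot Y^2$---the bare anti-symmetrization of $\FD_{\rm P}(1)=1\ot Y_1\wg Y_2\wg Y_3\wg Y_4$ is \emph{not} a cyclic cocycle: the coaction feeds into the last coface and into $\Ft$, and must be balanced by lower correction terms carrying $R^a$ and $R^{ab}$ coefficients, with weights dictated by $\nabla_K(1)$. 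I would therefore define $\mathscr{TF}$ as the $S_4$-anti-symmetrization of $1\ot Y_\bullet$ together with these coaction-prescribed corrections, and then check $b\,\mathscr{TF}=0$ and $\Ft\,\mathscr{TF}=\mathscr{TF}$ by hand. This is the codimension-$2$ analogue of the $\tfrac12 R\ot Y^2$ computation, but far heavier because of the $S_4$-sum and the three-term coaction, and it is the step I expect to absorb most of the effort. Once it is in place, $\mu$ projects away every correction term so that $\mu(\mathscr{TF})=\FD_{\rm P}(1)$; as $\mu$ is a quasi-isomorphism whose six outputs form a basis of $H(C(\Fg_0,V))\cong HC(\Kc,V)$, the six classes generate, which finishes the proof.
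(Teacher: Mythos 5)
Your treatment of the five lower classes ($\mathscr{GV}$, $\mathscr{R}_1,\ldots,\mathscr{R}_4$) is exactly the paper's: the paper likewise checks (declaring it straightforward) that these are Hopf-cyclic cocycles and that $\mu$ sends them to the Poincar\'e duals $\FD_{\rm P}(c_1^2\ot u_1)$, $\FD_{\rm P}(c_2\ot u_1)$, $\FD_{\rm P}(c_2\ot u_2)$, $\FD_{\rm P}(c_1^2\ot \om)$, $\FD_{\rm P}(c_2\ot \om)$, and then invokes \cite[Thm. 6.2]{RangSutl-II} (that $\mu$ is a quasi-isomorphism) to conclude generation. Your observation that the top-degree coefficients $c_1^2$, $c_2$ have trivial Koszul coaction, together with $c_2\cdot Y_4=0$, is the correct reason these verifications collapse to the codimension-1 pattern.

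Where you part ways with the paper is on $[\mathscr{TF}]$, and there your proposal has a real hole. You correctly recognize that the bare antisymmetrization $\sum_{\s\in S_4}(-1)^\s\, 1\ot Y_{\s(1)}\ot Y_{\s(2)}\ot Y_{\s(3)}\ot Y_{\s(4)}$ is not a cyclic cocycle in $C(\Kc,V)$, because the coefficient $1\in V$ carries the nontrivial coaction $\nabla_K(1)=1\ot 1+Y_a\ot R^a+\frac{1}{2}Y_aY_b\ot R^{ab}$, and you propose to complete it with $R^a$- and $R^{ab}$-weighted correction terms and then verify $b=0$ and $\Ft=\Id$ by hand. But you never write down these corrections, let alone perform the verification; you only announce that this step will absorb most of the effort. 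That unexecuted computation is precisely the hardest part of the proposition, so as written the argument is incomplete. The paper avoids it altogether: it defines $[\mathscr{TF}]$ as the class of the bare antisymmetrization regarded as a cyclic cocycle in the $E_1$ term $E_1^{2,2}(U(g\ell_2),V)$ of the spectral sequence associated to the natural filtration of $V$, where no correction terms are required, and then \cite[Thm. 6.2]{RangSutl-II} identifies the class and yields $\mu(\mathscr{TF})=\FD_{\rm P}(1)$; indeed the paper states explicitly, right after the proposition, that it does not complete the fundamental cocycle. Your route would work in principle---the quasi-isomorphism guarantees that a completion exists, and $\mu$ would kill the symmetric correction terms as in the codimension-1 case---but to turn it into a proof you must either exhibit the completed cocycle and check its cyclicity (a heavy $S_4$ computation with a three-term coaction), or replace that step by the paper's filtration argument.
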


\begin{proof}
It is straightforward to check that $\mathscr{R}_1,\ldots,\mathscr{R}_4$ and $\mathscr{GV}$ are Hopf-cyclic cocycles and that
\begin{align*}
& \mu(\mathscr{R}_1) = \FD_{\rm P}(c_2\ot u_1),\quad \mu(\mathscr{R}_2) = \FD_{\rm P}(c_2\ot u_2),\\
& \mu(\mathscr{R}_3) = \FD_{\rm P}(c_1^2\ot \om), \quad  \mu(\mathscr{R}_4) = \FD_{\rm P}(c_2\ot \om),\\
& \mu(\mathscr{GV}) = \FD_{\rm P}(c_1^2\ot u_1).
\end{align*}
On the other hand,
\begin{equation}\label{aux-TF-in-K}
[\mathscr{TF}] = \Big[\sum_{\s \in S_4}(-1)^\s1 \ot Y_{\s(1)}\ot Y_{\s(2)}\ot Y_{\s(3)}\ot Y_{\s(4)}\Big] \in E_1^{2,2}(U(g\ell_2),V)
\end{equation}
is a cyclic cocycle  in the $E_1$ level of the spectral sequence
that corresponds to the natural filtration of $V$, \cite[Thm.
6.2]{RangSutl-II}.  Hence
\begin{equation*}
\mu(\mathscr{TF}) = \FD_{\rm P}(1).
\end{equation*}
Therefore, the claim follows from \cite[Thm. 6.2]{RangSutl-II}.
\end{proof}

\ni In this paper we do not complete the fundamental  cocycle as we
know its counterpart as a cyclic cocycle over $\Ac$ by the following
argument. Let us recall the characteristic  map
\begin{equation}
\chi_\vp:C^\bullet(\Kc,V) \longrightarrow C^{\bullet +2}(\Ac)
\end{equation}
for the SAYD-twisted cyclic 2-cocycle defined by the Theorem
\ref{theorem-vp}. To this end, we first prove a generalization of
\cite[Prop. 18]{ConnMosc}. In view of   in \cite{MoscRang09},
$\Hc_n$ is realized as a bicrossed product Hopf algebra
 $\Uc {\bcl}\Fc^{\rm cop}$. Here  $\Fc$
  is the commutative algebra of regular functions on the group of diffeomorphisms which
  preserve the  origin and with identity Jacobian at the origin,
  and $\Uc=U(g\ell_2^{\rm affine})$. The coaction involed in this
  bicrossed product realization  is recalled below
\begin{align}\label{aux-the-left-F^cop-coaction-on-U}
\begin{split}
& \nb:\Uc \to \Fc\cop \ot \Uc, \\
& X_k \mapsto 1 \ot X_k + \d^i_{jk} \ot Y_i^j,\qquad  Y_i^j \mapsto
1 \ot Y_i^j.
\end{split}
\end{align}
In the following proposition, for any $1 \leq j \leq m := n^2+n$,
\begin{equation}
\nb^j(Z) = Z\ns{-j} \odots Z\ns{-1} \ot Z\ns{0} \ot 1 \odots 1 \in \Hc_n^{\ot\,m+1}.
\end{equation}

\begin{proposition}
The $m:=n^2 + n$-cochain
\begin{equation}\label{aux-transverse-fundamental-class-in-Hn}
{\rm TF} := (-1)^{(m-1)!}\, \sum_{\s \in S_m} (-1)^\s \nb^m(Z^{\s(1)}) \cdots \nb(Z^{\s(m)}) \in \Hc_n^{\ot\,m+1}
\end{equation}
is a cyclic $m$-cocycle whose class $[{\rm TF}] \in HC^m(\Hc_n)$ corresponds, by the
Connes-Moscovici characteristic map, to the transverse fundamental class
$[TF] \in HC^m(\Ac)$.
\end{proposition}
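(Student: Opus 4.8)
The plan is to prove two independent assertions: that ${\rm TF}$ of \eqref{aux-transverse-fundamental-class-in-Hn} is a cyclic $m$-cocycle in the coalgebra form $\Cb_\d\ot_{\Hc_n}\Hc_n^{\ot\,m+1}$ of the Hopf-cyclic complex, and that its image under the characteristic map $\chi_\tau$ is the transverse fundamental class $[TF]\in HC^m(\Ac)$. The structural fact I would put to work throughout is that on every generator $Z\in\{X_k,Y_i^j\}$ of $g\ell_n^{\rm affine}\subset\Uc$ the coproduct of $\Hc_n$ splits as $\D(Z)=Z\ot 1+\nb(Z)$, where $\nb:\Uc\to\Fc\cop\ot\Uc$, $\nb(Z)=Z\ns{-1}\ot Z\ns{0}\in\Fc\cop\ot\Uc\subset\Hc_n\ot\Hc_n$, is the coaction recalled above; this is exactly the content of the Leibniz rules \eqref{aux-D-Y}, \eqref{aux-D-d}, \eqref{aux-D-X}, and it gives the twisted Leibniz rule $Z(ab)=Z(a)\,b+Z\ns{-1}(a)\,Z\ns{0}(b)$ on $\Ac$. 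The iterated coactions $\nb^j$ in \eqref{aux-transverse-fundamental-class-in-Hn} are then precisely the device recording the lower-order $\d$-corrections that this rule forces.

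First I would make the cochain explicit by computing its reduced representative under the isomorphism $\Jc$ from $\Cb_\d\ot_{\Hc_n}\Hc_n^{\ot\,m+1}$ to $\Cb_\d\ot\Hc_n^{\ot\,m}$; writing $\Jc({\rm TF})=\one\ot\Theta$ with $\Theta\in\Hc_n^{\ot\,m}$, the codimension-one instance, where the two $\d_1\ot Y\ot Y$ contributions cancel and leave $\Theta=X\ot Y-Y\ot X-\d_1Y\ot Y$, is the model to keep in mind. For the Hochschild cocycle property $b\,{\rm TF}=0$ I would insert $\D(Z)=Z\ot 1+\nb(Z)$ into each coface $\Fd_i$: the ``$Z\ot 1$'' halves cancel in antisymmetrised pairs, reflecting the closedness of the top antisymmetric tensor over the Lie algebra, while the ``$\nb(Z)$'' halves are absorbed into coaction slots already present in the neighbouring summands of ${\rm TF}$, the matching being dictated by coassociativity of $\nb$.

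Cyclicity $\Ft\,{\rm TF}={\rm TF}$ is the delicate step, and the one I expect to be the main obstacle. Here I would use the cyclic operator of $C(\Hc_n,\Cb_\d)$ together with the twisted antipode $S_\d$ and the character values $\d(Y_i^j)=\d_i^j$, $\d(X_k)=\d(\d_{jk\ell_1\ldots\ell_r}^i)=0$. A single application of $\Ft$ rotates the ordered frame $Z^{\s(1)},\ldots,Z^{\s(m)}$ while $S_\d$ redistributes the coaction tails; reassembling these against the $S_m$-antisymmetrisation must return ${\rm TF}$, and it is exactly this bookkeeping --- the interplay of $S_\d$, the coaction tails and the permutation signs --- that fixes the global normalisation $(-1)^{(m-1)!}$. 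I would pin the sign down by insisting on $\Ft$-invariance and cross-checking against codimensions $1$ and $2$.

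Finally, for the geometric identification I would apply $\chi_\tau$ and exploit that the correction terms all involve the $\G$-component of $\Ac$: restricted to the commutative subalgebra $C^\infty_c(F^+)$ (the $\psi=\Id$ part of \eqref{aux-canonical-trace}), every $X_k$ and $Y_i^j$ acts as an honest vector field, the $\nb$-tails drop out, and $\chi_\tau({\rm TF})$ reduces to $\sum_{\s\in S_m}(-1)^\s\int_{F^+}f_0\,Z^{\s(1)}(f_1)\cdots Z^{\s(m)}(f_m)\,\varpi=\int_{F^+}f_0\,df_1\wedge\cdots\wedge df_m$, the de Rham fundamental cocycle of the $\GL^+(n,\Rb)$-invariant volume form $\varpi$ on $F^+$. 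Since $\chi_\tau({\rm TF})$ is already cyclic by the previous steps, this restriction identifies $[\chi_\tau({\rm TF})]$ with $[TF]$; for codimension one I would invoke \cite[Prop. 18]{ConnMosc} as the base case, the coaction $\nb$ supplying the analogous corrections uniformly in $n$. As a conceptual alternative I would recognise ${\rm TF}$ as the image of the fundamental class of $g\ell_n^{\rm affine}$ under the van Est map of \cite{MoscRang11}, which would yield closedness and cyclicity at once; I keep the direct route primary since it also produces the explicit representative required by \eqref{cup}.
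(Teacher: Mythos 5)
Your plan inverts the paper's logic, and both places where you diverge from it contain genuine gaps. The paper never verifies $b\,{\rm TF}=0$ or $\Ft\,{\rm TF}={\rm TF}$ inside the Hopf-cyclic complex at all: it starts from the \emph{known} cyclic cocycle $TF(a^0\odots a^m)=\int_{F^+\Rb^n}a^0\,da^1\cdots da^m$ on general elements $a^i=f^iU^\ast_{\psi_i}$ with $\psi_m\cdots\psi_0=\Id$, expands it using $df=X_i(f)\t^i+Y_i^j(f)\om^i_j$, the transformation rule $\psi^\ast(\om^i_j)=\g^i_{jk}(\psi)\t^k+\om^i_j$ and the cocycle identity of \cite{MoscRang09}, and reads off the coefficient of the volume form $\varpi$ as the element $H=\sum_\s(-1)^\s\nb^m(Z^{\s(1)})\cdots\nb(Z^{\s(m)})$; the prefactor $(-1)^{(m-1)!}$ is produced by reversing the order of multiplication of the one-forms so that the resulting monomials lie in the PBW basis of $\Hc_n$. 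This yields the exact identity $\chi_\tau({\rm TF})=TF$ on all of $\Ac$, and cyclicity of ${\rm TF}$ is then \emph{inherited} from that of $TF$, since $\chi_\tau=T^\natural$ is an injective morphism of cocyclic modules. Your first half --- proving the cocycle and cyclicity conditions by hand --- is precisely what this mechanism makes unnecessary, and you do not actually carry it out: you concede that cyclicity is ``the main obstacle'' and propose to fix the normalisation ``by insisting on $\Ft$-invariance,'' which is circular, since $\Ft$-invariance is the very thing to be proved.

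The second gap is the fatal one. You propose to identify $[\chi_\tau({\rm TF})]$ with $[TF]$ by restricting to the commutative subalgebra $C^\infty_c(F^+)$, where the $\nb$-tails drop out. But a cyclic cocycle on the crossed product $\Ac=C^\infty_c(F^+)\rtimes\G$ is not determined, even up to coboundary, by its values on tuples with all $\psi_i=\Id$: the trace constraint is only $\psi_m\cdots\psi_0=\Id$, and both $TF$ and $\chi_\tau({\rm TF})$ are nonzero on tuples whose individual group parts are nontrivial. Indeed, the entire content of the proposition is invisible after your restriction: the uncorrected antisymmetric tensor $\sum_\s(-1)^\s\, Z^{\s(1)}\odots Z^{\s(m)}$, with no $\d$-terms at all, also reduces to the de Rham fundamental cocycle on the function algebra, yet it is not even a Hochschild cocycle on $\Ac$. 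The coaction tails $\nb^j$ exist exactly to absorb the $\g^i_{jk}(\psi)$ corrections coming from pullbacks of the frame and connection forms along nontrivial diffeomorphisms, and matching those corrections on the full crossed product is the computation that constitutes the paper's proof; your argument skips it, so it establishes nothing about the class in $HC^m(\Ac)$.
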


\begin{proof}
Let  $a^i := f^iU^\ast_{\psi_i} \in \Ac$, where  $0 \leq i \leq m$,
$f^i\in C^\infty_c(F^+\Rb^2)$ and $\psi\in \G$. Without loss of
generality we assume that  $\psi_m\ldots\psi_0 = \Id$. The cyclic
cocycle $TF \in HC^m(\Ac)$ is given by the $m$-cocycle
\begin{align*}
& \displaystyle TF(a^0\odots a^m) = \int_{F^+\Rb^n} a^0 da^1\cdots
da^m, \qquad \qquad da^i=df^iU^\ast_{\psi_i}.
\end{align*}
We note that in order to prove the claim, we need to find suitable
$h^0, \ldots, h^m \in \Hc_n$ such that
\begin{equation}
TF(a^0\odots a^m)=\tau(h^0(a^0)\cdots h^m(a^m)).
\end{equation}
Indeed,
\begin{align}\label{aux-h^0-to-h^m}
\begin{split}
&\int_{F^+\Rb^n} a^0 da^1\cdots da^m=
\int_{F^+\Rb^n}f^0{\psi_0}^\ast(df^1)
\cdots({\psi_0}^\ast\ldots{\psi_{m-1}}^\ast)(df^m) = \\
& \int_{F^+\Rb^n}h^0(f^0){\psi_0}^\ast(h^1(f^1))
\cdots({\psi_0}^\ast\ldots
{\psi_{m-1}}^\ast)(h^m(f^m)) \varpi = \\
& \int_{F^+\Rb^n}(\Id \ot {\psi_0}^\ast \odots
 {\psi_0}^\ast\ldots{\psi_{m-1}}^\ast)
 (h^0 \odots h^m)(f^0 \odots f^m)\varpi,
\end{split}
\end{align}
where the volume form on the frame bundle is
\begin{equation}\label{aux-volume-form}
\varpi = \bigwedge_{i = 1}^n\t^i \wg \bigwedge_{1 \leq i,j \leq
n}\om^i_j
 \quad {\rm (ordered \,\, lexicographically)}
\end{equation}
In the above computation  we use the following notations.
\begin{equation*}
(h^0 \odots h^m)(f^0 \odots f^m) = h^0(f^0)\ldots h^m(f^m),
\end{equation*}
and similarly for any $g^0,\ldots,g^m \in C^\infty_c(F^+\Rb^n)$,
\begin{align*}
(\Id \ot {\psi_0}^\ast \odots
{\psi_0}^\ast\ldots{\psi_{m-1}}^\ast)&(g^0 \odots g^m)\\
&=g^0{\psi_0}^\ast(g^1)\ldots
{\psi_0}^\ast\ldots{\psi_{m-1}}^\ast(g^m).
\end{align*}
Here  $\psi^\ast(g)(x,y)= g(\psi(x), \psi'(x)\cdot y)$.
\medskip

\ni For  any $f \in C^\infty_c(F^+\Rb^n)$ we have
\begin{equation}
df = \frac{\p f}{\p x^i}dx^i + \frac{\p f}{\p y_i^j}dy_i^j =  X_i(f)\t^i + Y_i^j(f)\om^i_j.
\end{equation}
Therefore, for $\psi_0 \in \Diff(\Rb^n)$, and $f^0, f^1 \in C^\infty_c(F^+\Rb^n)$ we have
\begin{align}\label{aux-f^0-df^1-formula}
\begin{split}
& f^0{\psi_0}^\ast (df^1) =f^0{\psi_0}^\ast(X_i(f^1)){\psi_0}^\ast(\t^i) + f^0{\psi_0}^\ast(Y_i^j(f^1)){\psi_0}^\ast(\om^i_j) \\
& = f^0{\psi_0}^\ast(X_i(f^1))\t^i + f^0{\psi_0}^\ast(Y_i^j(f^1))(\g^i_{jk}(\psi_0)\t^k + \om^i_j) \\
& = (\Id \ot {\psi_0}^\ast) \big[(1 \ot X_k + \d^i_{jk} \ot Y_i^j)(f^0 \ot f^1)\t^k + (1 \ot Y_i^j)(f^0 \ot f^1)\om^i_j\big] \\
& = (\Id \ot {\psi_0}^\ast) ((X_k)\ns{-1} \ot (X_k)\ns{0})(f^0 \ot f^1)\t^k + \\
& (\Id \ot {\psi_0}^\ast)((Y_i^j)\ns{-1} \ot (Y_i^j)\ns{0})(f^0 \ot
f^1)\om^i_j.
\end{split}
\end{align}
\ni On the second equality we have used \cite[(2.16)]{ConnMosc}, and on the third equality we used \eqref{aux-action-Hn-on-A}. On the forth equality, the left coaction is \eqref{aux-the-left-F^cop-coaction-on-U}.

\medskip

\ni On the other hand we have
\begin{align}\label{aux-multiplication-f^0-df^1-to-df^m}
\begin{split}
& (-1)^{(m-1)!} \,\, f^0{\psi_0}^\ast (df^1) {\psi_0}^\ast{\psi_1}^\ast (df^2) \ldots {\psi_0}^\ast \cdots {\psi_{m-1}}^\ast (df^m)  \\
&= {\psi_0}^\ast \cdots {\psi_{m-1}}^\ast (df^m) \ldots {\psi_0}^\ast{\psi_1}^\ast (df^2) {\psi_0}^\ast (df^1) f^0 \\
& =\left({\psi_{m-1} \cdots \psi_0}^\ast(X_i(f^m))\t^i \right. +\\
& \left. {\psi_{m-1} \cdots \psi_0}^\ast(Y_i^j(f^m))(\g^i_{jk}(\psi_{m-1} \cdots \psi_1)\t^k + \om^i_j)\right) \\
& \cdots \\
& \left({\psi_1\psi_0}^\ast(X_i(f^2))\t^i + {\psi_1\psi_0}^\ast(Y_i^j(f^2))(\g^i_{jk}(\psi_2\psi_1)\t^k + \om^i_j)\right) \cdot \\
& \left( {\psi_0}^\ast(X_i(f^1))\t^i + {\psi_0}^\ast(Y_i^j(f^1))(\g^i_{jk}(\psi_1)\t^k + \om^i_j)\right) \cdot f^0  \\
& =\left(\Id \ot {\psi_0}^\ast \odots {\psi_0}^\ast\ldots{\psi_{m-1}}^\ast\right) \Big\{ \\
& \left[((X_i)\ns{-m} \odots (X_i)\ns{0})\t^i + ((Y_i^j)\ns{-m} \odots (Y_i^j)\ns{0})\om^i_j\right] \cdot \\
& \cdots\\
& \big[((X_i)\ns{-2} \ot (X_i)\ns{-1} \ot (X_i)\ns{0} \ot 1 \odots 1)\t^i + \\
 & \hspace{3cm}((Y_i^j)\ns{-2} \ot (Y_i^j)\ns{-1} \ot (Y_i^j)\ns{0} \ot 1 \odots 1)\om^i_j\big] \cdot \\
 & \left[((X_i)\ns{-1} \ot (X_i)\ns{0} \ot 1 \odots 1)\t^i\right. + \\
& \left. ((Y_i^j)\ns{-1} \ot (Y_i^j)\ns{0} \ot 1 \odots 1)\om^i_j\right] \Big\} \left(f^0 \odots f^m\right).
\end{split}
\end{align}
\ni On the third equality, we have used the cocycle identity \cite[(1.16)]{MoscRang09} in order to obtain the expressions in $\Hc_n^{\ot\,m+1}$ in the range of the coaction \eqref{aux-the-left-F^cop-coaction-on-U}. We reversed the order of the multiplication in \eqref{aux-multiplication-f^0-df^1-to-df^m} in order to avoid obtaining elements in $\Hc_n^{\ot\,m+1}$ involving $Y_i^j\d^p_{qr} \in \Hc_n$ which do not belong the PBW basis of $\Hc_n$, \cite[Prop. 3]{ConnMosc}.

\medskip

\ni The coefficient of the volume form \eqref{aux-volume-form}, which is an element $H \in \Hc_n^{\ot\,m+1}$, can now be expressed by carrying out the multiplication in \eqref{aux-multiplication-f^0-df^1-to-df^m}. Let $(Z^1, \ldots, Z^m) = (X_1,\ldots, X_n, Y_1^1, \ldots, Y_n^n)$, where the right hand side is ordered lexicographically. Then
\begin{align}\label{aux-the-element-H}
\begin{split}
& H = \sum_{\s \in S_m} (-1)^\s \nb^m(Z^{\s(1)}) \cdots \nb(Z^{\s(m)}).
\end{split}
\end{align}

\medskip

\ni Finally, as a result of \eqref{aux-h^0-to-h^m},
\eqref{aux-multiplication-f^0-df^1-to-df^m} and \eqref{aux-the-element-H} we have the element
\begin{equation}
{\rm TF} := (-1)^{(m-1)!}\, \sum_{\s \in S_m} (-1)^\s \nb^m(Z^{\s(1)}) \cdots \nb(Z^{\s(m)}) \in \Hc_n^{\ot\,m+1}
\end{equation}
such that for the Connes-Moscovici characteristic map \eqref{aux-Connes-Moscovici-charac-map} we have
\begin{equation}
\chi_\tau({\rm TF}) = TF \in C^m(\Ac).
\end{equation}
\end{proof}
\ni Let us illustrate the proposition for $n=1$. We have $(Z^1,Z^2) = (X,Y)$, $m = 1^2+1=2$ and
\begin{align}
\begin{split}
& {\rm TF} = (-1)^{1!}\sum_{\s \in S_2}(-1)^\s \nb^2(Z^{\s(1)})\nb (Z^{\s(2)}) \\
& = -\Big((X\ns{-2} \ot X\ns{-1} \ot X\ns{0})(Y\ns{-1} \ot Y\ns{0} \ot 1) - \\
& (Y\ns{-2} \ot Y\ns{-1} \ot Y\ns{0})(X\ns{-1} \ot X\ns{0} \ot 1)\Big) \\
& =  - 1 \ot Y \ot X - 1\ot \d_1Y \ot Y - \d_1 \ot Y \ot Y + 1 \ot X \ot Y + \d_1 \ot Y \ot Y \\
& = 1 \ot X \ot Y- 1 \ot Y \ot X - 1\ot \d_1Y \ot Y.
\end{split}
\end{align}
Next we recall the isomorphism
\begin{align}\label{aux-diagonal-isomorphism}
\begin{split}
& \Psi_{\bcl}:C^\bullet(\Hc, \Cb_\d) \to D^\bullet(\Uc,\Fc,\Cb_\d) \\
&\Psi_{\bcl}( u^1\acl f^1\ot \ldots\ot u^n\acl f^n) = \\
& u^1\ns{-n}f^1 \odots u^1\ns{-1}\ldots u^n\ns{-1}f^n \ot u^1\ns{0} \odots u^n\ns{0}
\end{split}
\end{align}
defined in \cite{MoscRang09} that identifies the Hopf-cyclic complex $C^\bullet(\Hc,\Cb_\d)$
of a Hopf algebra $\Hc = \Uc \bcl \Fc$ with the diagonal subcomplex
\begin{equation}
D^\bullet(\Uc,\Fc,\Cb_\d) := \Cb_\d \ot \Fc^{\ot \, \bullet} \ot \Uc^{\ot\,\bullet}.
\end{equation}
\ni On the other hand, for $\Uc = U(\Fg)$ we have the quasi-isomorphism
\begin{equation}\label{aux-mu-map}
\mu:\Cb_\d \ot \Fc^{\ot \, \bullet} \ot \Uc^{\ot\,\bullet} \to \Cb_\d \ot \Fc^{\ot \, \bullet} \ot \bigwedge^{\bullet}\Fg,
\end{equation}
which is the inverse of the antisymmetrization map on the level of cohomologies.

\begin{remark}\rm{
The transverse fundamental class $[{\rm TF}] \in HC^{n^2+n}(\Hc_n,\Cb_\d)$ defined in \eqref{aux-transverse-fundamental-class-in-Hn} corresponds to the class
\begin{equation}
[1 \ot X_1 \wdots X_n \wg Y_1^1 \wdots Y_n^n],
\end{equation}
in the total complex  $C^{\bullet, \bullet}(\Fg, \Fc, \Cb_\d)$ \cite[(3.37)]{MoscRang09}, by the composition of \eqref{aux-diagonal-isomorphism} and \eqref{aux-mu-map}.
}\end{remark}
\ni On the next move, we introduce the commutative diagram
\begin{equation}\label{aux-diagram-of-characteristic-maps}
\xymatrix {
\ar[dr]_{\chi_\vp} C_\Kc^j(\Kc,V) \ar[r]^{\wbar{\chi}_\vp} & C^{j+k}(\Hc_n,\Cb_\d) \ar[d]^{T^\natural} \\
& C^{j+k}(\Ac)
}
\end{equation}
induced by (a decomposition of) the cup product \eqref{aux-cup-product} via a cyclic cocycle  $\vp \in C^k_{\Kc}(\Ac,V)$ in the image of \eqref{aux-equivariant-map}. Here $T^\natural:\Hc_n^j \to C^j(\Ac)$ is the isomorphism
\begin{equation}
T^\natural(h^1 \odots h^j)(a^0 \odots a^j) = \tau(a^0h^1(a^1)\ldots h^j(a^j)),
\end{equation}
defined in \cite[(3.12)]{ConnMosc}, onto the space of elementary characteristic $j$-cochains, \cite[Section 3]{ConnMosc}.

\medskip

\ni We are now ready to prove our claim. On the following proposition, $\vp \in C^2_{\Kc}(\Ac,V)$ is the cyclic cocycle defined in \eqref{aux-vp'-2-quick},\eqref{aux-vp'-1-quick}, \eqref{aux-vp'-0-quick}.

\begin{proposition}
The cyclic cohomology class $[\mathscr{TF}] \in HC^{4}(\Kc,V)$ is mapped by $\chi_\vp:HC^{4}(\Kc,V) \to HC^6(\Ac_\G)$ to the transverse characteristic class $[TF] \in HC^6(\Ac_\G)$.
\end{proposition}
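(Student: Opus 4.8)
The plan is to avoid computing $\chi_\vp([\mathscr{TF}])$ directly on $\Ac$ and instead to route it through the Hopf-cyclic complex of $\Hc_2$ by means of the commutative diagram \eqref{aux-diagram-of-characteristic-maps}. By that diagram $\chi_\vp = T^\natural\circ\wbar{\chi}_\vp$, where $\wbar{\chi}_\vp$ is the map introduced in \eqref{aux-diagram-of-characteristic-maps} and $T^\natural = \chi_\tau$ is the Connes--Moscovici map. Hence $\chi_\vp([\mathscr{TF}]) = \chi_\tau\big(\wbar{\chi}_\vp([\mathscr{TF}])\big)$, and since the preceding proposition already gives $\chi_\tau([{\rm TF}]) = [TF]$ for the element ${\rm TF}$ of \eqref{aux-transverse-fundamental-class-in-Hn}, the whole assertion reduces to the single identity $\wbar{\chi}_\vp([\mathscr{TF}]) = [{\rm TF}]$ in $HC^6(\Hc_2,\Cb_\d)$; applying $\chi_\tau$ to both sides then finishes the proof.

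To establish this identity I would transport both classes to the bicrossed-product total complex $C^{\bullet,\bullet}(\Fg,\Fc,\Cb_\d)$, with $\Fg = g\ell_2^{\rm affine}$, by means of the isomorphism $\Psi_{\bcl}$ of \eqref{aux-diagonal-isomorphism} followed by the antisymmetrization quasi-isomorphism $\mu$ of \eqref{aux-mu-map}. By the Remark following the preceding proposition, $[{\rm TF}]$ corresponds there to $[1\ot X_1\wg X_2\wg Y_1^1\wg Y_1^2\wg Y_2^1\wg Y_2^2]$, the top exterior power of all generators of $g\ell_2^{\rm affine}$. On the other side, the representative $\sum_{\s\in S_4}(-1)^\s 1\ot Y_{\s(1)}\ot Y_{\s(2)}\ot Y_{\s(3)}\ot Y_{\s(4)}$ of $[\mathscr{TF}]$ maps under $\mu$ to the $g\ell_2$-fundamental class $1\ot Y_1^1\wg Y_1^2\wg Y_2^1\wg Y_2^2$, while the leading component $1^\ast\ot\one\ot X_{\s(1)}\ot X_{\s(2)}$ of the equivariant lift of $\vp$, see \eqref{aux-vp'-0-quick}, supplies the transverse factor $X_1\wg X_2$. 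Since the cup product \eqref{aux-cup-product}, after passing to the associated graded (equivalently, after applying $\mu$), becomes the exterior product $\wg^4 g\ell_2\ot\wg^2\Rb^2\to\wg^6 g\ell_2^{\rm affine}$, it sends the product of these two classes precisely to the top wedge, matching $[{\rm TF}]$.

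The main obstacle will be controlling the subleading terms. Because $X_k$ is not primitive for the coaction \eqref{aux-the-left-F^cop-coaction-on-U}, namely $\nb(X_k)=1\ot X_k+\d^i_{jk}\ot Y_i^j$, the shuffle expansion of the cup product generates, alongside the leading $X$--$Y$ wedge, a cascade of terms carrying the $\d$-generators; these are exactly the contributions packaged by the iterated coactions $\nb^j$ in \eqref{aux-transverse-fundamental-class-in-Hn}. Matching them forces the subleading components $\vp_1,\vp_2$ of $\vp$ (the $\d$-involving pieces \eqref{aux-vp'-1-quick} and \eqref{aux-vp'-2-quick}) to absorb the non-primitivity up to a cyclic coboundary in $C^\bullet(\Hc_2,\Cb_\d)$, in the same way that codimension $1$ recovered the transverse fundamental class only up to the coboundary $\frac{1}{2} b(\d_1Y^2)$. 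I expect to discharge this bookkeeping by expanding the shuffle sum term by term, repeatedly invoking the cocommutativity of $\Kc=U(g\ell_2)$ and the integration-by-parts property \eqref{aux-property-S-tilde}, and absorbing the surplus $\d$-terms into an explicit coboundary; working at the $E_1$ page of the filtration of $V$ that defines $[\mathscr{TF}]$ in \eqref{aux-TF-in-K} should render these surplus terms manifestly exact, thereby reducing $\wbar{\chi}_\vp([\mathscr{TF}])$ to the representative \eqref{aux-transverse-fundamental-class-in-Hn} of $[{\rm TF}]$.
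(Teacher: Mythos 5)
Your proposal is correct and follows essentially the same route as the paper: reduce via the commutative diagram \eqref{aux-diagram-of-characteristic-maps} to the single identity $[\wbar{\chi}_\vp(\mathscr{TF})]=[{\rm TF}]$ in $HC^6(\Hc_2,\Cb_\d)$, then compare the two classes in the bicrossed-product complex under $\mu\circ\Psi_{\bcl}$, where both become $[1 \ot X_1\wg X_2\wg Y_1^1\wdots Y_2^2]$. The only divergence is your third paragraph: the term-by-term absorption of the surplus $\d$-terms into explicit coboundaries is unnecessary, because the paper disposes of them at once by noting that they lie in the large kernel of $\mu$ and that $\mu\circ\Psi_{\bcl}$ induces an isomorphism (in particular an injection) on cohomology, so the equality of the two classes is automatic as soon as their images coincide.
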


\begin{proof}
By the diagram \eqref{aux-diagram-of-characteristic-maps} we understand that it is enough to observe $[\wbar{\chi}_\vp(\mathscr{TF})] = [{\rm TF}] \in HC^6(\Hc_n)$. This, in turn, follows from the observation
\begin{equation}
\mu\circ \psi_{\bcl}([\wbar{\chi}_\vp(\mathscr{TF})]) = \mu\circ\psi_{\bcl}([{\rm TF}]) = [1 \ot X_1 \wg X_2 \wg Y_1^1 \wdots Y_2^2],
\end{equation}
 thanks to the large kernel of \eqref{aux-mu-map}. Hence the result follows since $\mu\circ \psi_{\bcl}$ is an isomorphism on the level of cohomologies.
\end{proof}

\ni In the following we  present the image of the cyclic cocycles  $\mathscr{GV}$, $\mathscr{R}_1$, $\mathscr{R}_2$, $\mathscr{R}_3$,
$\mathscr{R}_4$ under the characteristic map  $\chi_\vp: C^\bullet(\Kc, V)\ra C^{\bullet+2}(\Ac)$. These are cyclic  cocycles in $C^\bullet(\Ac)$.  We do not display the detailed account of the  computation as it is  lengthy and straightforward.

\begin{align}
\begin{split}
& \chi_\vp(\mathscr{GV})(a_0 \odots a_5) = \sum_{k=1}^3\sum_{1 \leq i,j \leq 2}\sum_{\s,\g,\eta \in S_2}2 \cdot (-1)^\s(-1)^\g(-1)^{k-1} \\
&\Big\{-\tau\left(a_0\d^i_{i\eta(1)}(a_1)\d^j_{j\eta(2)}(a_2) Y_{\mu^k(\s(1))}^{\mu^k(\s(2))}(a_3) Y_{\mu^k(\g(1))}^{\mu^k(\s(1))}(a_4)  Y_{\mu^k(\g(2))}^{\mu^k(\s(1))}(a_5)\right) \\
& + \tau\left(a_0\d^i_{i\eta(1)}(a_1) Y_{\mu^k(\s(1))}^{\mu^k(\s(2))}(a_2) \d^j_{j\eta(2)}(a_3) Y_{\mu^k(\g(1))}^{\mu^k(\s(1))}(a_4)  Y_{\mu^k(\g(2))}^{\mu^k(\s(1))}(a_5)\right) \\
&-\tau\left(a_0\d^i_{i\eta(1)}(a_1) Y_{\mu^k(\s(1))}^{\mu^k(\s(2))}(a_2) Y_{\mu^k(\g(1))}^{\mu^k(\s(1))}(a_3)  \d^j_{j\eta(2)}(a_4)Y_{\mu^k(\g(2))}^{\mu^k(\s(1))}(a_5)\right) \\
&+\tau\left(a_0\d^i_{i\eta(1)}(a_1) Y_{\mu^k(\s(1))}^{\mu^k(\s(2))}(a_2) Y_{\mu^k(\g(1))}^{\mu^k(\s(1))}(a_3)  Y_{\mu^k(\g(2))}^{\mu^k(\s(1))}(a_4)\d^j_{j\eta(2)}(a_5)\right) \\
&-\tau\left(a_0 Y_{\mu^k(\s(1))}^{\mu^k(\s(2))}(a_1)\d^i_{i\eta(1)}(a_2)\d^j_{j\eta(2)}(a_3) Y_{\mu^k(\g(1))}^{\mu^k(\s(1))}(a_4)  Y_{\mu^k(\g(2))}^{\mu^k(\s(1))}(a_5)\right) \\
&+\tau\left(a_0 Y_{\mu^k(\s(1))}^{\mu^k(\s(2))}(a_1)\d^i_{i\eta(1)}(a_2) Y_{\mu^k(\g(1))}^{\mu^k(\s(1))}(a_3)  \d^j_{j\eta(2)}(a_4)Y_{\mu^k(\g(2))}^{\mu^k(\s(1))}(a_5)\right) \\
&-\tau\left(a_0 Y_{\mu^k(\s(1))}^{\mu^k(\s(2))}(a_1)\d^i_{i\eta(1)}(a_2) Y_{\mu^k(\g(1))}^{\mu^k(\s(1))}(a_3)  Y_{\mu^k(\g(2))}^{\mu^k(\s(1))}(a_4)\d^j_{j\eta(2)}(a_5)\right) \\
&-\tau\left(a_0 Y_{\mu^k(\s(1))}^{\mu^k(\s(2))}(a_1) Y_{\mu^k(\g(1))}^{\mu^k(\s(1))}(a_2)\d^i_{i\eta(1)}(a_3)  \d^j_{j\eta(2)}(a_4)Y_{\mu^k(\g(2))}^{\mu^k(\s(1))}(a_5)\right) \\
&+\tau\left(a_0 Y_{\mu^k(\s(1))}^{\mu^k(\s(2))}(a_1) Y_{\mu^k(\g(1))}^{\mu^k(\s(1))}(a_2)\d^i_{i\eta(1)}(a_3)  Y_{\mu^k(\g(2))}^{\mu^k(\s(1))}(a_4)\d^j_{j\eta(2)}(a_5)\right) \\
&-\tau\left(a_0 Y_{\mu^k(\s(1))}^{\mu^k(\s(2))}(a_1) Y_{\mu^k(\g(1))}^{\mu^k(\s(1))}(a_2)  Y_{\mu^k(\g(2))}^{\mu^k(\s(1))}(a_3)\d^i_{i\eta(1)}(a_4)\d^j_{j\eta(2)}(a_5)\right) \Big\}.
\end{split}
\end{align}

\begin{align}\notag
& \chi_\vp(\mathscr{R}_2)(a_0 \odots a_3)\\\notag
& = \sum_{\s \in S_2}(-1)^\s \Big\{-\tau(a_0\d^1_{2\s(1)}(a_1)\d^2_{1\s(2)}(a_2)Y_2^2(a_3)) \\\notag
&- \tau(a_0\d^2_{1\s(1)}(a_1)\d^1_{2\s(2)}(a_2)Y_2^2(a_3)) +\tau(a_0\d^1_{2\s(1)}(a_1)Y_2^2(a_2)\d^2_{1\s(2)}(a_3))\\\notag
& + \tau(a_0\d^2_{1\s(1)}(a_1)Y_2^2(a_2)\d^1_{2\s(2)}(a_3))-\tau(a_0Y_2^2(a_1)\d^1_{2\s(1)}(a_2)\d^2_{1\s(2)}(a_3)) \\
& -\tau(a_0Y_2^2(a_1)\d^2_{1\s(1)}(a_2)\d^1_{2\s(2)}(a_3))\Big\}.
\end{align}
\begin{align}
 \chi_\vp(\mathscr{R}_3)(a_0 \ot a_1 \ot a_2) = \sum_{1 \leq i,j \leq 2}\sum_{\s \in S_2}2 \cdot (-1)^\s \tau(a_0\d^i_{i\s(1)}(a_1)\d^j_{j\s(2)}(a_2)).
\end{align}

\begin{align}
\begin{split}
& \chi_\vp(\mathscr{R}_1)(a_0 \odots a_5) = \sum_{k=1}^3\sum_{\s,\g,\eta \in S_2}(-1)^\s(-1)^\g(-1)^{k-1} \Big\{\\
&-\tau\left(a_0\d^1_{2\eta(1)}(a_1)\d^2_{1\eta(2)}(a_2) Y_{\mu^k(\s(1))}^{\mu^k(\s(2))}(a_3) Y_{\mu^k(\g(1))}^{\mu^k(\s(1))}(a_4)  Y_{\mu^k(\g(2))}^{\mu^k(\s(1))}(a_5)\right) \\
&-\tau\left(a_0\d^2_{1\eta(1)}(a_1)\d^1_{2\eta(2)}(a_2) Y_{\mu^k(\s(1))}^{\mu^k(\s(2))}(a_3) Y_{\mu^k(\g(1))}^{\mu^k(\s(1))}(a_4)  Y_{\mu^k(\g(2))}^{\mu^k(\s(1))}(a_5)\right) \\
& + \tau\left(a_0\d^1_{2\eta(1)}(a_1) Y_{\mu^k(\s(1))}^{\mu^k(\s(2))}(a_2) \d^2_{1\eta(2)}(a_3) Y_{\mu^k(\g(1))}^{\mu^k(\s(1))}(a_4)  Y_{\mu^k(\g(2))}^{\mu^k(\s(1))}(a_5)\right) \\
& + \tau\left(a_0\d^2_{1\eta(1)}(a_1) Y_{\mu^k(\s(1))}^{\mu^k(\s(2))}(a_2) \d^1_{2\eta(2)}(a_3) Y_{\mu^k(\g(1))}^{\mu^k(\s(1))}(a_4)  Y_{\mu^k(\g(2))}^{\mu^k(\s(1))}(a_5)\right) \\
&-\tau\left(a_0\d^1_{2\eta(1)}(a_1) Y_{\mu^k(\s(1))}^{\mu^k(\s(2))}(a_2) Y_{\mu^k(\g(1))}^{\mu^k(\s(1))}(a_3)  \d^2_{1\eta(2)}(a_4)Y_{\mu^k(\g(2))}^{\mu^k(\s(1))}(a_5)\right) \\
&-\tau\left(a_0\d^2_{1\eta(1)}(a_1)Y_{\mu^k(\s(1))}^{\mu^k(\s(2))}(a_2) Y_{\mu^k(\g(1))}^{\mu^k(\s(1))}(a_3)  \d^1_{2\eta(2)}(a_4) Y_{\mu^k(\g(2))}^{\mu^k(\s(1))}(a_5)\right) \\
&+\tau\left(a_0\d^1_{2\eta(1)}(a_1) Y_{\mu^k(\s(1))}^{\mu^k(\s(2))}(a_2) Y_{\mu^k(\g(1))}^{\mu^k(\s(1))}(a_3)  Y_{\mu^k(\g(2))}^{\mu^k(\s(1))}(a_4)\d^2_{1\eta(2)}(a_5)\right) \\
&+\tau\left(a_0\d^2_{1\eta(1)}(a_1)Y_{\mu^k(\s(1))}^{\mu^k(\s(2))}(a_2) Y_{\mu^k(\g(1))}^{\mu^k(\s(1))}(a_3)  Y_{\mu^k(\g(2))}^{\mu^k(\s(1))}(a_4)\d^1_{2\eta(2)}(a_5)\right)\\
&-\tau\left(a_0 Y_{\mu^k(\s(1))}^{\mu^k(\s(2))}(a_1)\d^1_{2\eta(1)}(a_2)\d^2_{1\eta(2)}(a_3) Y_{\mu^k(\g(1))}^{\mu^k(\s(1))}(a_4)  Y_{\mu^k(\g(2))}^{\mu^k(\s(1))}(a_5)\right) \\
&-\tau\left(a_0Y_{\mu^k(\s(1))}^{\mu^k(\s(2))}(a_1)\d^2_{1\eta(1)}(a_2) \d^1_{2\eta(2)}(a_3)Y_{\mu^k(\g(1))}^{\mu^k(\s(1))}(a_4)  Y_{\mu^k(\g(2))}^{\mu^k(\s(1))}(a_5)\right) \\
&+\tau\left(a_0 Y_{\mu^k(\s(1))}^{\mu^k(\s(2))}(a_1)\d^1_{2\eta(1)}(a_2) Y_{\mu^k(\g(1))}^{\mu^k(\s(1))}(a_3)  \d^2_{1\eta(2)}(a_4)Y_{\mu^k(\g(2))}^{\mu^k(\s(1))}(a_5)\right) \\
&+\tau\left(a_0Y_{\mu^k(\s(1))}^{\mu^k(\s(2))}(a_1)\d^2_{1\eta(1)}(a_2) Y_{\mu^k(\g(1))}^{\mu^k(\s(1))}(a_3)  \d^1_{2\eta(2)}(a_4)Y_{\mu^k(\g(2))}^{\mu^k(\s(1))}(a_5)\right) \\
&-\tau\left(a_0 Y_{\mu^k(\s(1))}^{\mu^k(\s(2))}(a_1)\d^1_{2\eta(1)}(a_2) Y_{\mu^k(\g(1))}^{\mu^k(\s(1))}(a_3)  Y_{\mu^k(\g(2))}^{\mu^k(\s(1))}(a_4)\d^2_{1\eta(2)}(a_5)\right) \\
&-\tau\left(a_0Y_{\mu^k(\s(1))}^{\mu^k(\s(2))}(a_1)\d^2_{1\eta(1)}(a_2) Y_{\mu^k(\g(1))}^{\mu^k(\s(1))}(a_3)  Y_{\mu^k(\g(2))}^{\mu^k(\s(1))}(a_4)\d^1_{2\eta(2)}(a_5)\right) \\
&-\tau\left(a_0 Y_{\mu^k(\s(1))}^{\mu^k(\s(2))}(a_1) Y_{\mu^k(\g(1))}^{\mu^k(\s(1))}(a_2)\d^1_{2\eta(1)}(a_3)  \d^2_{1\eta(2)}(a_4)Y_{\mu^k(\g(2))}^{\mu^k(\s(1))}(a_5)\right) \\
&-\tau\left(a_0Y_{\mu^k(\s(1))}^{\mu^k(\s(2))}(a_1) Y_{\mu^k(\g(1))}^{\mu^k(\s(1))}(a_2)\d^2_{1\eta(1)}(a_3)  \d^1_{2\eta(2)}(a_4)Y_{\mu^k(\g(2))}^{\mu^k(\s(1))}(a_5)\right) \\
&+\tau\left(a_0 Y_{\mu^k(\s(1))}^{\mu^k(\s(2))}(a_1) Y_{\mu^k(\g(1))}^{\mu^k(\s(1))}(a_2)\d^1_{2\eta(1)}(a_3)  Y_{\mu^k(\g(2))}^{\mu^k(\s(1))}(a_4)\d^2_{1\eta(2)}(a_5)\right) \\
&+\tau\left(a_0Y_{\mu^k(\s(1))}^{\mu^k(\s(2))}(a_1) Y_{\mu^k(\g(1))}^{\mu^k(\s(1))}(a_2)\d^2_{1\eta(1)}(a_3)  Y_{\mu^k(\g(2))}^{\mu^k(\s(1))}(a_4)\d^1_{2\eta(2)}(a_5)\right) \\
&-\tau\left(a_0 Y_{\mu^k(\s(1))}^{\mu^k(\s(2))}(a_1) Y_{\mu^k(\g(1))}^{\mu^k(\s(1))}(a_2)  Y_{\mu^k(\g(2))}^{\mu^k(\s(1))}(a_3)\d^1_{2\eta(1)}(a_4)\d^2_{1\eta(2)}(a_5)\right) \\
&-\tau\left(a_0Y_{\mu^k(\s(1))}^{\mu^k(\s(2))}(a_1) Y_{\mu^k(\g(1))}^{\mu^k(\s(1))}(a_2)  Y_{\mu^k(\g(2))}^{\mu^k(\s(1))}(a_3)\d^2_{1\eta(1)}(a_4)\d^1_{2\eta(2)}(a_5)\right)\Big\}.
\end{split}
\end{align}
Finally,
\begin{align}
\begin{split}
& \chi_\vp(\mathscr{R}_4)(a_0 \ot a_1 \ot a_2) =\vp(c_2 \ot a_0 \ot a_1 \ot a_2)\\
&=\sum_{\s \in S_2}(-1)^\s  \Big\{\tau(a_0\d^1_{2\s(1)}(a_1)\d^2_{1\s(2)}(a_2)) + \tau(a_0\d^2_{1\s(1)}(a_1)\d^1_{2\s(2)}(a_2))\Big\}.
\end{split}
\end{align}

\begin{remark}{\rm
One knows that the characteristic map  $\chi_\tau:C^\bullet(\Hc, \Cb_\d)\ra C^\bullet(\Ac)$ is injective \cite{ConnMosc98}.  Since $\chi_\vp(\mathscr{TF})$, $\chi_\vp(\mathscr{GV}),$  $\chi_\vp(\mathscr{R}_1),$  $\chi_\vp(\mathscr{R}_2),$  $\chi_\vp(\mathscr{R}_3),$  and $\chi_\vp(\mathscr{R}_4)$,  are all in the range of $\chi_\tau$, as a byproduct  of our study in this paper,  one  calculates cyclic cocycles representing a basis for    $HP^\bullet(\Hc_2, \Cb_\d)$.  }
\end{remark}
\bibliographystyle{amsplain}
\bibliography{Rangipour-Sutlu-References}{}

\end{document}